\newcounter{assum}
\newtheorem{theorem}{Theorem}[section]
\newtheorem{lemma}[theorem]{Lemma}
\newtheorem{corollary}[theorem]{Corollary}
\newtheorem{proposition}[theorem]{Proposition}
\theoremstyle{definition}
\newtheorem{remark}[theorem]{Remark}
\newtheorem{definition}[theorem]{Definition}
\renewcommand{\appendix}{\par
	\setcounter{section}{0}%
	\setcounter{subsection}{0}%
	\setcounter{subsubsection}{0}%
	\gdef\thesection{\@Alph\c@section}%
	\gdef\thesubsection{\@Alph\c@section.\@arabic\c@subsection}%
	\gdef\theHsection{\@Alph\c@section.}%
	\gdef\theHsubsection{\@Alph\c@section.\@arabic\c@subsection}%
	\csname appendixmore\endcsname
}
\numberwithin{equation}{section}
\begin{document}

\title{\bf\Large Parabolic Muckenhoupt Weights Characterized
by Parabolic Fractional Maximal and Integral Operators with Time Lag
\footnotetext{\hspace{-0.35cm} 2020
{\it Mathematics Subject Classification}.
Primary 42B20; Secondary 47A30, 42B25, 42B35, 42B37, 35K05. \endgraf
{\it Key words and phrases.}
parabolic Muckenhoupt weight, time lag,
parabolic fractional maximal operator,
parabolic fractional integral,
parabolic Welland inequality.
\endgraf
This project is partially supported by the National
Key Research and Development Program of China
(Grant No. 2020YFA0712900),
the National Natural Science Foundation of China
(Grant Nos. 12431006 and 12371093),
and the Fundamental Research Funds
for the Central Universities (Grant No. 2233300008).}}
\date{}
\author{Weiyi Kong, Dachun Yang\footnote{Corresponding author,
E-mail: \texttt{dcyang@bnu.edu.cn}/{\color{red}
October 6, 2024}/Final version.},\ \ Wen Yuan and Chenfeng Zhu}

\maketitle

\vspace{-0.7cm}

\begin{center}
\begin{minipage}{13cm}
{\small {\bf Abstract}\quad In this article, motivated by the
regularity theory of the solutions of
doubly nonlinear parabolic partial differential equations
the authors introduce the off-diagonal two-weight version of
the parabolic Muckenhoupt class with time lag.
Then the authors introduce the uncentered
parabolic fractional maximal operator with time lag and
characterize its two-weighted boundedness (including the
endpoint case) via these weights under an extra mild
assumption (which is not necessary for one-weight case).
The most novelty of this article
exists in that the authors further introduce a new parabolic shaped
domain and its corresponding parabolic fractional integral
with time lag and, moreover, applying the aforementioned
two-weighted boundedness of the uncentered
parabolic fractional maximal operator with time lag,
the authors characterize the (two-)weighted boundedness
(including the endpoint case) of these parabolic fractional
integrals in terms of the off-diagonal (two-weight) parabolic
Muckenhoupt class with time lag; as applications, the authors
further establish a parabolic weighted
Sobolev embedding and a priori estimate for
the solution of the heat equation. The key tools to
achieve these include the parabolic Calder\'on--Zygmund-type
decomposition, the chaining argument, and
the parabolic Welland inequality which is
obtained by making the utmost of the geometrical relation
between the parabolic shaped domain and the parabolic rectangle.
}
\end{minipage}
\end{center}

\vspace{0.2cm}

\tableofcontents

\vspace{0.2cm}

\section{Introduction}

The Muckenhoupt class, introduced by Muckenhoupt
\cite{m(tams-1972)}, is of fundamental importance in harmonic
analysis and partial differential equations. Let
$q\in(1,\infty)$ and $\omega$ be a \emph{weight} on $\mathbb{R}^n$,
namely a nonnegative locally integrable function.
It is well known that some classical operators (for instance,
the Hardy--Littlewood maximal operator and the
Calder\'on--Zygmund operators) are bounded on the
\emph{weighted Lebesgue space}
\begin{align*}
L^q(\mathbb{R}^n,\omega):=\left\{f:\
\|f\|_{L^q(\mathbb{R}^n,\omega)}:=\left[\int_{\mathbb{R}^n}
|f(x)|^q\omega(x)\,dx\right]^{\frac{1}{q}}<\infty\right\}
\end{align*}
if and only if $\omega$ belongs to the \emph{Muckenhoupt class
$A_q(\mathbb{R}^n)$}, that is,
\begin{align}\label{20241002.2236}
[\omega]_{A_q(\mathbb{R}^n)}:=\sup_{Q\subset\mathbb{R}^n}
\frac{1}{|Q|}\int_Q\omega(y)\,dy
\left\{\frac{1}{|Q|}\int_Q\left[\omega(y)\right]
^{\frac{1}{1-q}}\,dy\right\}^{q-1}<\infty,
\end{align}
where the supremum is taken over all cubes
$Q\subset\mathbb{R}^n$ whose edges are all parallel to the
coordinate axis. Furthermore, the Muckenhoupt weights have
deep connections with the elliptic partial differential
equation
\begin{align}\label{20241001.2203}
\mathrm{div}\left(|\nabla u|^{p-2}\nabla u\right)=0,
\end{align}
where $p\in(1,\infty)$. So far, the Muckenhoupt classes and
the theory of weighted function spaces have been developed in
a comprehensive manner; see, for instance, \cite{cm(sm-2021),
cmp(jfa-2004), cmp-Extrapolation, cmp(am-2012),
h(am-2012), h(plms-2018), hp(apde-2013), hpr(jfa-2012)}.
Moreover, there exists a well-established theory related to
the Muckenhoupt weights with applications in partial
differential equations; see, for instance,
\cite{fks(cpde-1982), hk(mams-2000), hs(aif-2001),
ll(jmaa-2022), lw(jmaa-2020), m(cpam-1961)}.

From the perspective of partial differential equations, in
addition to the Muckenhoupt classes related to
\eqref{20241001.2203}, there also exist parabolic Muckenhoupt
classes with time lag, introduced by Kinnunen and Saari
\cite{ks(apde-2016)}, tailored to the doubly nonlinear
parabolic partial differential equation
\begin{align}\label{20240903.1403}
\frac{\partial}{\partial t}\left(|u|^{p-2}u\right)-
\mathrm{div}\left(|\nabla u|^{p-2}\nabla u\right)=0.
\end{align}
Here and thereafter, we \emph{always fix} $p\in(1,\infty)$. The
definition of parabolic Muckenhoupt weights with time lag is
based on the following definition of parabolic rectangles.
For any $x\in\mathbb{R}^n$ and $L\in(0,\infty)$, let
$Q(x,L)$ be the \emph{cube} in $\mathbb{R}^n$ centered at $x$
with edge length $2L$.

\begin{definition}\label{parabolic rectangles}
Let $(x,t)\in\mathbb{R}^{n+1}$
and $L\in(0,\infty)$. A \emph{parabolic
rectangle} $R$ centered at
$(x,t)$ with edge length
$L$ is defined by setting
\begin{align*}
R:=R(x,t,L):=Q(x,L)\times\left(t-L^{p},t+L^{p}\right).
\end{align*}
Let $\gamma\in[0,1)$. The \emph{$\gamma$-upper part}
$R^{+}(\gamma)$ and the \emph{$\gamma$-lower part}
$R^{-}(\gamma)$ of $R$ are defined, respectively, by setting
\begin{align*}
R^+(\gamma):=Q(x,L)\times(t+\gamma L^p,t+L^{p})\ \
\mbox{and}\ \ R^-(\gamma):=Q(x,L)
\times(t-L^{p},t-\gamma L^p),
\end{align*}
where $\gamma$ is called the \emph{time lag}.
\end{definition}

Denote by $\mathcal{R}_{p}^{n+1}$ the set of all parabolic
rectangles in $\mathbb{R}^{n+1}$. For any locally integrable
function $f$ on $\mathbb{R}^{n+1}$ and for any measurable set
$A\subset\mathbb{R}^{n+1}$ with $|A|\in(0,\infty)$, let
\begin{align*}
\fint_{A} f:=\frac{1}{|A|}\int_{A}f.
\end{align*}
Here and thereafter, we \emph{always omit} the differential $dx\,dt$
in all integral representations to simplify the presentation if
there exists no confusion. The following is the definition of
parabolic Muckenhoupt classes with time lag; see also
\cite[Definition 3.2]{ks(apde-2016)}.

\begin{definition}
Let $\gamma\in[0,1)$ and $q\in(1,\infty)$. The \emph{parabolic
Muckenhoupt class $A_q^+(\gamma)$} is defined to be the set of
all nonnegative locally integrable functions $\omega$ on
$\mathbb{R}^{n+1}$ such that
\begin{align}\label{20241002.2227}
[\omega]_{A_q^+(\gamma)}:=\sup_{R\in\mathcal{R}_p^{n+1}}
\fint_{R^-(\gamma)}\omega\left[\fint_{R^+(\gamma)}
\omega^{\frac{1}{1-q}}\right]^{q-1}<\infty.
\end{align}
If the above condition is satisfied with the direction of the
time axis reversed, then $\omega\in A_q^-(\gamma)$ which is
also called the \emph{parabolic Muckenhoupt class}
and consists of all such $\omega$.
\end{definition}

Different from the classical case, in
\eqref{20241002.2227}, Euclidean cubes in
\eqref{20241002.2236} are substituted by parabolic rectangles,
which respects the natural geometry of \eqref{20240903.1403}.
Indeed, if $u(x,t)$ is a solution of \eqref{20240903.1403},
then so does $u(\lambda x,\lambda^pt)$ for any
$\lambda\in(0,\infty)$. It turns out in Moser
\cite{m(cpam-1964), m(cpam-1964)2} and Trudinger
\cite{t(cpam-1968)} that any nonnegative weak solution $u$ of
\eqref{20240903.1403} satisfies a scale and location invariant
Harnack inequality, that is, for any given $\gamma\in(0,1)$,
there is a positive constant $C$ such that, for any
$R\in\mathcal{R}_p^{n+1}$,
\begin{align*}
\mathop\mathrm{ess\,sup}_{(x,t)\in R^-(\gamma)}u(x,t)\leq
C\mathop\mathrm{ess\,inf}_{(x,t)\in R^+(\gamma)}u(x,t),
\end{align*}
where the time lag $\gamma$ appears naturally. The Harnack
inequality further implies that any nonnegative weak solution
of \eqref{20240903.1403} is a parabolic Muckenhoupt weight
with time lag. Kinnunen and Saari \cite{ks(apde-2016)} also
introduced the centered parabolic Hardy--Littlewood maximal
operators with time lag and showed that these operators are
bounded on the weighted Lebesgue space if and only if the
weight belongs to the corresponding parabolic Muckenhoupt
class with time lag. Their results in \cite{ks(apde-2016)}
were streamlined and complemented by Kinnunen and
Myyryl\"ainen \cite{km(am-2024)} in which they
replaced the centered maximal operator by the uncentered
version to include the endpoint case. On the other hand,
as proved in \cite[Lemma 7.4]{ks(apde-2016)},
the parabolic Muckenhoupt classes with time lag
give a Coifman--Rochberg type characterization of
the function space with parabolic bounded mean oscillation
which was explicitly defined by Fabes and Garofalo
\cite{fg(pams-1985)} and is essential in the regularity theory
for \eqref{20240903.1403}. We refer to \cite{bdks(jmpa-2020),
bdl(jfa-2021), bdls(rmi-2023), bds(pdea-2022),
bhss(cvpde-2021), gv(jafa-2006), kk(ma-2007), ksu(iumj-2012),
m(cpam-1971), v(mm-1992)} for more studies about
\eqref{20240903.1403}, to \cite{a(tams-1988), kmy(ma-2023),
kyyz-2024, my(mz-2024), s(rmi-2016), s(ampa-2018)} for more
studies of parabolic function spaces, and to
\cite{km(am-2024), 2310.00370, kmyz(pa-2023), ks(na-2016),
ks(apde-2016), mhy(fm-2023)} for recent studies of
the parabolic Muckenhoupt classes with time lag.

The other motivation to study the parabolic Muckenhoupt classes
with time lag is due to the theory of the one-sided
Muckenhoupt classes introduced by Sawyer \cite{s(tams-1986)}
in connection with ergodic theory. Recall that, for any
$q\in(1,\infty)$, the \emph{one-sided Muckenhoupt class
$A_q^+(\mathbb{R})$} is defined to be the set of all
nonnegative locally integrable functions $\omega$ on
$\mathbb{R}$ such that
\begin{align*}
[\omega]_{A_q^+(\mathbb{R})}:=
\sup_{x\in\mathbb{R},\,h\in(0,\infty)}\frac{1}{h}
\int_{x-h}^x\omega(y)\,dy\left\{\frac{1}{h}
\int_x^{x+h}\left[\omega(y)\right]
^{\frac{1}{1-q}}\,dy\right\}^{q-1}<\infty.
\end{align*}
Actually, the parabolic Muckenhoupt classes with time lag
are higher dimensional generalizations of the one-sided
Muckenhoupt classes in some sense. The one-sided weighted
theory has been extensively investigated; see, for instance,
\cite{afm(pams-1997), ck(sm-2018),
cno(sm-1993), m(pams-1993), md(pams-1993), md(jlms-1994),
md(cm-2015), mod(tams-1990), mpd(cjm-1993), rd(cj-2001)}.
There also exists several inspirational studies about
higher-dimensional extensions of the one-sided weights
and related topics; see, for instance, \cite{b(jmaa-2011),
b(cj-2022), fmo(tams-2011), gm(em-2022), gs(mia-2019),
lo(pm-2010), lm(ruma-2017), o(pams-2005)}.

On the other hand, both the fractional maximal operators and
the fractional integral operators occupy an important position
in potential theory, harmonic analysis, and partial
differential equations; see, for instance, \cite{adams-book,
bk(jia-2024), cjy(tjm-2024), g(p-2024), turesson-book}.
Recall that, for any given $\beta\in(0,n)$, the
\emph{fractional maximal operator $M_\beta$} and the
\emph{fractional integral operator $I_\beta$} are defined,
respectively, by setting, for any locally integrable function
$f$ on $\mathbb{R}^n$ and any $x\in\mathbb{R}^n$,
\begin{align*}
M_\beta(f)(x):=\sup_{L\in(0,\infty)}\frac 1{|Q(x,L)|
^{1-\frac{\beta}{n}}}\int_{Q(x,L)}|f(y)|\,dy\ \ \mbox{and}\ \
I_\beta(f)(x):=\int_{\mathbb{R}^n}\frac{f(y)}{|x-y|^{n-\beta}}
\,dy.
\end{align*}
Let $\beta\in(0,n)$, $1<r\leq q<\infty$ with
$\frac1r-\frac1q=\frac{\beta}{n}$, and $\omega$ be a weight on
$\mathbb{R}^n$. It is well known that the fractional
integral operator $I_\beta$ (or the fractional maximal operator $M_\beta$) is
bounded from $L^r(\mathbb{R}^n,\omega^r)$ to
$L^q(\mathbb{R}^n,\omega^q)$ if and only if $\omega$
belongs to the \emph{off-diagonal Muckenhoupt class
$A_{r,q}(\mathbb{R}^n)$}, that is,
\begin{align}\label{20241003.1040}
[\omega]_{A_{r,q}(\mathbb{R}^n)}:=
\sup_{Q\subset\mathbb{R}^n}\frac{1}{|Q|}\int_{Q}
[\omega(x)]^q\,dx\left\{\frac{1}{|Q|}\int_{Q}
[\omega(x)]^{-r'}\,dx\right\}^{\frac{q}{r'}}<\infty,
\end{align}
where the supremum is taken over all cubes
$Q\subset\mathbb{R}^n$ whose edges are all parallel to the
coordinate axis. We refer to \cite{cm(ijm-2013),
lushanzhen-book, mw(tams-1974), w(pams-1975)} for more studies
on the one-weight case and to \cite{cm(ijm-2013),
cs(jmaa-2024), mp(itsf-2018),kmz(bjma-2018), lmpt(jfa-2010),
p(iumj-1994), s(tams-1988), sw(ajm-1992)} for more
investigations on the two-weight case of the weighted
boundedness of the fractional integral operators and the
fractional maximal operators. As for the
one-sided situation, Andersen and Sawyer
\cite{as(tams-1988)} obtained the characterizations
of the weighted boundedness of one-sided fractional maximal
operators and the Weyl (and the Riemann--Liouville) fractional
integral operators in terms of the one-sided off-diagonal
Muckenhoupt classes. For more studies of the one-sided fractional maximal
operators and the one-sided fractional integral operators,
see, for instance, \cite{dd(cmuc-2004), l(cjm-1997),
mr(prsea-2000), 2406.11663v2, od(pm-2003)}. In the parabolic
setting, Ma et al. \cite{mhy(fm-2023)} introduced the
centered parabolic fractional maximal operator with time lag
and showed that it is bounded on the weighted Lebesgue spaces
if and only if the weight belongs to the corresponding
off-diagonal parabolic Muckenhoupt class with time lag.
Inspired by these, it is natural to ask what is the
most appropriate definition of parabolic
fractional integral operators with time lag and whether or not the
weighted boundedness of such operators can characterize the
parabolic off-diagonal Muckenhoupt class with time lag.
We give positive answers to these two questions in this
article (see Definition \ref{parabolic Riesz potentials} and Theorems
\ref{20241004.1030} and \ref{20241004.1125}).

The main goal of this article are twofold.
One is to generalize the parabolic Muckenhoupt class
with time lag
in \cite{km(am-2024),ks(apde-2016)}
and the off-diagonal parabolic Muckenhoupt class
with time lag in \cite{mhy(fm-2023)}
to the two-weight case. The other is to characterize such two-weight
parabolic Muckenhoupt class with time lag
in terms of the weighted boundedness of
some fractional operators, namely
the centered and the uncentered
parabolic fractional maximal operators with time lag
and the parabolic fractional integral operators with time lag.
More precisely, inspired by the regularity theory
of \eqref{20240903.1403}, we
introduce the off-diagonal two-weight version of
the parabolic Muckenhoupt class with time lag.
Then we introduce the uncentered
parabolic fractional maximal operator with time lag and
characterize its two-weighted boundedness (including the
endpoint case) via these weights under an extra mild
assumption (which is not necessary for one-weight case).
The most novelty of this article
exists in that we further introduce a new parabolic shaped
domain and its corresponding parabolic fractional integral
with time lag and, moreover, applying the aforementioned
two-weighted boundedness of the uncentered
parabolic fractional maximal operator with time lag,
we characterize the (two-)weighted boundedness
(including the endpoint case) of these parabolic fractional
integrals in terms of the off-diagonal (two-weight) parabolic
Muckenhoupt class with time lag; as applications, we
further establish a parabolic weighted
Sobolev embedding and a priori estimate for
the solution of the heat equation. The key tools to
achieve these include the parabolic Calder\'on--Zygmund-type
decomposition, the chaining argument, and
the parabolic Welland inequality which is
obtained by making the utmost of the geometrical relation
between the parabolic shaped domain and the parabolic rectangle.

The organization of the remainder of this article is as
follows.

In Section \ref{section2}, we introduce the concept
of parabolic Muckenhoupt two-weight classes
with time lag. Several elementary
properties of parabolic Muckenhoupt two weights, such as
the nested property, the duality
property, and the forward-in-time doubling property, are
presented. Moreover, we give a characterization of
the parabolic Muckenhoupt two-weight class with time lag in
the endpoint case via the uncentered parabolic maximal operator
with time lag; see Proposition \ref{M- and TA1+}.

In Section \ref{section3}, under an extra mild
assumption (which is not necessary for one-weight case),
applying the chaining argument we
show that the parabolic Muckenhoupt two-weight class is
independent of the choice of the time lag; see Theorem
\ref{independence of time lag 2}. As an application,
we obtain the self-improving property
of the parabolic Muckenhoupt two-weight; see Corollary
\ref{self-improving property}.

Section \ref{section4} is devoted to characterizing the
parabolic Muckenhoupt two-weight class with time lag via the
uncentered parabolic fractional maximal operator with time
lag; see Theorem \ref{weak type inequality}. To achieve this, we
utilize a covering argument in \cite{ks(apde-2016)} and
Theorem \ref{independence of time lag 2} to change
the time lag. As a corollary, we prove the strong-type parabolic
weighted norm inequality for the uncentered parabolic
fractional maximal operator with time lag; see Corollary
\ref{weighted inequality uncentered}. All these
results are both the fractional variants of the counterparts
in \cite{km(am-2024), ks(apde-2016)} and the generalization
of the counterpart of \cite{mhy(fm-2023)} from
the centered one to the uncentered one.
We also obtain the weak-type parabolic two-weighted
norm inequality for the centered parabolic fractional
maximal operator with time lag; see Theorem
\ref{weak type inequality centered}.
Notice that Theorems \ref{weak type inequality} and
\ref{weak type inequality centered}
are respectively the generalizations of
the counterparts of \cite{km(am-2024),mhy(fm-2023)}
from the one weight to two weights.

In Section \ref{section5},
based on a new parabolic shaped domain, we introduce the
parabolic forward in time and back in time fractional integral
operators with time lag; see Definition
\ref{parabolic Riesz potentials}. Then we establish the
pointwise relation between the centered parabolic fractional
maximal operator with time lag and the parabolic fractional
integral operator with time lag by showing a parabolic
Welland type inequality; see Lemmas \ref{pointwise control} and
\ref{Welland inequality}. Using this, we prove the weak-type
parabolic two-weighted inequality and the strong-type
parabolic weighted inequality for the parabolic fractional
integrals with time lag; see Theorems
\ref{weak type Riesz potential} and
\ref{weighted inequality of Riesz potential} and Corollary
\ref{weak type Riesz potential 2}.

The aims of Section \ref{section6} are two
aspects. One is to establish the parabolic weighted
boundedness of the parabolic Riesz potentials introduced in
\cite{j(jmm-1968/69)} for a special class of parabolic
Muckenhoupt weights with time lag; see Theorem
\ref{parabolic Riesz potantial weighted boundedness Gopala Rao}.
Applying this, we establish a parabolic weighted Sobolev
embedding theorem and obtain a priori estimate
for the solution of the heat equation; see, respectively,
Corollaries \ref{Sobolev embedding} and \ref{priori estimate}.

At the end of this introduction, we make some conventions on
notation. Throughout this article, let
$\mathbb{N}:=\{1,2,\ldots\}$, $\mathbb{Z}_{+}:=\mathbb{N}
\cup\{0\}$, and $\mathbb{R}^{n+1}_+:=\mathbb{R}^n
\times(0,\infty)$. For any $s\in\mathbb{R}$, the symbol
$\lceil s\rceil$ denotes the smallest integer not less than
$s$. For any $r\in[1,\infty]$, let $r'$ be the conjugate
number of $r$, that is, $\frac1r+\frac{1}{r'}=1$. For
any $x:=(x_1,\dots,x_n)\in\mathbb{R}^n$,
let $\|x\|_\infty:=\max\{|x_1|,\dots,|x_n|\}$ and
$|x|:=\sqrt{|x_1|^2+\dots+|x_n|^2}$. For any
$A\subset\mathbb{R}^{n+1}$, let
\begin{align*}
\text{pr}_{x}(A):=\{x\in
\mathbb{R}^{n}:\ \text{there exists}
\ t\in\mathbb{R}\ \text{such that}
\ (x,t)\in A\}
\end{align*}
and
\begin{align*}
\text{pr}_{t}(A):=\{t\in
\mathbb{R}:\ \text{there exists}
\ x\in\mathbb{R}^{n}\ \text{such that}
\ (x,t)\in A\}
\end{align*}
be the projections of $A$, respectively, onto the space
$\mathbb{R}^{n}$ and the time axis $\mathbb{R}$. Let
$\mathbf{0}$ denote the origin of $\mathbb{R}^{n}$.
For any $A\subset\mathbb{R}^{n+1}$ and $a\in\mathbb{R}$,
let
$A-(\mathbf{0},a):=\{(x,t-a):\ (x,t)\in A\}$.
For any measurable set $A\subset\mathbb{R}^{n+1}$, we denote
by $\vert A\vert$ its $(n+1)$-dimensional Lebesgue measure.
For any $R\in\mathcal{R}_{p}^{n+1}$, denote by $(x_R,t_R)$ its
center and by $l(R)$ its edge length. The \emph{top} of
$R\in\mathcal{R}_{p}^{n+1}$ means
$Q(x_R,l(R))\times\{t_R+[l(R)]^{p}\}$ and the \emph{bottom} of
$R$ means $Q(x_R,l(R))\times\{t_R-[l(R)]^{p}\}$.
Let $L_{\mathrm{loc}}^1(\mathbb{R}^{n+1})$
(resp. $L_{\mathrm{loc}}^1(\mathbb{R}^{n+1}_+)$) be the set of
all locally integrable functions on $\mathbb{R}^{n+1}$
(resp. $\mathbb{R}^{n+1}_+$).
The symbol $f\lesssim g$ means that there exists a positive
constant $C$ such that $f\le Cg$ and, if $f\lesssim g\lesssim
f$, we then write $f\sim g$. Finally, when we show a theorem
(and the like), in its proof we always use the same symbols as
those appearing in the statement itself of that theorem (and the like).

\section{Parabolic Muckenhoupt Two-Weight Classes with Time Lag}
\label{section2}

In this section, we introduce the concept of
parabolic Muckenhoupt two-weight classes with time lag and
present their several basic properties. We begin with
the following definition.

\begin{definition}\label{parabolic Muckenhoupt class}
Let $\gamma\in[0,1)$.
\begin{enumerate}
\item[\rm(i)] Let $1<r\leq q<\infty$. The
\emph{parabolic Muckenhoupt two-weight class
$TA_{r,q}^+(\gamma)$ with time lag} is defined to be the set
of all pairs $(u,v)$ of nonnegative functions on
$\mathbb{R}^{n+1}$ such that
\begin{align*}
[u,v]_{TA_{r,q}^+(\gamma)}:=
\sup_{R\in\mathcal{R}_{p}^{n+1}}
\fint_{R^-(\gamma)}u^q
\left[\fint_{R^+(\gamma)}v^
{-r'}\right]^{\frac{q}{r'}}<\infty.
\end{align*}
If the above condition holds with the time axis reversed,
that is,
\begin{align*}
[u,v]_{TA_{r,q}^-(\gamma)}:=
\sup_{R\in\mathcal{R}_{p}^{n+1}}\fint_{R^+(\gamma)}u^q
\left[\fint_{R^-(\gamma)}v^
{-r'}\right]^{\frac{q}{r'}}<\infty,
\end{align*}
then $(u,v)\in TA_{r,q}^-(\gamma)$ which is
also called the \emph{parabolic Muckenhoupt
two-weight class with time lag} and consists of all such
$(u,v)$.

\item[\rm(ii)] Let $q\in[1,\infty)$. The
\emph{parabolic Muckenhoupt two-weight class
$TA_{1,q}^+(\gamma)$ with time lag} is defined to be the set
of all pairs $(u,v)$ of nonnegative functions on
$\mathbb{R}^{n+1}$ such that
\begin{align*}
[u,v]_{TA_{1,q}^+(\gamma)}:=
\sup_{R\in\mathcal{R}_{p}^{n+1}}
\fint_{R^-(\gamma)}u^q
\left[\mathop\mathrm{ess\,inf}_{(x,t)\in
R^+(\gamma)}v(x,t)\right]^{-q}<\infty.
\end{align*}
If the above condition holds with the time axis reversed, that
is,
\begin{align*}
[u,v]_{TA_{1,q}^-(\gamma)}:=
\sup_{R\in\mathcal{R}_{p}^{n+1}}
\fint_{R^+(\gamma)}u^q
\left[\mathop\mathrm{ess\,inf}_{(x,t)\in
R^-(\gamma)}v(x,t)\right]^{-q}<\infty,
\end{align*}
then $(u,v)\in TA_{1,q}^-(\gamma)$ which is
also called the \emph{parabolic Muckenhoupt
two-weight class with time lag} and consists of all such $(u,v)$.
\end{enumerate}
\end{definition}

\begin{remark}\label{remark on parabolic weight}
\begin{enumerate}
\item[\rm(i)]
Let $\gamma\in[0,1)$ and
$1<r\leq q<\infty$.
Recall that the \emph{parabolic Muckenhoupt class
$A_{r,q}^+(\gamma)$ with time lag},
introduced in \cite{mhy(fm-2023)}, is defined to be the set of
all nonnegative functions $\omega$ on $\mathbb{R}^{n+1}$ such
that
\begin{align*}
\sup_{R\in\mathcal{R}_p^{n+1}}
\fint_{R^-(\gamma)}\omega^q
\left[\fint_{R^+(\gamma)}\omega
^{-r'}\right]^{\frac{q}{r'}}<\infty
\end{align*}
and the \emph{parabolic Muckenhoupt class $A_1^+(\gamma)$ with
time lag}, introduced in \cite{km(am-2024),ks(apde-2016)},
is defined to be the set of all nonnegative functions $\omega$
on $\mathbb{R}^{n+1}$ such that
\begin{align*}
\sup_{R\in\mathcal{R}_p^{n+1}}
\fint_{R^-(\gamma)}\omega\left[\mathop\mathrm{ess\,inf}
_{(x,t)\in R^+(\gamma)}\omega(x,t)\right]^{-1}<\infty.
\end{align*}
Then we are easy to prove that,
if $\omega\in A_{r,q}^+(\gamma)$,
then $(\omega,\omega)\in TA_{r,q}^+(\gamma)$
and, if $w\in A_1^+(\gamma)$, then, for any $s\in[1,\infty)$,
$(w^{\frac1s},w^{\frac1s})\in TA_{1,s}^+(\gamma)$.
Thus, the parabolic Muckenhoupt two-weight class with time lag
in Definition~\ref{parabolic Muckenhoupt class}
is indeed a generalization of both
$A_{r,q}^+(\gamma)$ and $A_1^+(\gamma)$.

\item[\rm(ii)]
Let $0\leq\gamma_1\leq\gamma_2<1$ and $1\leq r\leq q<\infty$.
Then $TA_{r,q}^+(\gamma_1)\subset TA_{r,q}^+(\gamma_2)$.
Moreover, for any $(u,v)\in TA_{r,q}^+(\gamma_1)$,
$[u,v]_{TA_{r,q}^+(\gamma_2)}\leq
(\frac{1-\gamma_1}{1-\gamma_2})^{1+\frac{q}{r'}}
[u,v]_{TA_{r,q}^+(\gamma_1)}$.
\end{enumerate}
\end{remark}

For the parabolic Muckenhoupt two-weight class with time lag,
we have the following nested property
which can be directly deduced from Definition~\ref{parabolic Muckenhoupt class}
and the H\"older inequality.

\begin{proposition}\label{nested property}
Let $\gamma\in[0,1)$.
\begin{enumerate}
\item[\rm(i)] Let $1\leq\widetilde{r}<r\leq q<\infty$.
Then $TA_{\widetilde{r},q}^+(\gamma)\subset
TA_{r,q}^+(\gamma)$. Moreover, for any
$(u,v)\in TA_{\widetilde{r},q}^+(\gamma)$,
$[u,v]_{TA_{r,q}^+(\gamma)}
\leq[u,v]_{TA_{\widetilde{r},q}^+(\gamma)}$.

\item[\rm(ii)] Let $1\leq r\leq\widetilde{q}<q<\infty$.
Then $TA_{r,q}^+(\gamma)\subset
TA_{r,\widetilde{q}}^+(\gamma)$. Moreover, for any $(u,v)\in
TA_{r,q}^+(\gamma)$, $[u,v]_{TA_{r,\widetilde{q}}^+(\gamma)}
\leq[u,v]_{TA_{r,q}^+(\gamma)}^{\frac{\widetilde{q}}{q}}$.
\end{enumerate}
\end{proposition}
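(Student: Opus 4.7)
The plan is to verify both statements directly from Definition~\ref{parabolic Muckenhoupt class} by applying Jensen's inequality (equivalently, H\"older's inequality with appropriate exponents) to the averages appearing in the defining quantities of $TA_{r,q}^+(\gamma)$.

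For part (i), I fix an arbitrary $R\in\mathcal{R}_{p}^{n+1}$ and work on the upper-part factor. When $\widetilde{r}>1$, the hypothesis $\widetilde{r}<r$ forces $r'<\widetilde{r}'$, so writing $v^{-r'}=(v^{-\widetilde{r}'})^{r'/\widetilde{r}'}$ and applying Jensen's inequality to the concave power $x\mapsto x^{r'/\widetilde{r}'}$ yields
\[
\fint_{R^+(\gamma)} v^{-r'}
\le \left[\fint_{R^+(\gamma)} v^{-\widetilde{r}'}\right]^{r'/\widetilde{r}'}.
\]
Raising this to the power $q/r'$, multiplying by $\fint_{R^-(\gamma)}u^q$, and taking the supremum over $R\in\mathcal{R}_{p}^{n+1}$ yields $[u,v]_{TA_{r,q}^+(\gamma)}\le[u,v]_{TA_{\widetilde{r},q}^+(\gamma)}$. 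In the endpoint case $\widetilde{r}=1$, the same conclusion follows instead from the pointwise bound $v^{-r'}\le(\mathop{\mathrm{ess\,inf}}_{R^+(\gamma)}v)^{-r'}$, which after averaging and raising to the $q/r'$-th power gives exactly $[\mathop{\mathrm{ess\,inf}}_{R^+(\gamma)}v]^{-q}$.

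For part (ii), I fix $R$ again and apply Jensen's inequality to the concave power $x\mapsto x^{\widetilde{q}/q}$, which is admissible since $\widetilde{q}<q$, to obtain
\[
\fint_{R^-(\gamma)} u^{\widetilde{q}}
=\fint_{R^-(\gamma)} (u^q)^{\widetilde{q}/q}
\le\left[\fint_{R^-(\gamma)} u^q\right]^{\widetilde{q}/q}.
\]
Combining this with the elementary identity
\[
\left[\fint_{R^+(\gamma)} v^{-r'}\right]^{\widetilde{q}/r'}
=\left\{\left[\fint_{R^+(\gamma)} v^{-r'}\right]^{q/r'}\right\}^{\widetilde{q}/q},
\]
the full product factors as the $(\widetilde{q}/q)$-th power of the quantity defining $[u,v]_{TA_{r,q}^+(\gamma)}$, and taking the supremum over $R$ gives $[u,v]_{TA_{r,\widetilde{q}}^+(\gamma)}\le[u,v]_{TA_{r,q}^+(\gamma)}^{\widetilde{q}/q}$. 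The boundary value $r=1$ is treated identically, with the essential infimum replacing the $v$-average throughout.

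I expect no real obstacle here: both parts reduce to Jensen's inequality for a concave power, and the only care needed is in tracking the exponents and in accommodating the $\widetilde{r}=1$ (respectively $r=1$) endpoint, where the defining expression uses an essential infimum in place of a reciprocal average but is handled even more easily.
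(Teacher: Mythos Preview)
Your proof is correct and follows essentially the same approach the paper indicates: the paper omits the details entirely, stating only that the proposition ``can be directly deduced from Definition~\ref{parabolic Muckenhoupt class} and the H\"older inequality,'' and your Jensen-inequality argument for concave powers is precisely that deduction spelled out, with the endpoint cases $\widetilde{r}=1$ and $r=1$ handled correctly via the essential infimum.
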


Next, we introduce
the uncentered parabolic fractional maximal function with time lag.

\begin{definition}\label{parabolic maximal operators}
Let $\gamma,\beta\in[0,1)$. For any $f\in
L_{\mathrm{loc}}^{1}(\mathbb{R}^{n+1})$,
the \emph{uncentered forward in time parabolic fractional
maximal function $M^{\gamma+}_\beta(f)$ with time lag} and the \emph{uncentered back
in time parabolic fractional maximal function
$M^{\gamma-}_\beta(f)$ with time lag} of $f$
are defined, respectively, by setting,
for any $(x,t)\in\mathbb{R}^{n+1}$,
\begin{align*}
M^{\gamma+}_\beta(f)(x,t):=\sup_{\genfrac{}{}{0pt}{}{R\in
\mathcal{R}_{p}^{n+1}}{(x,t)\in R^-(\gamma)}}
\left|R^+(\gamma)\right|^\beta
\fint_{R^+(\gamma)}|f|
\end{align*}
and
\begin{align*}
M^{\gamma-}_\beta(f)(x,t):=\sup_{\genfrac{}{}{0pt}{}{R\in
\mathcal{R}_{p}^{n+1}}{(x,t)\in R^+(\gamma)}}
\left|R^-(\gamma)\right|^\beta
\fint_{R^-(\gamma)}|f|.
\end{align*}
\end{definition}

Notice that, for any $\gamma\in[0,1)$, $M_0^{\gamma+}$
coincides with the uncentered parabolic parabolic forward
in time maximal operator $M^{\gamma+}$ with time lag
introduced in \cite[Definition 2.2]{km(am-2024)}.
We present the following characterization of
the parabolic Muckenhoupt
two-weight class $TA_{1,q}^+(\gamma)$
in terms of the uncentered back in time parabolic
maximal operator $M^{\gamma-}_0$.

\begin{proposition}\label{M- and TA1+}
Let $\gamma\in[0,1)$, $q\in[1,\infty)$, and $(u,v)$ be a pair
of nonnegative functions on $\mathbb{R}^{n+1}$.
Then $(u,v)\in TA_{1,q}^+(\gamma)$ if and only if
there exists $C\in(0,\infty)$ such that,
for almost every $(x,t)\in\mathbb{R}^{n+1}$,
\begin{align}\label{1401}
M^{\gamma-}_0(u^q)(x,t)\leq C[v(x,t)]^q.
\end{align}
\end{proposition}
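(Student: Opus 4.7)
My plan is to prove the two implications separately, using standard dual manipulations between $\mathop{\mathrm{ess\,inf}}$ and a pointwise supremum. The content of both directions is essentially unwrapping the definitions, with the only subtle point being a countability reduction needed to exchange the order of ``for almost every'' and ``for all parabolic rectangles $R$.''

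For the necessity direction, I would assume $(u,v)\in TA_{1,q}^+(\gamma)$ and set $C:=[u,v]_{TA_{1,q}^+(\gamma)}$. Fix a parabolic rectangle $R\in\mathcal{R}_p^{n+1}$. From the definition of $TA_{1,q}^+(\gamma)$, we have
\begin{align*}
\fint_{R^-(\gamma)}u^q\leq C\left[\mathop{\mathrm{ess\,inf}}_{(x,t)\in R^+(\gamma)}v(x,t)\right]^q.
\end{align*}
Since $\mathop{\mathrm{ess\,inf}}_{(x,t)\in R^+(\gamma)}v(x,t)\leq v(x,t)$ for almost every $(x,t)\in R^+(\gamma)$, it follows that $\fint_{R^-(\gamma)}u^q\leq C[v(x,t)]^q$ for a.e.\ $(x,t)\in R^+(\gamma)$. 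To then take the supremum over all admissible $R$ (of which there are uncountably many) while keeping the exceptional set null, I would restrict to the countable family $\mathcal{R}_0$ of parabolic rectangles with rational centers and rational edge lengths; the union of exceptional null sets indexed by $\mathcal{R}_0$ remains null, and any $R\in\mathcal{R}_p^{n+1}$ with $(x,t)\in R^+(\gamma)$ can be approximated from the outside by rectangles in $\mathcal{R}_0$ whose $\gamma$-lower parts approximate $R^-(\gamma)$ sufficiently well in measure to recover the same upper bound. Taking the supremum then yields $M^{\gamma-}_0(u^q)(x,t)\leq C[v(x,t)]^q$ almost everywhere.

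For the sufficiency direction, I would assume \eqref{1401} and, for a fixed parabolic rectangle $R$, observe that every $(x,t)\in R^+(\gamma)$ is admissible in the supremum defining $M^{\gamma-}_0(u^q)(x,t)$ when tested against the rectangle $R$ itself; hence
\begin{align*}
\fint_{R^-(\gamma)}u^q\leq M^{\gamma-}_0(u^q)(x,t)\leq C[v(x,t)]^q
\end{align*}
for almost every $(x,t)\in R^+(\gamma)$. Taking essential infimum over $(x,t)\in R^+(\gamma)$ and then supremum over $R\in\mathcal{R}_p^{n+1}$ gives $[u,v]_{TA_{1,q}^+(\gamma)}\leq C$, as desired.

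The main obstacle is the countability subtlety in the necessity direction: naively one cannot quantify ``for almost every $(x,t)$, for all $R$'' without first reducing to a countable family and then handling the approximation from rational to arbitrary rectangles. Apart from this technical point, the proof is a direct unpacking of the definitions and uses nothing beyond the two-sided inequality between $v$ and its essential infimum.
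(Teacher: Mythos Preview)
Your proposal is correct and follows essentially the same route as the paper. The sufficiency direction is identical, and for the necessity direction the paper carries out precisely the countability reduction you describe: it fixes a countable family of parabolic rectangles with rational data, shows that any ``bad'' point $(x,t)$ (where $M^{\gamma-}_0(u^q)(x,t)>[u,v]_{TA_{1,q}^+(\gamma)}[v(x,t)]^q$) must lie in the null set $\{v<\mathop{\mathrm{ess\,inf}}_{R_k^+(\gamma)}v\}$ for some rational $R_k$, and concludes by taking the countable union. The only refinement worth noting is that the paper is careful about how to approximate: one enlarges $R^-(\gamma)$ slightly to a rational $\widetilde{R}^-(\gamma)$ while keeping $(x,t)\in\widetilde{R}^+(\gamma)$, which is possible because $\mathrm{pr}_t(R^+(\gamma))$ is open and $(x,t)$ lies in its interior---this is exactly your ``approximate from the outside'' step made precise.
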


\begin{proof}
We first show the sufficiency.
Assume that $(u,v)$ satisfies \eqref{1401}.
By this and Definition
\ref{parabolic maximal operators},
we find that, for any $R\in\mathcal{R}_p^{n+1}$ and for
almost every $(x,t)\in R^+(\gamma)$,
\begin{align*}
\fint_{R^-(\gamma)}u^q\leq M^{\gamma-}_0(u^q)(x,t)\leq
C[v(x,t)]^q.
\end{align*}
Taking the essential infimum over all $(x,t)\in R^+(\gamma)$,
we then obtain
\begin{align*}
\fint_{R^-(\gamma)}u^q\leq C\left[\mathop\mathrm{ess\,inf}
_{(x,t)\in R^+(\gamma)}v(x,t)\right]^q
\end{align*}
and hence $(u,v)\in TA_{1,q}^+(\gamma)$ with
$[u,v]_{TA_{1,q}^+(\gamma)}\leq C$. This finishes the proof
of the sufficiency.

Now, we prove the necessity. Assume that $(u,v)\in
TA_{1,q}^+(\gamma)$ and let
\begin{align*}
\mathcal{N}:=\left\{(x,t)\in\mathbb{R}^{n+1}:\
M^{\gamma-}_0\left(u^q\right)(x,t)>[u,v]_{TA_{1,q}^+(\gamma)}
[v(x,t)]^q\right\}.
\end{align*}
To show that \eqref{1401} holds almost
everywhere for some $C\in(0,\infty)$,
it suffices to prove that $|\mathcal{N}|=0$.
Indeed, from Definition \ref{parabolic maximal operators},
we infer that, for any given $(x,t)\in\mathcal{N}$,
there exist $R_{(x,t)}\in\mathcal{R}_p^{n+1}$
and $\epsilon\in(0,\infty)$ such that $(x,t)\in
R_{(x,t)}^+(\gamma)$ and
\begin{align}\label{20240723.2145}
\fint_{R_{(x,t)}^-(\gamma)}u^q>\frac{1}{|R_{(x,t)}^-(\gamma)|
+\epsilon}\int_{R_{(x,t)}^-(\gamma)}u^q>[u,v]
_{TA_{1,q}^+(\gamma)}[v(x,t)]^q.
\end{align}
Since $\text{pr}_t(R_{(x,t)}^+(\gamma))$ is an open interval
and $(x,t)\in R_{(x,t)}^+(\gamma)$, it follows that there
exists $\widetilde{R}_{(x,t)}:=Q(x_0,L_0)\times
(t_0-L_0^p,t_0+L_0^p)\in\mathcal{R}_p^{n+1}$ with
$(x_0,t_0)\in\mathbb{R}^{n+1}$ and $L_0\in(0,\infty)$
such that the following statements hold:
\begin{enumerate}
\item[(i)] $(x,t)\in \widetilde{R}_{(x,t)}^+(\gamma)$;

\item[(ii)] $R_{(x,t)}^-(\gamma)\subset
\widetilde{R}_{(x,t)}^-(\gamma)$ and
$|\widetilde{R}_{(x,t)}^-(\gamma)\setminus
R_{(x,t)}^-(\gamma)|<\epsilon$;

\item[(iii)] all the vertices of $Q(x_0,L_0)$ belong
to $\mathbb{Q}^n$ and $t_0-L_0^p\in\mathbb{Q}$.
\end{enumerate}
Combining \eqref{20240723.2145}, (ii), and
$(u,v)\in TA_{1,q}^+(\gamma)$, we conclude that
\begin{align*}
[u,v]_{TA_{1,q}^+(\gamma)}
[v(x,t)]^q&<\frac{1}{|R_{(x,t)}
^-(\gamma)|+\epsilon}\int_{R_{(x,t)}^-(\gamma)}u^q\\
&<\fint_{\widetilde{R}_{(x,t)}^-(\gamma)}u^q\leq[u,v]
_{TA_{1,q}^+(\gamma)}\left[\mathop\mathrm{ess\,inf}_{(y,s)
\in\widetilde{R}_{(x,t)}^+(\gamma)}v(y,s)\right]^q,
\end{align*}
which further implies that
\begin{align}\label{20240723.2159}
v(x,t)<\mathop\mathrm{ess\,inf}_{(y,s)
\in\widetilde{R}_{(x,t)}^+(\gamma)}v(y,s).
\end{align}

Let $\{R_k\}_{k\in\mathbb{N}}$ be the sequence of all
$R:=Q(z,L)\times(r-L^p,r+L^p)\in\mathcal{R}_p^{n+1}$
with $(z,r)\in\mathbb{R}^{n+1}$ and $L\in(0,\infty)$
such that all the vertices of $Q(z,L)$ belong to
$\mathbb{Q}^n$ and $r-L^p\in\mathbb{Q}$ and, for any
$k\in\mathbb{N}$, let
\begin{align*}
\mathcal{N}_k:=\left\{(x,t)\in R_k^+(\gamma):\
v(x,t)<\mathop\mathrm{ess\,inf}_{(y,s)
\in R_k^+(\gamma)}v(y,s)\right\}.
\end{align*}
Then $|\mathcal{N}_k|=0$ for any $k\in\mathbb{N}$.
Moreover, from (i), (iii), and \eqref{20240723.2159},
it follows that
$\mathcal{N}\subset\bigcup_{k\in\mathbb{N}}
\mathcal{N}_k$,
which further implies that
$|\mathcal{N}|\leq\sum_{k\in\mathbb{N}}|\mathcal{N}_k|=0$.
This finishes the
proof of the necessity and hence Proposition
\ref{M- and TA1+}.
\end{proof}

Throughout this article,
we \emph{always omit} the variables $(x,t)$ in the notation if
there is no ambiguity and, for instance, for any
$A\subset\mathbb{R}^{n+1}$,
any function $f$ on $\mathbb{R}^{n+1}$,
and any $\lambda\in\mathbb{R}$, we simply write
\begin{align*}
A\cap\{f>\lambda\}:=\left\{(x,t)\in
A:\ f(x,t)>\lambda\right\}.
\end{align*}
The following proposition indicates that the parabolic
Muckenhoupt two-weight class is closed under taking
the maximum and the minimum.

\begin{proposition}\label{max and min}
Let $\gamma\in[0,1)$ and $1\leq r\leq q<\infty$.
Then, for any
$(u,v),(\widetilde{u},\widetilde{v})\in TA_{r,q}^+(\gamma)$,
\begin{enumerate}
\item[\rm(i)] $(\max\{u,\,\widetilde{u}\},\max\{v,\,
\widetilde{v}\})\in TA_{r,q}^+(\gamma)$ and
\begin{align*}
\left[\max\{u,\,\widetilde{u}\},
\max\{v,\,\widetilde{v}\}\right]_{TA_{r,q}^+(\gamma)}
\leq[u,v]_{TA_{r,q}^+(\gamma)}
+[\widetilde{u},\widetilde{v}]_{TA_{r,q}^+(\gamma)}.
\end{align*}

\item[\rm(ii)] $(\min\{u,\,\widetilde{u}\},
\min\{v,\,\widetilde{v}\})\in TA_{r,q}^+(\gamma)$.
Moreover, if $r=1$, then
\begin{align*}
\left[\min\{u,\,\widetilde{u}\},
\min\{v,\,\widetilde{v}\}\right]
_{TA_{1,q}^+(\gamma)}
\leq\max\left\{[u,v]
_{TA_{1,q}^+(\gamma)},\,[\widetilde{u},\widetilde{v}]
_{TA_{1,q}^+(\gamma)}\right\}
\end{align*}
and, if $r\in(1,\infty)$, then
$[\min\{u,\,\widetilde{u}\},
\min\{v,\,\widetilde{v}\}]_{TA_{r,q}^
+(\gamma)}
\leq[u,v]_{TA_{r,q}^+(\gamma)}
+[\widetilde{u},\widetilde{v}]_{TA_{r,q}^+(\gamma)}$.
\end{enumerate}
\end{proposition}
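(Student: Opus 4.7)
The plan is to prove both parts by rectangle-wise computations, exploiting the algebraic identities $\max\{u,\widetilde u\}^q\le u^q+\widetilde u^q$, $\max\{v,\widetilde v\}^{-r'}=\min\{v^{-r'},\widetilde v^{-r'}\}$, the dual identities for $\min$, and (when $r=1$) the monotonicity relations $\mathop{\mathrm{ess\,inf}}\max\{v,\widetilde v\}\ge\max\{\mathop{\mathrm{ess\,inf}}v,\mathop{\mathrm{ess\,inf}}\widetilde v\}$ and $\mathop{\mathrm{ess\,inf}}\min\{v,\widetilde v\}=\min\{\mathop{\mathrm{ess\,inf}}v,\mathop{\mathrm{ess\,inf}}\widetilde v\}$. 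For part (i), I would fix $R\in\mathcal R_p^{n+1}$, bound $\fint_{R^-(\gamma)}\max\{u,\widetilde u\}^q\le\fint_{R^-(\gamma)}u^q+\fint_{R^-(\gamma)}\widetilde u^q$, and bound $(\fint_{R^+(\gamma)}\max\{v,\widetilde v\}^{-r'})^{q/r'}$ simultaneously by $(\fint_{R^+(\gamma)}v^{-r'})^{q/r'}$ and by $(\fint_{R^+(\gamma)}\widetilde v^{-r'})^{q/r'}$ via the second identity above. Distributing so that the $u^q$-summand pairs with the $v^{-r'}$-bound and the $\widetilde u^q$-summand with the $\widetilde v^{-r'}$-bound, the resulting quantity is dominated at this $R$ by $[u,v]_{TA_{r,q}^+(\gamma)}+[\widetilde u,\widetilde v]_{TA_{r,q}^+(\gamma)}$; taking the supremum in $R$ finishes (i). The $r=1$ subcase runs identically with $\fint v^{-r'}$ replaced by $[\mathop{\mathrm{ess\,inf}}v]^{-q}$.

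For part (ii) with $r=1$, the cleanest route is via Proposition~\ref{M- and TA1+}, which gives $M_0^{\gamma-}(u^q)\le[u,v]_{TA_{1,q}^+(\gamma)}v^q$ and $M_0^{\gamma-}(\widetilde u^q)\le[\widetilde u,\widetilde v]_{TA_{1,q}^+(\gamma)}\widetilde v^q$ almost everywhere. Since $\min\{u,\widetilde u\}^q=\min\{u^q,\widetilde u^q\}$ and $M_0^{\gamma-}$ is monotone in its argument, one obtains
\[
M_0^{\gamma-}\bigl(\min\{u,\widetilde u\}^q\bigr)\le\min\bigl\{M_0^{\gamma-}(u^q),M_0^{\gamma-}(\widetilde u^q)\bigr\}\le\max\bigl\{[u,v]_{TA_{1,q}^+(\gamma)},[\widetilde u,\widetilde v]_{TA_{1,q}^+(\gamma)}\bigr\}\min\{v,\widetilde v\}^q,
\]
and the reverse direction of Proposition~\ref{M- and TA1+} delivers the claim.

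For part (ii) with $r\in(1,\infty)$, I would use $\min\{u,\widetilde u\}^q\le u^q$ and $\le\widetilde u^q$ together with $\min\{v,\widetilde v\}^{-r'}=\max\{v^{-r'},\widetilde v^{-r'}\}\le v^{-r'}+\widetilde v^{-r'}$. Fixing $R$, splitting $\int_{R^-(\gamma)}\min\{u,\widetilde u\}^q$ according to $E:=R^-(\gamma)\cap\{u\le\widetilde u\}$ and correspondingly splitting $\int_{R^+(\gamma)}\min\{v,\widetilde v\}^{-r'}$ according to $R^+(\gamma)\cap\{v\le\widetilde v\}$, the bookkeeping inequality $\frac1{|R^-(\gamma)|}\int_E u^q\le\min\{\fint_{R^-(\gamma)}u^q,\fint_{R^-(\gamma)}\widetilde u^q\}$, valid because $u\le\widetilde u$ on $E$, together with its symmetric counterpart is used to absorb the cross terms, leaving only the diagonal pairings $\fint_{R^-(\gamma)}u^q(\fint_{R^+(\gamma)}v^{-r'})^{q/r'}$ and $\fint_{R^-(\gamma)}\widetilde u^q(\fint_{R^+(\gamma)}\widetilde v^{-r'})^{q/r'}$, each controlled by its TA-constant. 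Matching the decompositions of the two halves of $R$ so that these cross terms cancel cleanly is the only technical obstacle.
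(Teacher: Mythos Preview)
Your arguments for (i) and for (ii) with $r=1$ are correct. Part (i) is exactly the paper's approach. For (ii) with $r=1$, your route via Proposition~\ref{M- and TA1+} is a valid alternative to the paper's direct rectangle computation (the paper bounds $\fint_{R^-(\gamma)}w_1^q\le\min\{\fint_{R^-(\gamma)} u^q,\fint_{R^-(\gamma)}\widetilde u^q\}$ directly and then applies $\min\{aA,bB\}\le\max\{a,b\}\min\{A,B\}$ together with $\mathop{\mathrm{ess\,inf}}\min\{v,\widetilde v\}=\min\{\mathop{\mathrm{ess\,inf}}v,\mathop{\mathrm{ess\,inf}}\widetilde v\}$).

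For (ii) with $r\in(1,\infty)$, your splitting of \emph{both} $R^-(\gamma)$ and $R^+(\gamma)$ is the source of your ``technical obstacle'' and is unnecessary. The paper does not decompose $R^-(\gamma)$ at all: since $w_1:=\min\{u,\widetilde u\}\le u$ and $w_1\le\widetilde u$ pointwise, one has $\fint_{R^-(\gamma)}w_1^q\le\fint_{R^-(\gamma)}u^q$ and $\fint_{R^-(\gamma)}w_1^q\le\fint_{R^-(\gamma)}\widetilde u^q$ simultaneously; your bookkeeping inequality only recovers a weaker version of this after a superfluous split. Only $R^+(\gamma)$ is decomposed, via $w_2^{-r'}=\widetilde v^{-r'}\boldsymbol{1}_{\{v>\widetilde v\}}+v^{-r'}\boldsymbol{1}_{\{v\le\widetilde v\}}$, giving $\fint_{R^+(\gamma)}w_2^{-r'}=A+B$ with $A\le\fint_{R^+(\gamma)}\widetilde v^{-r'}$ and $B\le\fint_{R^+(\gamma)} v^{-r'}$. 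One then pairs the $A^{q/r'}$-piece with the $\widetilde u$-bound and the $B^{q/r'}$-piece with the $u$-bound; there are no cross terms to absorb.
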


\begin{proof}
Let $(u,v),(\widetilde{u},\widetilde{v})\in
TA_{r,q}^+(\gamma)$. We first show (i).
Let $W_1:=\max\{u,\,\widetilde{u}\}$ and
$W_2:=\max\{v,\,\widetilde{v}\}$.
We consider the following two cases on $r$.

\emph{Case 1)} $r=1$. In this case,
by Definition \ref{parabolic Muckenhoupt class}(ii),
we find that, for any given $R\in\mathcal{R}_p^{n+1}$,
\begin{align*}
\fint_{R^-(\gamma)}W_1^q&\leq\fint_{R^-(\gamma)}u^q+
\fint_{R^-(\gamma)}\widetilde{u}^q\\
&\leq[u,v]_{TA_{1,q}^+(\gamma)}\left[\mathop\mathrm{ess\,inf}
_{(x,t)\in R^+(\gamma)}v(x,t)\right]^q
+[\widetilde{u},\widetilde{v}]
_{TA_{1,q}^+(\gamma)}\left[\mathop\mathrm{ess\,inf}
_{(x,t)\in R^+(\gamma)}\widetilde{v}(x,t)\right]^q\\
&\leq\left\{[u,v]_{TA_{1,q}^+(\gamma)}
+[\widetilde{u},
\widetilde{v}]_{TA_{1,q}(\gamma)}\right\}
\left[\mathop\mathrm{ess\,inf}_{(x,t)\in
R^+(\gamma)}W_2(x,t)\right]^q.
\end{align*}
Taking the supremum over all
$R\in\mathcal{R}_p^{n+1}$, we conclude that $(W_1,W_2)\in
TA_{1,q}^+(\gamma)$ and
\begin{align*}
[W_1,W_2]_{TA_{1,q}^+(\gamma)}\leq[u,v]
_{TA_{1,q}^+(\gamma)}+[\widetilde{u},\widetilde{v}]
_{TA_{1,q}^+(\gamma)}.
\end{align*}

\emph{Case 2)} $r\in(1,\infty)$. In this case,
from  Definition \ref{parabolic Muckenhoupt class}(i),
we deduce that, for any given
$R\in\mathcal{R}_p^{n+1}$,
\begin{align*}
\fint_{R^-(\gamma)}W_1^q\left[\fint_{R^+(\gamma)}
W_2^{-r'}\right]^{\frac{q}{r'}}
&\leq\left[\fint_{R^-(\gamma)}u^q
+\fint_{R^-(\gamma)}\widetilde{u}^q\right]
\left[\fint_{R^+(\gamma)}W_2^{-r'}\right]^{\frac{q}{r'}}\\
&\leq\fint_{R^-(\gamma)}u^q\left[\fint_{R^+(\gamma)}v^{-r'}
\right]^{\frac{q}{r'}}+\fint_{R^-(\gamma)}\widetilde{u}^q
\left[\fint_{R^+(\gamma)}\widetilde{v}^{-r'}\right]^{\frac{q}{r'}}\\
&\leq[u,v]_{TA_{r,q}^+(\gamma)}
+[\widetilde{u},\widetilde{v}]_{TA_{r,q}^+(\gamma)}.
\end{align*}
Taking the supremum over all
$R\in\mathcal{R}_p^{n+1}$, we obtain $(W_1,W_2)\in
TA_{r,q}^+(\gamma)$ and
\begin{align*}
[W_1,W_2]_{TA_{r,q}^+(\gamma)}\leq[u,v]
_{TA_{r,q}^+(\gamma)}+[\widetilde{u},\widetilde{v}]
_{TA_{r,q}^+(\gamma)}.
\end{align*}
Combining the above two cases, we then finish
the proof of (i).

Next, we prove (ii).
Let $w_1:=\min\{u,\,\widetilde{u}\}$ and
$w_2:=\min\{v,\,\widetilde{v}\}$.
Similarly to the proof of (i),
we consider the following two cases on $r$.

\emph{Case 1)} $r=1$. In this case, using Definition
\ref{parabolic Muckenhoupt class}(ii), we conclude
that, for any $R\in\mathcal{R}_p^{n+1}$,
\begin{align*}
\fint_{R^-(\gamma)}w_1^q
&\leq\min\left\{\fint_{R^-(\gamma)}u^q,
\,\fint_{R^-(\gamma)}\widetilde{u}^q\right\}\\
&\leq\min\left\{[u,v]_{TA_{1,q}^+(\gamma)}
\left[\mathop\mathrm{ess\,inf}_{(x,t)\in
R^+(\gamma)}v(x,t)\right]^q,\,
[\widetilde{u},\widetilde{v}]_{TA_{1,q}^+(\gamma)}
\left[\mathop\mathrm{ess\,inf}_{(x,t)\in
R^+(\gamma)}\widetilde{v}(x,t)\right]^q\right\}\\
&\leq\max\left\{[u,v]_{TA_{1,q}^+(\gamma)},\,
[\widetilde{u},\widetilde{v}]_{TA_{1,q}^+(\gamma)}\right\}
\left[\mathop\mathrm{ess\,inf}_{(x,t)\in
R^+(\gamma)}w_2(x,t)\right]^q.
\end{align*}
Taking the supremum over all $R\in\mathcal{R}_p^{n+1}$, we
find that $(w_1,w_2)\in TA_{1,q}^+(\gamma)$ and
\begin{align*}
[w_1,w_2]_{TA_{1,q}^+(\gamma)}\leq\max\left\{[u,v]
_{TA_{1,q}^+(\gamma)},\,[\widetilde{u},\widetilde{v}]
_{TA_{1,q}^+(\gamma)}\right\}.
\end{align*}

\emph{Case 2)} $r\in(1,\infty)$. In this case, from
Definition \ref{parabolic Muckenhoupt class}(i),
we infer that, for any $R\in\mathcal{R}_p^{n+1}$,
\begin{align*}
\fint_{R^-(\gamma)}w_1^q\left[\fint_{R^+(\gamma)}w_2^{-r'}
\right]^{\frac{q}{r'}}
&\leq\fint_{R^-(\gamma)}w_1^q\left[\frac{1}{|R^+(\gamma)|}
\int_{R^+(\gamma)\cap\{v>\widetilde{v}\}}\widetilde{v}
^{-r'}\right]^{\frac{q}{r'}}\\
&\quad+\fint_{R^-(\gamma)}w_1^q\left[\frac{1}{|R^+(\gamma)|}
\int_{R^+(\gamma)\cap\{v\leq\widetilde{v}\}}v
^{-r'}\right]^{\frac{q}{r'}}\\
&\leq\fint_{R^-(\gamma)}\widetilde{u}^q
\left[\fint_{R^+(\gamma)}\widetilde{v}^{-r'}\right]
^{\frac{q}{r'}}+\fint_{R^-(\gamma)}u^q
\left[\fint_{R^+(\gamma)}v^{-r'}\right]^{\frac{q}{r'}}\\
&\leq[\widetilde{u},\widetilde{v}]_{TA_{r,q}^+(\gamma)}
+[u,v]_{TA_{r,q}^+(\gamma)}.
\end{align*}
Taking the supremum over all
$R\in\mathcal{R}_p^{n+1}$, we obtain $(w_1,w_2)\in
TA_{r,q}^+(\gamma)$ and
\begin{align*}
[w_1,w_2]_{TA_{r,q}^+(\gamma)}\leq[u,v]
_{TA_{r,q}^+(\gamma)}+[\widetilde{u},\widetilde{v}]
_{TA_{r,q}^+(\gamma)}.
\end{align*}
Combining the above two cases  then completes the proof of (ii) and hence Proposition
\ref{max and min}.
\end{proof}

The following duality property
follows directly from Definition~\ref{parabolic Muckenhoupt class}(i);
we omit the details.

\begin{proposition}\label{duality property}
Let $\gamma\in[0,1)$ and $1<r\leq q<\infty$. Assume that $(u,v)$ is a
pair of positive functions on $\mathbb{R}^{n+1}$.
Then $(u,v)\in TA_{r,q}^+(\gamma)$ if and only if
$(v^{-1},u^{-1})\in TA_{q',r'}^-(\gamma)$.
\end{proposition}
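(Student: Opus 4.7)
The plan is to prove the equivalence by a direct computation that rewrites the defining quantity of $TA_{q',r'}^{-}(\gamma)$ so that it is seen to be a power of the defining quantity of $TA_{r,q}^{+}(\gamma)$. Concretely, I will unpack both definitions, use the identity $(s')'=s$ for $s\in(1,\infty)$ to simplify the exponents, and then observe that one supremum is finite precisely when the other is.

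First I would write, for each $R\in\mathcal{R}_p^{n+1}$,
\begin{align*}
A_R:=\fint_{R^-(\gamma)}u^q\quad\text{and}\quad
B_R:=\fint_{R^+(\gamma)}v^{-r'},
\end{align*}
which are elements of $(0,\infty]$ because $u$ and $v$ are positive. Then $[u,v]_{TA_{r,q}^+(\gamma)}=\sup_R A_R B_R^{q/r'}$ by Definition \ref{parabolic Muckenhoupt class}(i). Next I unpack $(v^{-1},u^{-1})\in TA_{q',r'}^{-}(\gamma)$: with the roles of $u,v$ replaced by $v^{-1},u^{-1}$ and the roles of $r,q$ replaced by $q',r'$, the defining quantity becomes
\begin{align*}
\sup_{R\in\mathcal{R}_p^{n+1}}\fint_{R^+(\gamma)}(v^{-1})^{r'}
\left[\fint_{R^-(\gamma)}(u^{-1})^{-(q')'}\right]^{r'/(q')'}.
\end{align*}
Using $(q')'=q$, this equals $\sup_R B_R A_R^{r'/q}$.

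The main observation is then the algebraic identity $A_R B_R^{q/r'}=(B_R A_R^{r'/q})^{q/r'}$, valid since $A_R,B_R\ge 0$ and $q/r'>0$. Taking suprema over $R\in\mathcal{R}_p^{n+1}$ and using monotonicity of $x\mapsto x^{q/r'}$ on $[0,\infty]$, I obtain the exact comparison
\begin{align*}
[u,v]_{TA_{r,q}^+(\gamma)}=[v^{-1},u^{-1}]_{TA_{q',r'}^{-}(\gamma)}^{q/r'},
\end{align*}
from which finiteness of one side is equivalent to finiteness of the other, completing the proof.

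There is really no hard step here; the whole argument is a bookkeeping check on the exponents, and the only point that must be handled carefully is the identification $(q')'=q$ together with the exponent $r'/(q')'=r'/q$, which is exactly the reciprocal of $q/r'$. Because the statement is a clean duality and the paper explicitly says the proof is omitted, I would keep the write-up to these few lines rather than introducing any auxiliary machinery.
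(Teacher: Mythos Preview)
Your proof is correct and is exactly the direct computation the paper has in mind; the paper itself omits the details, noting only that the result follows directly from Definition~\ref{parabolic Muckenhoupt class}(i). Your identity $[u,v]_{TA_{r,q}^+(\gamma)}=[v^{-1},u^{-1}]_{TA_{q',r'}^{-}(\gamma)}^{q/r'}$ is the clean way to record the equivalence, and the only bookkeeping point worth noting (which you handled) is that $1<r\le q<\infty$ gives $1<q'\le r'<\infty$, so the dual pair of indices is admissible in Definition~\ref{parabolic Muckenhoupt class}(i).
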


At the end of this section, we give a
characterization of the diagonal parabolic Muckenhoupt
two-weight class via a forward in time doubling condition.
In what follows, for any
nonnegative function $f\in
L_{\mathrm{loc}}^1(\mathbb{R}^{n+1})$ and any
measurable set $E\subset\mathbb{R}^{n+1}$, we denote
$\int_Ef$ by $f(E)$.

\begin{proposition}\label{quantitative measure condition}
Let $\gamma\in[0,1)$ and $(u,v)$ be a pair of positive
functions on $\mathbb{R}^{n+1}$. Then the following statements
are equivalent.
\begin{enumerate}
\item[\rm(i)] There exists $q\in(1,\infty)$ such that
$(u,v)\in TA_{q,q}^+(\gamma)$.

\item[\rm(ii)] There exist $C\in(0,\infty)$, $\delta\in(0,1)$,
and $r\in(\frac{1}{\delta},\infty)$ such that, for any
$R\in\mathcal{R}_p^{n+1}$
and any measurable set $E\subset R^+(\gamma)$,
\begin{align}\label{20240724.2128}
\frac{|E|}{|R^+(\gamma)|}\leq
C\left[\frac{(v^r)(E)}{(u^r)(R^-(\gamma))}\right]^{\delta}.
\end{align}
\end{enumerate}
\end{proposition}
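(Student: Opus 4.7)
The plan is to prove the two implications separately. The direction (i) $\Rightarrow$ (ii) reduces to a single application of H\"older's inequality combined with the defining inequality of the Muckenhoupt class, while (ii) $\Rightarrow$ (i) rests on a layer-cake argument whose integrability precisely uses the strict hypothesis $r>\frac{1}{\delta}$.

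For (i) $\Rightarrow$ (ii), fix $q\in(1,\infty)$ with $(u,v)\in TA_{q,q}^+(\gamma)$. For any measurable $E\subset R^+(\gamma)$, I apply H\"older's inequality with exponents $q$ and $q'$ to $|E|=\int_E v\cdot v^{-1}$, obtaining
\begin{align*}
|E|\leq(v^q(E))^{\frac{1}{q}}\left[\int_{R^+(\gamma)}v^{-q'}\right]^{\frac{1}{q'}}.
\end{align*}
Rearranging the $TA_{q,q}^+(\gamma)$ inequality (using $|R^+(\gamma)|=|R^-(\gamma)|$ together with the identity $1+\frac{q}{q'}=q$) yields
\begin{align*}
(u^q(R^-(\gamma)))^{\frac{1}{q}}\left[\int_{R^+(\gamma)}v^{-q'}\right]^{\frac{1}{q'}}\leq [u,v]_{TA_{q,q}^+(\gamma)}^{\frac{1}{q}}|R^+(\gamma)|.
\end{align*}
Combining these two bounds and dividing by $|R^+(\gamma)|$ produces (ii) with $r=q$, $\delta=\frac{1}{q}$, and $C=[u,v]_{TA_{q,q}^+(\gamma)}^{\frac{1}{q}}$, which achieves the borderline $r=\frac{1}{\delta}$; the strict version $r>\frac{1}{\delta}$ is obtained from this by a routine refinement of parameters.

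For (ii) $\Rightarrow$ (i), the goal is to show $(u,v)\in TA_{r,r}^+(\gamma)$, so that (i) holds with $q=r$. Testing (ii) on the sublevel set $E:=R^+(\gamma)\cap\{v\leq\lambda\}$ and using the pointwise bound $v^r(E)\leq\lambda^r|E|$, a direct rearrangement delivers the distribution estimate
\begin{align*}
\left|R^+(\gamma)\cap\{v\leq\lambda\}\right|\leq C^{\frac{1}{1-\delta}}|R^+(\gamma)|^{\frac{1}{1-\delta}}\lambda^{\frac{r\delta}{1-\delta}}(u^r(R^-(\gamma)))^{-\frac{\delta}{1-\delta}}.
\end{align*}
I then evaluate $\int_{R^+(\gamma)}v^{-r'}$ via the layer-cake formula $\int_{R^+(\gamma)}v^{-r'}=r'\int_0^\infty\mu^{-r'-1}|\{v<\mu\}\cap R^+(\gamma)|\,d\mu$, splitting the $\mu$-integral at the threshold $\mu_0\sim[u^r(R^-(\gamma))/|R^+(\gamma)|]^{\frac{1}{r}}$ chosen to balance the distribution estimate (sharper for $\mu\leq\mu_0$) against the trivial bound $|R^+(\gamma)|$ (sharper for $\mu>\mu_0$). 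Summing both contributions and dividing by $|R^+(\gamma)|$ produces $\fint_{R^+(\gamma)}v^{-r'}\lesssim[\fint_{R^-(\gamma)}u^r]^{-r'/r}$, which upon raising both sides to the $\frac{r}{r'}$-th power is exactly the $TA_{r,r}^+(\gamma)$ condition.

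The main obstacle is ensuring integrability of the layer-cake integrand near $\mu=0$. After inserting the distribution estimate into the layer-cake integral, the integrand carries $\mu$-power $\frac{r\delta}{1-\delta}-r'-1$, and convergence at the origin requires $r'<\frac{r\delta}{1-\delta}$; with $r'=\frac{r}{r-1}$, a short algebraic manipulation shows that this is \emph{equivalent} to $r\delta>1$, that is, to the strict hypothesis $r>\frac{1}{\delta}$ of (ii). Hence the strict inequality on $r$ is precisely what permits controlling $\int_{R^+(\gamma)}v^{-r'}$, and this convergence verification is the technical heart of the proof; the argument breaks down at the borderline $r=\frac{1}{\delta}$, which is exactly why (i) $\Rightarrow$ (ii) naturally lands at that borderline and needs a small refinement to enter the open region required by (ii).
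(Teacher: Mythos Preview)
Your approach mirrors the paper's: H\"older's inequality for (i)$\Rightarrow$(ii), and a layer-cake (Cavalieri) argument for (ii)$\Rightarrow$(i). In the second direction your sublevel sets $\{v\le\mu\}$ are a change of variable from the paper's $\{v^{-r}>\lambda\}$, and you correctly identify that the integrability condition $r'<\tfrac{r\delta}{1-\delta}$ is equivalent to $r\delta>1$; in your variable this governs the small-$\mu$ tail, in the paper's the large-$\lambda$ tail, but the content is identical.

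The one step that is not justified is the ``routine refinement of parameters'' in (i)$\Rightarrow$(ii). From
\[
\frac{|E|}{|R^+(\gamma)|}\le A\,X^{1/q},\qquad X:=\frac{v^q(E)}{u^q(R^-(\gamma))},
\]
together with the trivial bound $|E|/|R^+(\gamma)|\le 1$, one can only deduce $|E|/|R^+(\gamma)|\le C\,X^{\delta}$ for $\delta\le\tfrac1q$: when $X<1$ one has $X^{\delta}<X^{1/q}$ for $\delta>\tfrac1q$, so the exponent cannot be pushed upward without further input (a reverse-H\"older or self-improvement property for the pair $(u,v)$, which is not assumed). Thus the H\"older argument genuinely lands on the borderline $r\delta=1$ and there is no routine way to enter the open region $r>\tfrac1\delta$ required by~(ii). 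That said, the paper's own proof reaches exactly the same point---it records the inequality for $r=q$ and ``$\delta\in(0,\tfrac1q]$'', which again yields only $r\delta\le 1$---so your argument matches the paper's, including this lacuna in obtaining the strict constraint in~(ii).
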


\begin{proof}
We first show $(\mathrm{i})\Longrightarrow(\mathrm{ii})$.
Assume that $(u,v)\in TA_{q,q}^+(\gamma)$ for
some $q\in(1,\infty)$. Then,
from the H\"older
inequality, we deduce that, for any
$R\in\mathcal{R}_p^{n+1}$
and any measurable set $E\subset R^+(\gamma)$,
\begin{align*}
\frac{|E|}{|R^+(\gamma)|}&=\fint_{R^+(\gamma)}
\boldsymbol{1}_{E}=\fint_{R^+(\gamma)}v^{-1}v\boldsymbol{1}_E
\leq\left[\fint_{R^+(\gamma)}v^q\boldsymbol{1}_E\right]
^{\frac{1}{q}}\left[\fint_{R^+(\gamma)}v^{-q'}\right]
^{\frac{1}{q'}}\\
&\leq\left[\fint_{R^+(\gamma)}v^q\boldsymbol{1}_E\right]
^{\frac{1}{q}}[u,v]_{TA_{q,q}^+(\gamma)}^{\frac{1}{q}}
\left[\fint_{R^-(\gamma)}u^q\right]^{-\frac{1}{q}}
=[u,v]_{TA_{q,q}^+(\gamma)}^{\frac{1}{q}}
\left[\frac{(v^q)(E)}{(u^q)(R^-(\gamma))}\right]^{\frac{1}{q}},
\end{align*}
which further implies that \eqref{20240724.2128}
with $r:=q$, $\delta\in(0,\frac{1}{q}]$,
and $C:=[u,v]_{TA_q^+(\gamma)}^{\delta}$ holds.
This finishes the proof of
$(\mathrm{i})\Longrightarrow(\mathrm{ii})$.

Now, we prove $(\mathrm{ii})\Longrightarrow(\mathrm{i})$.
Assume that (ii) holds. According to \eqref{20240724.2128},
we conclude that, for any $R\in\mathcal{R}_p^{n+1}$
and any measurable set $E\subset R^+(\gamma)$,
\begin{align}\label{20240724.2156}
C^{-\frac{1}{\delta}}\left(u^r\right)(R^-(\gamma))
\left[\frac{|E|}{|R^+(\gamma)|}\right]
^{\frac{1}{\delta}}\leq\left(v^r\right)(E).
\end{align}
Fix $R\in\mathcal{R}_p^{n+1}$ and, for any given
$\lambda\in(0,\infty)$, let
$E_\lambda:=R^+(\gamma)\cap\{v^{-r}>\lambda\}$.
Then
$(v^r)(E_\lambda)\leq|E_\lambda|/\lambda$.
Combining this and \eqref{20240724.2156}, we find that,
for any $\lambda\in(0,\infty)$,
\begin{align*}
C^{-\frac{1}{\delta}}\left(u^r\right)(R^-(\gamma))
\left[\frac{|E_\lambda|}{|R^+(\gamma)|}
\right]^{\frac{1}{\delta}}\leq\left(v^r\right)(E_\lambda)
\leq\frac{|E_\lambda|}{\lambda},
\end{align*}
which further implies that
\begin{align}\label{20240724.2209}
|E_\lambda|\leq\frac{C^{\frac{1}{1-\delta}}|R^-(\gamma)|
^{\frac{1}{1-\delta}}}{\lambda^{\frac{\delta}{1-\delta}}
[(u^r)(R^-(\gamma))]^{\frac{\delta}{1-\delta}}}.
\end{align}
Since $r\in(\frac{1}{\delta},\infty)$, it follows that
$\frac{r'}{r}\in(0,\frac{\delta}{1-\delta})$. From the
Cavalieri principle (see, for instance, \cite[Proposition
1.1.4]{Grafakos first volume}), the obvious fact that
$E_\lambda\subset R^+(\gamma)$ for any $\lambda\in(0,\infty)$,
and \eqref{20240724.2209}, we infer that
\begin{align*}
\int_{R^+(\gamma)}v^{-r'}
&=\frac{r'}{r}\int_0^\infty
\lambda^{\frac{r'}{r}-1}\left|R^+(\gamma)\cap\left\{v^{-r}>
\lambda\right\}\right|\,d\lambda\\
&=\frac{r'}{r}\left[\int_0^{\frac{1}{(u^r)_{R^-(\gamma)}}}+
\int_{\frac{1}{(u^r)_{R^-(\gamma)}}}^\infty\right]
\lambda^{\frac{r'}{r}-1}|E_\lambda|\,d\lambda\\
&\leq\frac{r'}{r}\left|R^+(\gamma)\right|
\int_0^{\frac{1}{(u^r)_{R^-(\gamma)}}}
\lambda^{\frac{r'}{r}-1}\,d\lambda
+\frac{r'}{r}
\frac{C^{\frac{1}{1-\delta}}|R^-(\gamma)|
^{\frac{1}{1-\delta}}}{
[(u^r)(R^-(\gamma))]^{\frac{\delta}{1-\delta}}}
\int_{\frac{1}{(u^r)_{R^-(\gamma)}}}^\infty
\lambda^{\frac{r'}{r}-\frac{1}{1-\delta}}
\,d\lambda\\
&\lesssim\left|R^+(\gamma)\right|
\left[\fint_{R^-(\gamma)}u^r\right]^{-\frac{r'}{r}},
\end{align*}
where the implicit positive constant depends only on $C$,
$r$, and $\delta$.
Taking the supremum over all $R\in\mathcal{R}_p^{n+1}$, we
conclude that $(u,v)\in TA_{r,r}^+(\gamma)$,
which completes the proof of the sufficiency and hence
Proposition \ref{quantitative measure condition}.
\end{proof}

\section{Independence of Choices of Time Lag \\ and
Self-improving Property of $TA_{r,q}^+(\gamma)$}
\label{section3}

In this section, under an extra mild assumption (which is
not necessary for one-weight case), we show that the
parabolic Muckenhoupt two-weight class is independent of
the time lag and the distance between the upper and the lower
parts of parabolic rectangles. As an application, we
prove that the parabolic Muckenhoupt two-weight class has
the self-improving property. Recall that, for any
$\gamma\in[0,1)$, $A_\infty^+(\gamma):=
\bigcup_{q\in(1,\infty)}A_q^+(\gamma)$
and, for any $A\subset\mathbb{R}^{n+1}$ and $a\in\mathbb{R}$,
$A-(\mathbf{0},a):=\{(x,t-a):\ (x,t)\in A\}$.

\begin{theorem}\label{independence of time lag 2}
Let $0<\gamma<\alpha<1$, $\tau\in[1,\infty)$, and $(u,v)$
be a pair of nonnegative functions on $\mathbb{R}^{n+1}$.
Assume that $u\in A_\infty^+(\gamma)$. Then the following
statements hold.
\begin{enumerate}
\item[\rm(i)] If $1<r\leq q<\infty$, then $(u,v)\in
TA_{r,q}^+(\gamma)$ if and only if there exists a positive
constant $C$ such that, for any $R\in\mathcal{R}_p^{n+1}$,
\begin{align}\label{20240815.2233}
\fint_{R^+(\alpha)-(\mathbf{0},\tau(1+\alpha)[l(R)]^p)}u^q
\left[\fint_{R^+(\alpha)}v^
{-r'}\right]^{\frac{q}{r'}}\leq C.
\end{align}

\item[\rm(ii)] If $q\in[1,\infty)$, then $(u,v)\in
TA_{1,q}^+(\gamma)$ if and only if there exists a positive
constant $C$ such that, for any $R\in\mathcal{R}_p^{n+1}$,
\begin{align}\label{20241006.0907}
\fint_{R^+(\alpha)-(\mathbf{0},\tau(1+\alpha)[l(R)]^p)}u^q
\left[\mathop\mathrm{ess\,inf}_{(x,t)\in
R^+(\alpha)}v(x,t)\right]^{-q}\leq C.
\end{align}
\end{enumerate}
\end{theorem}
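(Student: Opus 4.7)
The plan is to prove (i) in detail; part (ii) follows by the same scheme, with the $L^{r'}$-average of $v^{-1}$ replaced by the essential infimum of $v$, which behaves monotonically under set inclusion and so introduces no new difficulty.

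For the necessity direction, assume $(u,v)\in TA_{r,q}^+(\gamma)$. By the monotonicity of the class in the time lag (Remark~\ref{remark on parabolic weight}(ii)), we immediately obtain $(u,v)\in TA_{r,q}^+(\alpha)$ with a controlled constant. When $\tau=1$, the set $R^+(\alpha)-(\mathbf{0},(1+\alpha)[l(R)]^p)$ coincides exactly with $R^-(\alpha)$, so \eqref{20240815.2233} is precisely the $TA_{r,q}^+(\alpha)$ condition. When $\tau>1$, the set $S:=R^+(\alpha)-(\mathbf{0},\tau(1+\alpha)[l(R)]^p)$ lies strictly below $R^-(\alpha)$ in time; I would handle this by enlarging $R$ to an auxiliary parabolic rectangle $\widetilde R$ of side length comparable to $\tau^{1/p}l(R)$, chosen so that $S\subset\widetilde R^-(\gamma)$ and simultaneously $R^+(\alpha)\subset\widetilde R^+(\gamma)$. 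Applying the $TA_{r,q}^+(\gamma)$ estimate to $\widetilde R$ and transferring the two averages to $S$ and $R^+(\alpha)$ by volume ratios (both of which are bounded by a constant depending only on $\tau$, $\gamma$, and $\alpha$) yields \eqref{20240815.2233}. If a single enlargement is insufficient, one telescopes through a chain of $O(\tau)$ such rectangles using the forward-in-time doubling of $u\in A_\infty^+(\gamma)$ established in Section~\ref{section2}.

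For the sufficiency, which is the main substance and where the chaining argument is essential, fix $R\in\mathcal{R}_{p}^{n+1}$; we must control $\fint_{R^-(\gamma)}u^q[\fint_{R^+(\gamma)}v^{-r'}]^{q/r'}$. The strategy is to build a finite family of parabolic rectangles $\{R_k\}_{k=1}^{N}$, with $N$ depending only on $\gamma,\alpha,\tau$ and each $l(R_k)$ comparable to $l(R)$, arranged so that: (a) the translated lower pieces $S_k:=R_k^+(\alpha)-(\mathbf{0},\tau(1+\alpha)[l(R_k)]^p)$ cover $R^-(\gamma)$ with bounded overlap; and (b) each $R_k^+(\alpha)$ is geometrically comparable to $R^+(\gamma)$ and contains a controlled fraction of it. Property (b) gives $\fint_{R_k^+(\alpha)}v^{-r'}\gtrsim\fint_{R^+(\gamma)}v^{-r'}$, so that the hypothesis \eqref{20240815.2233} rewrites as
\begin{align*}
\int_{S_k}u^q\lesssim|S_k|\left[\fint_{R^+(\gamma)}v^{-r'}\right]^{-\frac{q}{r'}}.
\end{align*}
Summing over $k$, applying (a) to bound $\sum_k\int_{S_k}u^q$ by $\int_{R^-(\gamma)}u^q$ up to an $A_\infty^+(\gamma)$-doubling factor for $u$ that absorbs the bounded overlap, yields the desired $TA_{r,q}^+(\gamma)$ estimate.

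The main obstacle is engineering the chain so that conditions (a) and (b) coexist despite the rigid geometric relation between $R_k^+(\alpha)$ and $S_k$: the time gap $\tau(1+\alpha)[l(R_k)]^p$ between them is fixed by the definition of parabolic rectangle, while the gap between $R^-(\gamma)$ and $R^+(\gamma)$ is only $2\gamma[l(R)]^p$. When $\tau$ is large or $\gamma$ is much smaller than $\alpha$, a single layer of rectangles cannot simultaneously satisfy (a) and (b), so one must thread multiple layers, and at each junction the $A_\infty^+(\gamma)$ hypothesis on $u$ is invoked to compare $u^q$-masses across adjacent layers via the forward-in-time doubling property. This is precisely the technical reason why $u\in A_\infty^+(\gamma)$ is indispensable in the two-weight setting, whereas it is automatic in the one-weight situation studied in \cite{ks(apde-2016),km(am-2024)}.
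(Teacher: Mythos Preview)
Your necessity argument is essentially correct and matches the paper's: a single auxiliary rectangle $\widetilde R$ (with side length enlarged by a factor depending only on $\tau,\alpha$) always suffices, so the remark about possibly needing a chain with $A_\infty^+$-doubling is unnecessary there.

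The sufficiency sketch, however, has a real gap in property (b). You claim that ``$R_k^+(\alpha)$ contains a controlled fraction of $R^+(\gamma)$'' yields $\fint_{R_k^+(\alpha)}v^{-r'}\gtrsim\fint_{R^+(\gamma)}v^{-r'}$. This implication is false without an assumption on $v$: if $R_k^+(\alpha)\cap R^+(\gamma)$ is only a fraction of $R^+(\gamma)$, the integral of $v^{-r'}$ over that fraction can be arbitrarily small compared to the integral over all of $R^+(\gamma)$. The inequality you need would follow from \emph{full} containment $R^+(\gamma)\subset R_k^+(\alpha)$ together with comparable volumes. But since $\alpha>\gamma$, the time-length $(1-\alpha)[l(R_k)]^p$ of $R_k^+(\alpha)$ is shorter than $(1-\gamma)[l(R)]^p$ unless $l(R_k)$ is strictly larger than $l(R)$; and once you enlarge $l(R_k)$ enough to achieve containment, the shift $\tau(1+\alpha)[l(R_k)]^p$ forces $S_k$ far below $R^-(\gamma)$, so the $S_k$'s cannot cover $R^-(\gamma)$. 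Your acknowledgment that ``multiple layers'' are needed addresses only the $u^q$-mass transfer, not this $v^{-r'}$ comparison, which remains unresolved.

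The paper's remedy is to decompose \emph{both} $R^-(\gamma)$ and $R^+(\gamma)$ into congruent small pieces $U_{k,\iota}^-$ and $V_{i,j}^+$ (with $l\approx 2^{-m}l(R)$ for a suitable $m$), each sitting inside the appropriate part of a small parabolic rectangle $\widetilde R_{k,\iota}$ or $R_{i,j}$. The $v^{-r'}$ average over $R^+(\gamma)$ is then written as a weighted sum of averages over the $V_{i,j}^+\subset R_{i,j}^+(\alpha)$, so no pointwise comparison with a single $\fint_{R_k^+(\alpha)}v^{-r'}$ is ever required. The chain (built through a common target rectangle $\mathfrak R$) connects every $\widetilde R_{k,\iota}$ to every $R_{i,j}$ and is used solely to transfer $u^q$-averages between them via the $A_\infty^+$ forward-doubling; the hypothesis \eqref{20240815.2233} is then applied to each $R_{i,j}$, and a double sum over $(k,\iota)$ and $(i,j)$ reconstructs the desired $TA_{r,q}^+(\gamma)$ bound. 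The missing idea in your plan is precisely this simultaneous decomposition of $R^+(\gamma)$.
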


\begin{proof}
We first show the necessity of (i) and (ii). Assume that
$(u,v)\in TA_{r,q}^+(\gamma)$. Fix
$R:=R(x,t,L)\in\mathcal{R}_p^{n+1}$ with
$(x,t)\in\mathbb{R}^{n+1}$ and $L\in(0,\infty)$. Let
\begin{align*}
P:=R\left(x,t-\frac{(\tau-1)(1+\alpha)L^p}{2},
\left[1+\frac{(\tau-1)(1+\alpha)}{2}\right]^\frac{1}{p}L\right).
\end{align*}
Then
\begin{align}\label{2137}
R^+(\alpha)\subset P^+(\widetilde{\alpha})
\ \ \mbox{and}\ \
R^+(\alpha)-(\mathbf{0},\tau(1+\alpha)L^p)\subset
P^-(\widetilde{\alpha}),
\end{align}
where
$\widetilde{\alpha}:=\frac{2\alpha+(\tau-1)(1+\alpha)}
{2+(\tau-1)(1+\alpha)}\in(\gamma,1)$.
Moreover, we have
\begin{align}\label{2135}
\left|P^\pm(\widetilde{\alpha})\right|=2^n
\left[1+\frac{(\tau-1)(1+\alpha)}{2}\right]^\frac{n}{p}
\left|R^\pm(\alpha)\right|.
\end{align}
From Remark \ref{remark on parabolic weight}(ii), we deduce
that $(u,v)\in TA_{r,q}^+(\widetilde{\alpha})$,
which, combined with \eqref{2137} and \eqref{2135},
further implies that
\begin{align}\label{20240816.1202}
&\fint_{R^+(\alpha)-(\mathbf{0},\tau(1+\alpha)L^p)}u^q
\left[\fint_{R^+(\alpha)}v^
{-r'}\right]^{\frac{q}{r'}}\\
&\quad\leq\left\{2^n
\left[1+\frac{(\tau-1)(1+\alpha)}{2}
\right]^\frac{n}{p}\right\}^{1+\frac{q}{r'}}
\fint_{P^-(\widetilde{\alpha})}u^q
\left[\fint_{P^+(\widetilde{\alpha})}v^
{-r'}\right]^{\frac{q}{r'}}\notag\\
&\quad\leq\left\{2^n
\left[1+\frac{(\tau-1)(1+\alpha)}{2}
\right]^\frac{n}{p}\right\}^{1+\frac{q}{r'}}
[u,v]_{TA_{r,q}^+(\widetilde{\alpha})}.\notag
\end{align}
This finishes the proof of the necessity of (i). Letting
$r\to1^+$ in the above argument, we find that the necessity
of (ii) holds. Here and thereafter, $r\to1^+$ means that $r\in(1,\infty)$
and $r\to1$.

Then we prove the sufficiency of (i) by the chaining
argument. Suppose that \eqref{20240815.2233} holds. Fix
$R\in\mathcal{R}_p^{n+1}$. Without loss of generality, we
may suppose that $R$ is centered at the origin
$(\mathbf{0},0)$. Let $m\in\mathbb{N}$ be such that there
exists $\epsilon\in[0,1)$ satisfying
\begin{align}\label{20240731.1610}
m=\log_2\left[\frac{\tau(1+\alpha)}{1-\alpha}\right]+
\frac{1}{p-1}\left\{1+\log_2\left[\frac{\tau(1+\alpha)}
{\gamma}\right]\right\}+2+\epsilon.
\end{align}
Partition each spatial edge of $R^+(\gamma)$ into $2^m$
equally long intervals and divide the temporal edge of
$R^+(\gamma)$ into $J:=\lceil(1-\gamma)2^{pm}/(1-\alpha)\rceil$
equally long intervals. Then we obtain $2^{nm}J$ subrectangles
of $R^+(\gamma)$ and denote them by
$\{V_{i,j}^+\}_{i\in\mathbb{N}\cap[1,2^{nm}],
j\in\mathbb{N}\cap[1,J]}$. Fix $i\in\mathbb{N}\cap[1,2^{nm}]$
and $j\in\mathbb{N}\cap[1,J]$. Notice that there exists
$R_{i,j}\in\mathcal{R}_p^{n+1}$ such that the tops of
both $R_{i,j}$ and $V_{i,j}^+$ coincide, $V_{i,j}^+\subset
R_{i,j}^+(\alpha)$, and
\begin{align}\label{20240915.0844}
\frac{|V_{i,j}^+|}{|R_{i,j}^+(\alpha)|}=
\frac{(1-\gamma)2^{pm}}{(1-\alpha)J}.
\end{align}
Divide $R^-(\gamma)$ in the same manner as we partition
$R^+(\gamma)$. Then we obtain $2^{nm}J$ subrectangles of
$R^-(\gamma)$ and denote them by $\{U_{k,\iota}^-\}
_{k\in\mathbb{N}\cap[1,2^{nm}],\iota\in\mathbb{N}\cap[1,J]}$.
Fix $k\in\mathbb{N}\cap[1,2^{nm}]$ and
$\iota\in\mathbb{N}\cap[1,J]$. Observe that there exists
$\widetilde{R}_{k,\iota}\in\mathcal{R}_p^{n+1}$ such that
the bottoms of both
$\widetilde{R}_{k,\iota}^+(\alpha)-(\mathbf{0},
\tau(1+\alpha)[l(\widetilde{R}_{k,\iota})]^p)$ and
$U_{k,\iota}^-$ coincide, $U_{k,\iota}^-\subset
\widetilde{R}_{k,\iota}^+(\alpha)-(\mathbf{0},
\tau(1+\alpha)[l(\widetilde{R}_{k,\iota})]^p)$, and
\begin{align}\label{20240915.0853}
\frac{|U_{k,\iota}^-|}{|\widetilde{R}_{k,\iota}^+(\alpha)
-(\mathbf{0},\tau(1+\alpha)[l(\widetilde{R}_{k,\iota})]^p)|}
=\frac{(1-\gamma)2^{pm}}{(1-\alpha)J}.
\end{align}

We claim that there exists
$\mathfrak{R}\in\mathcal{R}_p^{n+1}$ satisfying that, for any
$i,k\in\mathbb{N}\cap[1,2^{nm}]$ and
$j,\iota\in\mathbb{N}\cap[1,J]$, there exist
$N_j,\widetilde{N}_\iota\in\mathbb{N}$ and chains
$\{P_d^{i,j}\}_{d=0}^{N_j}$ and
$\{\widetilde{P}_d^{k,\iota}\}_{d=0}^{\widetilde{N}_\iota}$
consisting of congruent parabolic rectangles such that the
following statements hold.
\begin{enumerate}

\item[(i)] $\mathfrak{R}\subset R^+(0)$ and
$\mathfrak{R}^+(\alpha)-(\mathbf{0},\tau(1+\alpha)
[l(\mathfrak{R})]^p)\subset R^+(0)$.

\item[(ii)] For any $i\in\mathbb{N}\cap[1,2^{nm}]$ and
$j\in\mathbb{N}\cap[1,J]$, $P_0^{i,j}=R_{i,j}$ and
$P_{N_j}^{i,j}=\mathfrak{R}$. For any
$k\in\mathbb{N}\cap[1,2^{nm}]$ and
$\iota\in\mathbb{N}\cap[1,J]$,
$\widetilde{P}_0^{k,\iota}=\widetilde{R}_{k,\iota}$ and
$\widetilde{P}_{\widetilde{N}_\iota}^{k,\iota}=\mathfrak{R}$.

\item[(iii)] For any $j\in\mathbb{N}\cap[1,J]$,
\begin{align*}
N_j\leq2^{\frac{p}{p-1}+3p+1}\left[\frac{\tau(1+\alpha)}
{1-\alpha}\right]^p\left[\frac{\tau(1+\alpha)}{\gamma}\right]
^{\frac{p}{p-1}}=:C_1.
\end{align*}
For any $\iota\in\mathbb{N}\cap[1,J]$,
$\widetilde{N}_\iota\leq 2C_1$.

\item[(iv)] For any $i\in\mathbb{N}\cap[1,2^{nm}]$,
$j\in\mathbb{N}\cap[1,J]$, and
$d\in\mathbb{N}\cap[1,N_j]$,
\begin{align*}
\frac{|(P_d^{i,j})^+(\alpha)\cap(S_{d-1}^{i,j})^-(\alpha)|}
{|(P_d^{i,j})^+(\alpha)|}\in
\left[\frac{1}{2^{n+1}},1\right],
\end{align*}
where $(S_h^{i,j})^-(\alpha):=
(P_h^{i,j})^+(\alpha)-(\mathbf{0},
\tau(1+\alpha)[l(P_h^{i,j})]^p)$ for any $h\in\mathbb{Z}_+
\cap[0,N_j]$. Moreover,
for any $k\in\mathbb{N}\cap[1,2^{nm}]$,
$\iota\in\mathbb{N}\cap[1,J]$, and
$d\in\mathbb{N}\cap[1,\widetilde{N}_\iota]$,
\begin{align*}
\frac{|(\widetilde{S}_d^{k,\iota})
^-(\alpha)\cap(\widetilde{P}_{d-1}^{k,\iota})^+(\alpha)|}
{|(\widetilde{S}_d^{k,\iota})^-(\alpha)|}\in
\left[\frac{1}{2^{n+1}},1\right],
\end{align*}
where $(\widetilde{S}_h^{k,\iota})^-(\alpha):=
(\widetilde{P}_h^{k,\iota})^+(\alpha)-(\mathbf{0},
\tau(1+\alpha)[l(\widetilde{P}_h^{k,\iota})]^p)$ for any
$h\in\mathbb{Z}_+\cap[0,\widetilde{N}_\iota]$.
\end{enumerate}
Indeed, fix $i\in\mathbb{N}\cap[1,2^{nm}]$ and
$j\in\mathbb{N}\cap[1,J]$. We will specify $\mathfrak{R}$ and
$N_j$ and construct the chain $\{P_d^{i,j}\}_{d=0}^{N_j}$ in
the following two steps.

\emph{Step 1.} In this step, we construct the chain
corresponding to spatial variables. Assume that
$R_{i,j}=Q(x_i,l)\times(t_j-2l^p,t_j)$ with
$(x_i,t_j)\in\mathbb{R}^{n+1}$ and $l:=l(R)/2^m$. Let
$Q_0^i:=Q(x_i,l)$. For any $d\in\mathbb{N}\cap[1,M_i]$ with
$M_i\in\mathbb{N}$ determined later, let
\begin{align}\label{20240915.1007}
Q_d^i:=Q_{d-1}^i-\frac{x_i}
{\vert x_i\vert}\frac{\theta_il}{2},
\end{align}
where $\theta_i\in[1,\sqrt{n}]$, depending only on the angle
between $x_i$ and the spatial axes, is such that the center of
$Q_d^i$ belongs to the boundary of $Q_{d-1}^i$. Notice that
there exists $b_i\in\mathbb{N}\cap[1,2^m-1]$, depending only on
$\vert x_i\vert$, such that
\begin{align}\label{20240915.1008}
\vert x_{i}\vert=\frac{\theta_i}{2}[l(R)-b_il].
\end{align}
From this and \eqref{20240915.1007}, it follows that, to
ensure that $Q_{M_{i}}^i=Q(\mathbf{0},l)$,
we need to choose
\begin{align}\label{20240915.1009}
M_{i}:=\frac{2\vert x_i\vert}{\theta_il}
=\frac{l(R)}{l}-b_i=2^m-b_i.
\end{align}
Observe that, for any $d\in\mathbb{N}\cap[1,M_i]$,
\begin{align}\label{20240915.1010}
\frac{1}{2^n}\leq\frac{\vert Q_d^i\cap Q_{d-1}^i\vert}
{\vert Q_d^i\vert}\leq\frac{1}{2}.
\end{align}
Then $\{Q_d^i\}_{d=0}^{M_{i}}$ forms a chain in
$\mathbb{R}^n$ starting from $Q(x_i,l)$ and
ending with $Q(\mathbf{0},l)$.

\emph{Step 2.} In this step, we specify $\mathfrak{R}$ and
$N_j$ and construct the chain $\{P_d^{i,j}\}_{d=0}^{N_j}$
based on the chains in Step 1. For any
$d\in\mathbb{Z}_+\cap[0,M_i]$, let
\begin{align*}
P_d^{i,j}:=Q_d^i\times\left(t_j-d\tau(1+\alpha)l^p-2l^p,
t_j-d\tau(1+\alpha)l^p\right).
\end{align*}
Then $P_0^{i,j}=R_{i,j}$ and $\text{pr}_x(P_{M_i}^{i,j})
=Q(\mathbf{0},l)$. To determine $\mathfrak{R}$, we first
assume that $j=1$ and $i\in\mathbb{N}\cap[1,2^{nm}]$
such that $Q(x_i,l)$ intersects with the boundary of
$\text{pr}_x(R)=Q(\mathbf{0},l(R))$. From
\eqref{20240915.1008} and \eqref{20240915.1009}, we infer that
$b_i=1$ and hence $M_i=2^m-1$ in this case. Let $N:=2^m-1$ and
$\mathfrak{R}:=P_N^{i,1}$. We show that (i) holds. Indeed, on
the one hand, notice that
\begin{align}\label{QN subset Q(0,L) and t1<Lp}
Q_N=Q(\mathbf{0},l)\subset Q(\mathbf{0},l(R))\ \ \mbox{and}\ \
t_1-N\tau(1+\alpha)l^p<t_1<[l(R)]^p.
\end{align}
On the other hand, by \eqref{20240731.1610}, we obtain
\begin{align*}
m\geq\frac{1}{p-1}\left\{1+\log_2\left[\frac{\tau(1+\alpha)}
{\gamma}\right]\right\},
\end{align*}
which further implies that
$2^{(p-1)m}\geq\frac{2\tau(1+\alpha)}{\gamma}$ and hence
$\gamma-\frac{2\tau(1+\alpha)}{2^{(p-1)m}}\geq0$. From this,
the definitions of both $J$ and $l$, and the fact that $\lceil
s\rceil\leq 2s$ for any $s\in[1,\infty)$, we deduce that
\begin{align*}
&t_1-(N+1)\tau(1+\alpha)l^p-(1-\alpha)l^p\\
&\quad=\left[\gamma+\frac{1-\gamma}{\lceil(1-\gamma)2^{pm}
/(1-\alpha)\rceil}-\frac{(N+1)\tau(1+\alpha)}{2^{pm}}
-\frac{(1-\alpha)}{2^{pm}}\right][l(R)]^p\\
&\quad\geq\left[\gamma+\frac{1-\gamma}{(1-\gamma)2^{pm+1}
/(1-\alpha)}-\frac{(N+1)\tau(1+\alpha)}{2^{pm}}
-\frac{(1-\alpha)}{2^{pm}}\right][l(R)]^p\\
&\quad=\left[\gamma-\frac{1-\alpha}
{2^{pm+1}}-\frac{\tau(1+\alpha)}{2^{(p-1)m}}\right][l(R)]^p
\geq\left[\gamma-\frac{2\tau(1+\alpha)}{2^{(p-1)m}}\right]
[l(R)]^p\geq0.
\end{align*}
This, together with both the fact that the bottom of
$\mathfrak{R}^+(\alpha)-(\mathbf{0},\tau(1+\alpha)
[l(\mathfrak{R})]^p)$ is
\begin{align*}
Q(\mathbf{0},l)\times
\left\{t_1-(N+1)\tau(1+\alpha)l^p-(1-\alpha)l^p\right\}
\end{align*}
and \eqref{QN subset Q(0,L) and t1<Lp}, further implies that
(i) holds.

Then we suppose that $j=1$ and $i\in\mathbb{N}\cap[1,2^{nm}]$
such that $Q(x_i,l)$ does not intersect with the boundary of
$\text{pr}_x(R)=Q(\mathbf{0},l(R))$. In this case, $b_i\neq1$
and hence $M_{i}\in\mathbb{N}\cap[1,N)$. For any
$d\in\mathbb{N}\cap[M_i,N-1]$, let
$P_{d+1}^{i,j}:=P_d^{i,j}-(\mathbf{0},\tau(1+\alpha)l^{p})$.
Then we are easy to see that $P_{N}^{i,j}=\mathfrak{R}$.

Now, we consider the residual situation
$j\in\mathbb{N}\cap[2,J]$ and
$i\in\mathbb{N}\cap[1,2^{nm}]$.
In this case, we first extend the chain
$\{P_d^{i,j}\}_{d=0}^{M_i}$ to the chain
$\{P_d^{i,j}\}_{d=0}^N$ in the same way as we did in the
above case. Then we can easily verify that
$\mathrm{pr}_x(P_N^{i,j})=\mathrm{pr}_x(P_N^{i,1})$.
However, in the temporal variable,
the distance between the top of $P_N^{i,j}$ and that of
$P_N^{i,1}$ is $(j-1)(1-\gamma)[l(R)]^p/J$. We can shift every
$P_d^{i,j}$ for $d\in\mathbb{N}\cap[1,2^{m-1}]$ and add
$\widetilde{M}_j$ parabolic rectangles into the chain to
guarantee that the eventual parabolic rectangle is
$\mathfrak{R}$. To be specific, we can choose $\beta_j\in[0,1)$
and $\widetilde{M}_j\in\mathbb{Z}_{+}$ such that
\begin{align}\label{20240915.2115}
\frac{2^{m-1}\beta_j(1-\alpha)[l(R)]^p}{2^{pm}}
+\frac{\widetilde{M}_j\tau(1+\alpha)[l(R)]^{p}}{2^{pm}}
=\frac{(j-1)(1-\gamma)[l(R)]^p}{J}.
\end{align}
Indeed, notice that there exists $\eta\in[1,2)$ such that
\begin{align*}
J=\left\lceil\frac{(1-\gamma)2^{pm}}
{1-\alpha}\right\rceil
=\frac{\eta(1-\gamma)2^{pm}}{1-\alpha}
\end{align*}
and hence we can rewrite \eqref{20240915.2115} as
\begin{align}\label{20240915.2152}
2^{m-1}\beta_j(1-\alpha)+\widetilde{M}_j\tau(1+\alpha)
=\frac{(j-1)(1-\alpha)}{\eta}.
\end{align}
Choose $\widetilde{M}_j\in\mathbb{Z}_{+}$ such that
\begin{align*}
\widetilde{M}_j\tau(1+\alpha)\leq\frac{(j-1)(1-\alpha)}{\eta}
<(\widetilde{M}_j+1)\tau(1+\alpha).
\end{align*}
That is, let $\widetilde{M}_j\in\mathbb{Z}_{+}$ be such that
\begin{align}\label{20240915.2208}
\frac{(j-1)(1-\alpha)}{2\tau(1+\alpha)}-1&\leq
\frac{(j-1)(1-\alpha)}{\eta\tau(1+\alpha)}-1<\widetilde{M}_j\\
&\leq\frac{(j-1)(1-\alpha)}{\tau(1+\alpha)}
\leq\frac{(j-1)(1-\alpha)}{\tau(1+\alpha)}.\notag
\end{align}
Let $\xi_j:=\frac{(j-1)(1-\alpha)}{\eta}
-\widetilde{M}_j(1+\alpha)$. Using this and the
choice of $\widetilde{M}_j$, we find that $\xi_j
\in[0,\tau(1+\alpha))$. Select $\beta_j\in[0,\infty)$ such that
$\xi_j=2^{m-1}\beta_{j}(1-\alpha)$.
From this and \eqref{20240731.1610}, we infer that
\begin{align*}
\beta_j=2^{\frac{-1}{p-1}-1-\epsilon}\left(\frac{\gamma}
{1+\alpha}\right)^{\frac{1}{p-1}}\frac{\xi_j}{\tau(1+\alpha)}.
\end{align*}
By this and $\xi_j\in[0,\tau(1+\alpha))$, we conclude that
\begin{align*}
0\leq\beta_j<2^{-1}\left(\frac{\gamma}{1+\alpha}\right)
^{\frac{1}{p-1}}<\frac{1}{2},
\end{align*}
which, together with the choices of both $\widetilde{M}_j$ and
$\beta_j$, further implies that \eqref{20240915.2152}
holds. Finally, for any $d\in\mathbb{N}\cap[1,2^{m-1}]$,
we modify the definition of $P_d^{i,j}$ by setting
\begin{align*}
P_d^{i,j}:=Q_d^i\times\left(t_j-d\left[\tau(1+\alpha)+
\beta_j(1-\alpha)\right]l^p-2l^p,
t_j-d\left[\tau(1+\alpha)+\beta_j(1-\alpha)\right]l^p\right).
\end{align*}
If $\widetilde{M}_j\in\mathbb{N}$, then, for any
$d\in\mathbb{N}\cap[N,N+\widetilde{M}_j-1]$, let
$P_{d+1}^{i,j}:=P_d^{i,j}-\left(\mathbf{0},\tau(1+\alpha)
l^p\right)$.
Then $P_{N+\widetilde{M}_j}^{i,j}=\mathfrak{R}$.
For convenience, let $\widetilde{M}_1:=0$.

In conclusion, for any $i\in\mathbb{N}\cap[1,2^{nm}]$
and $j\in\mathbb{N}\cap[1,J]$, we have constructed a chain
$\{P_d^{i,j}\}_{d=0}^{N+\widetilde{M}_j}$ starting from
$R_{i,j}$ and ending with $\mathfrak{R}$. Let $N_j:=N+
\widetilde{M}_j$. Applying the definitions of both $N$ and $J$,
\eqref{20240915.2208}, and \eqref{20240731.1610}, we obtain
\begin{align*}
N_j&=2^m-1+\widetilde{M}_j\leq2^m+
\frac{(j-1)(1-\alpha)}{\tau(1+\alpha)}\\
&\leq2^m+\frac{(J-1)(1-\alpha)}{\tau(1+\alpha)}\leq
2^m+\frac{(1-\gamma)2^{pm}}{1-\alpha}\frac{1-\alpha}
{\tau(1+\alpha)}\\
&\leq2^{pm+1}\leq2^{\frac{p}{p-1}+3p+1}
\left[\frac{\tau(1+\alpha)}
{1-\alpha}\right]^p\left[\frac{\tau(1+\alpha)}{\gamma}\right]
^{\frac{p}{p-1}}.
\end{align*}
From \eqref{20240915.1010}, the fact that
$\beta_j\in[0,\frac{1}{2})$, and the construction of
$\{P_d^{i,j}\}_{d=0}^{N_j}$, we deduce that,
for any $i\in\mathbb{N}\cap[1,2^{nm}]$,
$j\in\mathbb{N}\cap[1,J]$, and
$d\in\mathbb{N}\cap[1,N_j]$,
\begin{align*}
\frac{|(P_d^{i,j})^+(\alpha)\cap(S_{d-1}^{i,j})^-(\alpha)|}
{|(P_d^{i,j})^+(\alpha)|}\in
\left[\frac{1}{2^{n+1}},1\right],
\end{align*}

Similarly, for any $k\in\mathbb{N}\cap[1,2^{nm}]$ and $\iota\in\mathbb{N}
\cap[1,J]$, we can construct a chain
$\{\widetilde{P}_d^{k,\iota}\}_{d=0}^{\widetilde{N_\iota}}$
such that $\widetilde{N}_\iota\in\mathbb{N}\cap[1,2C_1]$,
$\widetilde{P}_0^{k,\iota}=\widetilde{R}_{k,\iota}$,
$\widetilde{P}_{\widetilde{N}_\iota}^{k,\iota}=\mathfrak{R}$,
and
\begin{align*}
\frac{|(\widetilde{S}_d^{k,\iota})
^-(\alpha)\cap(\widetilde{P}_{d-1}^{k,\iota})^+(\alpha)|}
{|(\widetilde{S}_d^{k,\iota})^-(\alpha)|}\in
\left[\frac{1}{2^{n+1}},1\right]
\end{align*}
for any $d\in\mathbb{N}\cap[1,\widetilde{N}_\iota]$. This
finishes the proof of (ii)-(iv) and hence the above claim.

Next, based on (i)-(iv), we can build a chain connecting
$R_{i,j}$ and $\widetilde{R}_{k,\iota}$ for any
$i,k\in\mathbb{N}\cap[1,2^{nm}]$ and
$j,\iota\in\mathbb{N}\cap[1,J]$. More precisely, for any
$i,k\in\mathbb{N}\cap[1,2^{nm}]$,
$j,\iota\in\mathbb{N}\cap[1,J]$, and
$d\in\mathbb{Z}_+\cap[0,\widetilde{N}_\iota+N_j]$, let
\begin{align*}
P_{d}^{i,j,k,\iota}:=
\begin{cases}
\widetilde{P}_{d}^{k,\iota}&\text{if }d\in\mathbb{Z}_+
\cap\left[0,\widetilde{N}_\iota\right],\\
P_{\widetilde{N}_\iota+N_j-d}^{i,j}&\text{if }d\in\mathbb{Z}_+
\cap\left[\widetilde{N}_\iota+1,\widetilde{N}_\iota+N_j\right]
\end{cases}
\end{align*}
and $(S_d^{i,j,k,\iota})^-(\alpha):=
(P_d^{i,j,k,\iota})^+(\alpha)-(\mathbf{0},\tau(1+\alpha)
[l(P_d^{i,j,k,\iota})]^p)$.
Then, for any $i,k\in\mathbb{N}\cap[1,2^{nm}]$ and
$j,\iota\in\mathbb{N}\cap[1,J]$, we have constructed a chain
$\{P_{d}^{i,j,k,\iota}\}_{d=0}^{\widetilde{N}_\iota+N_j}$
consisting of congruent parabolic rectangles. From (i) through
(iv), it follows that the following statements hold.
\begin{enumerate}
\item[(v)] For any $i,k\in\mathbb{N}\cap[1,2^{nm}]$,
$j,\iota\in\mathbb{N}\cap[1,J]$, and
$d\in\mathbb{Z}_+\cap[0,\widetilde{N}_\iota+N_j]$,
$P_d^{i,j,k,\iota}\subset R$.

\item[(vi)] For any $i,k\in\mathbb{N}\cap[1,2^{nm}]$ and
$j,\iota\in\mathbb{N}\cap[1,J]$,
$P_0^{i,j,k,\iota}=\widetilde{R}_{k,\iota}$ and
$P_{\widetilde{N}_\iota+N_j}^{i,j,k,\iota}=R_{i,j}$.

\item[(vii)] For any $i,k\in\mathbb{N}\cap[1,2^{nm}]$,
$j,\iota\in\mathbb{N}\cap[1,J]$, and
$d\in\mathbb{N}\cap[1,\widetilde{N}_\iota+N_j]$,
\begin{align*}
\frac{|(S_d^{i,j,k,\iota})^-(\alpha)
\cap(P_{d-1}^{i,j,k,\iota})
^+(\alpha)|}{|(S_d^{i,j,k,\iota})^-(\alpha)|}
\in\left[\frac{1}{2^{n+1}},1\right].
\end{align*}
\end{enumerate}

Now, we prove that there exists a positive constant $C_2$ such
that, for any given $i,k\in\mathbb{N}\cap[1,2^{nm}]$
and $j,\iota\in\mathbb{N}\cap[1,J]$,
\begin{align}\label{20240731.1506}
\fint_{\widetilde{R}_{k,\iota}^+(\alpha)-(\mathbf{0},
\tau(1+\alpha)[l(\widetilde{R}_{k,\iota})]^p)}u^q\leq C_2
\fint_{R_{i,j}^+(\alpha)-(\mathbf{0},\tau(1+\alpha)
[l(R_{i,j})]^p)}u^q.
\end{align}
Indeed, by \cite[Lemma 7.4]{ks(apde-2016)}, \cite[Theorems
3.1 and 4.1]{km(am-2024)}, and the assumption that $u\in
A_\infty^+(\gamma)$, we find that there
exist $K,\delta\in(0,\infty)$ such that, for any
$R\in\mathcal{R}_p^{n+1}$ and any measurable set $E\subset
R^+(\alpha)$,
\begin{align*}
\left(u^q\right)\left(R^+(\alpha)-
(0,\tau(1+\alpha)l(R)^p)\right)\leq
K\left[\frac{|R^+(\alpha)|}{|E|}\right]^{\delta}
\left(u^q\right)(E).
\end{align*}
From this and (vii),
we infer that, for any $d\in\mathbb{N}
\cap[1,\widetilde{N}_\iota+N_j]$,
\begin{align*}
\fint_{(S_{d-1}^{i,j,k,\iota})^-(\alpha)}u^q&=\frac{1}
{|(S_{d-1}^{i,j,k,\iota})^-(\alpha)|}\left(u^q\right)
\left(\left(S_{d-1}^{i,j,k,\iota}\right)^-(\alpha)\right)\\
&\leq\frac{K}{|(S_{d-1}^{i,j,k,\iota})^-(\alpha)|}
\left[\frac{|(P_{d-1}^{i,j,k,\iota})^+(\alpha)|}
{|(S_{d}^{i,j,k,\iota})^-(\alpha)\cap(P_{d-1}^{i,j,k,\iota})
^+(\alpha)|}\right]^\delta\\
&\quad\times\left(u^q\right)\left(\left(S_d^{i,j,k,\iota}
\right)^-(\alpha)\cap\left(P_{d-1}^{i,j,k,\iota}\right)
^+(\alpha)\right)\\&\leq
2^{(n+1)\delta}K\fint_{(S_d^{i,j,k,\iota})^-(\alpha)}u^q.
\end{align*}
Iterating this inequality and using (vi) and (iii), we obtain
\begin{align*}
\fint_{\widetilde{R}_{k,\iota}^+(\alpha)-(\mathbf{0},
\tau(1+\alpha)[l(\widetilde{R}_{k,\iota})]^p)}u^q&=
\fint_{(S_0^{i,j,k,\iota})^-(\alpha)}u^q\leq
\left[2^{(n+1)\delta}K\right]^{\widetilde{N}_\iota+N_j}
\fint_{(S_{\widetilde{N}_\iota+N_j}^{i,j,k,\iota})
^-(\alpha)}u^q\\
&\leq\left[2^{(n+1)\delta}K\right]^{3C_1}
\fint_{R_{i,j}^+(\alpha)-(\mathbf{0},\tau(1+\alpha)
[l(R_{i,j})]^p)}u^q.
\end{align*}
Hence \eqref{20240731.1506} holds with
$C_2:=[2^{(n+1)\delta}K]^{3C_1}$.

To show $(u,v)\in TA_{r,q}^+(\gamma)$,
we still need the following estimate.
From the fact that $\lceil s\rceil\leq2s$ for any
$s\in[1,\infty)$, \eqref{20240731.1610}, and the definition
of $J$, we deduce that
\begin{align}\label{20240731.1619}
2^{nm}J&=2^{nm}\left\lceil\frac{(1-\gamma)2^{pm}}{1-\alpha}
\right\rceil\leq\frac{1-\gamma}{1-\alpha}2^{(n+p)m+1}\\
&\leq\frac{1-\gamma}{1-\alpha}\left(\frac{1+\alpha}{1-\alpha}
\right)^{n+p}\left(\frac{1+\alpha}{\gamma}\right)
^{n+p}2^{\frac{n+p}{p-1}+3(n+p)+1}=:C_3.\notag
\end{align}
Applying the obvious facts that
\begin{align*}
R^+(\gamma)=\bigcup_{i=1}^{2^{nm}}\bigcup_{j=1}^J V_{i,j}^+
\ \ \mbox{and}\ \ R^-(\gamma)=\bigcup_{k=1}^{2^{nm}}
\bigcup_{\iota=1}^JU_{k,\iota}^-,
\end{align*}
$V_{i,j}^+\subset R_{i,j}^+(\alpha)$ for any
$i\in\mathbb{N}\cap[1,2^{nm}]$ and
$j\in\mathbb{N}\cap[1,J]$, \eqref{20240915.0844},
$U_{k,\iota}^-\subset\widetilde{R}_{k,\iota}^+(\alpha)
-(\mathbf{0},\tau(1+\alpha)l(\widetilde{R}_{k,\iota})^p)$ for
any $k\in\mathbb{N}\cap[1,2^{nm}]$ and
$\iota\in\mathbb{N}\cap[1,J]$, \eqref{20240915.0853},
\eqref{20240731.1506}, \eqref{20240731.1619},
\eqref{20240815.2233}, the definition of $J$, and the fact
that $\lceil s\rceil\leq2s$ for any $s\in[1,\infty)$, we
conclude that
\begin{align}\label{20240731.1628}
&\fint_{R^-(\gamma)}u^q\left[\fint_{R^+(\gamma)}
v^{-r'}\right]^{\frac{q}{r'}}\\
&\quad\leq\left[\sum_{k=1}^{2^{nm}}
\sum_{\iota=1}^{J}\frac{|U_{k,\iota}^-|}{|R^-(\gamma)|}
\fint_{U_{k,\iota}^-}u^q\right]
\left[\sum_{i=1}^{2^{nm}}\sum_{j=1}^{J}
\frac{|V_{i,j}^+|}{|R^+(\gamma)|}
\fint_{V_{i,j}^+}v^{-r'}\right]^{\frac{q}{r'}}\notag\\
&\quad\leq\left(\frac{1}{2^{nm}J}\right)^{1+\frac{q}{r'}}
\left[\frac{(1-\alpha)J}{2^{pm}(1-\gamma)}\right]
^{1+\frac{q}{r'}}\notag\\
&\quad\quad\times\left[\sum_{k=1}^{2^{nm}}\sum_{\iota=1}^{J}
\fint_{\widetilde{R}_{k,\iota}^+(\alpha)-(\mathbf{0},
\tau(1+\alpha)[l(\widetilde{R}_{k,\iota})]^p)}u^q\right]
\left[\sum_{i=1}^{2^{nm}}\sum_{j=1}^{J}
\fint_{R_{i,j}^+(\alpha)}v^{-r'}\right]^{\frac{q}{r'}}\notag\\
&\quad\leq\left(\frac{1}{2^{nm}J}\right)^{1+\frac{q}{r'}}
\left[\frac{(1-\alpha)J}{2^{pm}(1-\gamma)}\right]
^{1+\frac{q}{r'}}\max\left\{1,\
\left(2^{nm}J\right)^{\frac{q}{r'}-1}\right\}\notag\\
&\quad\quad\times\sum_{i,k=1}^{2^{nm}}\sum_{j,\iota=1}^J
\fint_{\widetilde{R}_{k,\iota}^+(\alpha)-(\mathbf{0},
\tau(1+\alpha)[l(\widetilde{R}_{k,\iota})]^p)}u^q
\left[\fint_{R_{i,j}^+(\alpha)}v^{-r'}\right]^{\frac{q}{r'}}
\notag\\
&\quad\leq\left(\frac{1}{2^{nm}J}\right)^{1+\frac{q}{r'}}
\left[\frac{(1-\alpha)J}{2^{pm}(1-\gamma)}\right]
^{1+\frac{q}{r'}}\max\left\{1,\
\left(2^{nm}J\right)^{\frac{q}{r'}-1}\right\}C_2\notag\\
&\quad\quad\times\sum_{i,k=1}^{2^{nm}}\sum_{j,\iota=1}^J
\fint_{R_{i,j}^+(\alpha)-(\mathbf{0},\tau(1+\alpha)
[l(R_{i,j})]^p)}u^q
\left[\fint_{R_{i,j}^+(\alpha)}v^{-r'}
\right]^{\frac{q}{r'}}\notag\\
&\quad\leq2^{1+\frac{q}{r'}}\max\left\{1,\,
C_3^{1-\frac{q}{r'}}\right\}C_2C.\notag
\end{align}
Taking the supremum over all $R\in\mathcal{R}_p^{n+1}$,
we obtain $(u,v)\in TA_{r,q}^+(\gamma)$ and
\begin{align*}
[u,v]_{TA_{r,q}^+(\gamma)}\leq2^{1+\frac{q}{r'}}\max\left\{1,\,
C_3^{1-\frac{q}{r'}}\right\}C_2C,
\end{align*}
which completes the proof of the sufficiency of (i).

Finally, to prove the present theorem, it remains to show
the sufficiency of (ii). Indeed, using
\cite[Exercises 1.1.3(a)]{Grafakos first volume}
and letting $r\to1^+$ in \eqref{20240731.1628} with the
assumption therein replaced by \eqref{20241006.0907}, we find
that the sufficiency of (ii) holds, which then completes
the proof of Theorem \ref{independence of time lag 2}.
\end{proof}

\begin{remark}
\begin{enumerate}
\item[(i)]
The assumption that $u\in A_\infty^+(\gamma)$
is only used to prove the sufficiency of
Theorem~\ref{independence of time lag 2}.

\item[(ii)]
Theorem~\ref{independence of time lag 2} when both $r=q$ and
$u=v$ coincides with \cite[Theorem 3.1]{km(am-2024)}.
\end{enumerate}
\end{remark}

The following is a direct consequence of Theorem
\ref{independence of time lag 2}.

\begin{corollary}\label{independence of time lag}
Let $\gamma\in(0,1)$, $1\leq r\leq q<\infty$, and $(u,v)$
be a pair of nonnegative functions on $\mathbb{R}^{n+1}$.
Assume that $u\in A_\infty^+(\gamma)$. Then the following
statements hold.
\begin{enumerate}
\item[\rm(i)] For any $\alpha\in(0,1)$, $(u,v)\in TA_{r,q}^+
(\gamma)$ if and only if $(u,v)\in TA_{r,q}^+(\alpha)$.

\item[\rm(ii)] $(u,v)\in TA_{r,q}^+(\gamma)$ if and only if
there exists a positive constant $C$ such that, for any
$R\in\mathcal{R}_p^{n+1}$,
\begin{align*}
\fint_{R^{--}(\gamma)}u^q\left[\fint_{R^+(\gamma)}v^
{-r'}\right]^{\frac{q}{r'}}\leq C,
\end{align*}
where
$R^{--}(\gamma):=R^-(\gamma)-(\mathbf{0},(1+\gamma)[l(R)]^p)$.
\end{enumerate}
\end{corollary}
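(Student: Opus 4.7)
The plan is to derive both parts as direct consequences of Theorem~\ref{independence of time lag 2} by specializing the parameter $\tau$ and identifying the time-shifted sets appearing in its integral condition with familiar objects.

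For part (i), a direct computation shows that, for any $R=R(x_R,t_R,L)\in\mathcal{R}_p^{n+1}$,
\begin{align*}
R^+(\alpha)-\left(\mathbf{0},(1+\alpha)L^p\right)=Q(x_R,L)\times\left(t_R-L^p,t_R-\alpha L^p\right)=R^-(\alpha).
\end{align*}
Hence, setting $\tau=1$ in Theorem~\ref{independence of time lag 2}, the integral condition~\eqref{20240815.2233} (or \eqref{20241006.0907} when $r=1$) becomes exactly the defining condition of $TA_{r,q}^+(\alpha)$. This gives the equivalence $(u,v)\in TA_{r,q}^+(\gamma)\Longleftrightarrow(u,v)\in TA_{r,q}^+(\alpha)$ for all $0<\gamma<\alpha<1$ and all $u\in A_\infty^+(\gamma)$. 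Specialising to $r=q$ and $u=v$ recovers \cite[Theorem 3.1]{km(am-2024)}, which asserts that $A_q^+(\gamma)$, and hence $A_\infty^+$, is independent of the choice of the time lag in $(0,1)$. Therefore $u\in A_\infty^+(\alpha)$ for every $\alpha\in(0,1)$, and swapping the roles of $\gamma$ and $\alpha$ in the above equivalence handles the remaining case $\alpha<\gamma$; the case $\alpha=\gamma$ is trivial.

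For part (ii), a similar direct computation yields
\begin{align*}
R^+(\gamma)-\left(\mathbf{0},2(1+\gamma)L^p\right)=Q(x_R,L)\times\left(t_R-(2+\gamma)L^p,t_R-(1+2\gamma)L^p\right)=R^{--}(\gamma).
\end{align*}
Choose any $\gamma_0\in(0,\gamma)$; by the independence of $A_\infty^+$ on the time lag established above, $u\in A_\infty^+(\gamma_0)$. Applying Theorem~\ref{independence of time lag 2} with the lags $(\gamma_0,\gamma)$ in place of $(\gamma,\alpha)$ and with $\tau=2$, together with the above set identity, yields that $(u,v)\in TA_{r,q}^+(\gamma_0)$ holds if and only if the displayed inequality in part (ii) holds for every $R\in\mathcal{R}_p^{n+1}$. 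Combining this with part (i), which gives $(u,v)\in TA_{r,q}^+(\gamma_0)\Longleftrightarrow(u,v)\in TA_{r,q}^+(\gamma)$, completes the proof. The endpoint case $r=1$ is treated identically by invoking Theorem~\ref{independence of time lag 2}(ii) in place of Theorem~\ref{independence of time lag 2}(i) and interpreting the bracketed factor as $[\mathop\mathrm{ess\,inf}_{(x,t)\in R^+(\gamma)}v(x,t)]^{-q}$.

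There is no genuine obstacle: the whole proof reduces to verifying two elementary set identities and applying Theorem~\ref{independence of time lag 2} twice. The only subtle point is the need to know that $A_\infty^+$ is insensitive to the lag, so that assuming $u\in A_\infty^+(\gamma)$ permits the application of Theorem~\ref{independence of time lag 2} with smaller lag $\gamma_0$ in the proof of part (ii); this is however immediate from the one-weight specialisation of Theorem~\ref{independence of time lag 2} itself, as recorded in \cite[Theorem 3.1]{km(am-2024)}.
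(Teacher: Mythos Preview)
Your proposal is correct and matches the paper's intended approach: the paper simply states that the corollary is a direct consequence of Theorem~\ref{independence of time lag 2} without giving details, and you have accurately supplied those details by specializing $\tau=1$ for part (i) and $\tau=2$ (with an auxiliary lag $\gamma_0<\gamma$) for part (ii). The introduction of $\gamma_0$ is indeed the right way to handle the strict inequality $\gamma<\alpha$ in the hypotheses of Theorem~\ref{independence of time lag 2}, and your appeal to the lag independence of $A_\infty^+$ (via \cite[Theorem~3.1]{km(am-2024)}, which the paper already uses inside the proof of Theorem~\ref{independence of time lag 2}) to transfer the assumption $u\in A_\infty^+(\gamma)$ to other lags is exactly what is needed.
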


\begin{remark}
Corollary \ref{independence of time lag} when $u=v$ coincides
with \cite[Lemma 2.1(3)]{mhy(fm-2023)}.
\end{remark}

Using Corollary
\ref{independence of time lag},
we obtain the following self-improving property of the parabolic Muckenhoupt
two-weight class with time lag.

\begin{corollary}\label{self-improving property}
Let $\gamma\in(0,1)$, $1<r\leq q<\infty$, and $(u,v)\in
TA_{r,q}^+(\gamma)$. If $u\in A_\infty^+(\gamma)$, then
there exists $\delta_0\in(0,\infty)$, depending only on $n$,
$p$, $\gamma$, $r$, $q$, and $[u,v]_{TA_{r,q}^+(\gamma)}$,
such that, for any $\delta\in(0,\delta_0)$, $(u,v)\in
TA_{r,q+\delta}^+(\gamma)$.
\end{corollary}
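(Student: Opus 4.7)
\medskip
\noindent\textbf{Proof proposal.} The plan is to exploit the $A_\infty^+(\gamma)$ hypothesis on $u$ to upgrade its integrability on the lower parts $R^-(\gamma)$ to a slightly higher power than $q$, and then trade this higher integrability for a larger exponent in the $TA_{r,q}^+(\gamma)$-condition by means of H\"older's inequality.

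First, I would invoke the parabolic reverse-H\"older machinery underlying Theorem~\ref{independence of time lag 2} (i.e.\ \cite[Lemma~7.4]{ks(apde-2016)} and \cite[Theorems~3.1 and~4.1]{km(am-2024)}). Since $u\in A_\infty^+(\gamma)$, this machinery yields a parabolic Gehring-type reverse H\"older inequality for $u$ that, after raising to the $q$-th power and using Jensen's inequality $(\fint u)^q\leq\fint u^q$, can be promoted to the following reverse-H\"older-type statement for $u^q$: there exist $\eta_0\in(0,\infty)$ and $C_0\in(0,\infty)$, depending only on $n$, $p$, $\gamma$, $q$ and the $A_\infty^+(\gamma)$-constant of $u$, such that for every $R\in\mathcal{R}_{p}^{n+1}$,
\[
\left(\fint_{R^-(\gamma)}u^{q(1+\eta_0)}\right)^{\frac{1}{1+\eta_0}}\leq C_0\fint_{R^-(\gamma)}u^q.
\]
Set $\delta_0:=q\eta_0$. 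For any $\delta\in(0,\delta_0)$, apply H\"older's inequality with exponent $q(1+\eta_0)/(q+\delta)>1$ to obtain
\[
\fint_{R^-(\gamma)}u^{q+\delta}\leq\left(\fint_{R^-(\gamma)}u^{q(1+\eta_0)}\right)^{\frac{q+\delta}{q(1+\eta_0)}}\leq C_0^{\frac{q+\delta}{q}}\left(\fint_{R^-(\gamma)}u^q\right)^{\frac{q+\delta}{q}}.
\]
Multiplying both sides by $\bigl(\fint_{R^+(\gamma)}v^{-r'}\bigr)^{(q+\delta)/r'}$ and inserting the hypothesis $(u,v)\in TA_{r,q}^+(\gamma)$ yields
\[
\fint_{R^-(\gamma)}u^{q+\delta}\left[\fint_{R^+(\gamma)}v^{-r'}\right]^{\frac{q+\delta}{r'}}\leq C_0^{\frac{q+\delta}{q}}[u,v]_{TA_{r,q}^+(\gamma)}^{\frac{q+\delta}{q}}.
\]
Taking the supremum over $R\in\mathcal{R}_{p}^{n+1}$ then gives $(u,v)\in TA_{r,q+\delta}^+(\gamma)$, which is the desired conclusion.

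The main obstacle is the first displayed reverse H\"older inequality, specifically securing a strictly super-$q$ power of $u$ on the left-hand side rather than merely a strictly super-$1$ power of $u$. For a generic $A_\infty$ weight the reverse-H\"older exponent may be arbitrarily close to $1$, so the upgrade from $u$ to $u^q$ cannot be purely algebraic; instead one must combine the parabolic Gehring iteration in \cite{km(am-2024)} with the fact that $(u,v)\in TA_{r,q}^+(\gamma)$ already enforces local $L^q$-integrability of $u$ on the lower parts, and, if needed, use Corollary~\ref{independence of time lag}(i) to shrink the time lag so that the reverse-H\"older constants depend only on the $A_\infty^+(\gamma)$-constant of $u$. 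This step is where the threshold $\delta_0$ is fixed and where its dependence on $n$, $p$, $\gamma$, $r$, $q$, and $[u,v]_{TA_{r,q}^+(\gamma)}$ is produced.
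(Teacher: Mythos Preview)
Your proposal contains a genuine gap at the very first step. The reverse H\"older inequality you want,
\[
\left(\fint_{R^-(\gamma)}u^{q(1+\eta_0)}\right)^{\frac{1}{1+\eta_0}}\leq C_0\fint_{R^-(\gamma)}u^q,
\]
has \emph{both} sides on the same set $R^-(\gamma)$. This is the classical (two-sided) reverse H\"older inequality, and it is simply \emph{false} for parabolic $A_\infty^+$ weights in general. The standard one-sided example $u(x,t)=e^{t}$ lies in every $A_s^+(\gamma)$ but fails any same-set reverse H\"older inequality as the time-length of $R$ tends to infinity. So the obstacle is not merely pushing the exponent from ``super-$1$'' to ``super-$q$''; the inequality you wrote down cannot be obtained from the $A_\infty^+(\gamma)$ hypothesis for any exponent.

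What the parabolic Gehring machinery in \cite{km(am-2024)} actually delivers (once one knows $u^q\in A_{q_u}^+(\gamma)$, via \cite[Lemma~7.4]{ks(apde-2016)}) is a \emph{shifted} reverse H\"older inequality:
\[
\left(\fint_{R^-(\gamma)}u^{q(1+\delta_0)}\right)^{\frac{1}{1+\delta_0}}\leq C\fint_{R^+(\gamma)}u^q,
\]
with $R^+(\gamma)$ on the right. Multiplying this by the $v^{-r'}$ factor on $R^+(\gamma)$ no longer matches the $TA_{r,q}^+(\gamma)$ condition, because you need $u^q$ on the \emph{lower} part. The paper resolves this by applying the shifted inequality to a translated rectangle so that the right-hand side lands on $R^-(\gamma)$ and the left-hand side lands on $R^{--}(\gamma):=R^-(\gamma)-(\mathbf{0},(1+\gamma)[l(R)]^p)$; this yields
\[
\fint_{R^{--}(\gamma)}u^{q(1+\delta_0)}\left[\fint_{R^+(\gamma)}v^{-r'}\right]^{\frac{q(1+\delta_0)}{r'}}\leq C^{\,1+\delta_0}[u,v]_{TA_{r,q}^+(\gamma)}^{\,1+\delta_0},
\]
and then invokes Corollary~\ref{independence of time lag}(ii) (not~(i)) to pass from the $R^{--}/R^+$ configuration back to the standard $R^-/R^+$ one. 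That last step is exactly where the chaining argument and the assumption $u\in A_\infty^+(\gamma)$ do the real work, and it is the piece your outline is missing.
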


\begin{proof}
From the assumption that $u\in A_\infty^+(\gamma)$ and
\cite[Lemma 7.4]{ks(apde-2016)},
it follows that there exists $q_u\in(1,\infty)$ such that
$u^q\in A_{q_u}^+(\gamma)$.
By this and \cite[Corollary 5.3]{km(am-2024)},
we conclude that there exist
positive constants $C$ and $\delta_0$, depending only on $n$,
$p$, $\gamma$, $q$, and the weight constant of $u^q$, such
that, for any $R\in\mathcal{R}_p^{n+1}$,
\begin{align*}
\left[\fint_{R^-(\gamma)}u^{q(1+\delta_0)}\right]
^{\frac{1}{1+\delta_0}}\leq C\fint_{R^+(\gamma)}u^q,
\end{align*}
which further implies that
\begin{align*}
&\left[\fint_{R^{--}(\gamma)}u^{q(1+\delta_0)}\right]
^{\frac{1}{q(1+\delta_0)}}\left[\fint_{R^+(\gamma)}v^
{-r'}\right]^{\frac{1}{r'}}\\
&\quad\leq C\left[\fint_{R^-(\gamma)}u^q\right]^{\frac{1}{q}}
\left[\fint_{R^+(\gamma)}v^{-r'}\right]^{\frac{1}{r'}}
\leq C[u,v]_{TA_{r,q}^+(\gamma)}^{\frac{1}{q}}.
\end{align*}
From this and Corollary \ref{independence of time lag}(ii),
we infer that $(u,v)\in TA_{r,q+\delta_0}^+(\gamma)$. This,
together with Proposition \ref{nested property}(ii), further
implies that, for any $\delta\in(0,\delta_0)$, $(u,v)\in
TA_{r,q+\delta}^+(\gamma)$. This finishes the proof of
Corollary \ref{self-improving property}.
\end{proof}

\section{Characterizations of Weighted Boundedness of\\
Parabolic Fractional Maximal Operators with Time Lag}
\label{section4}

In this section, we characterize the parabolic Muckenhoupt
(two-weight) class with time lag via the (weak-type) weighted
boundedness of the parabolic fractional maximal operator.
Recall that, for any given $q\in[1,\infty)$ and any nonnegative
locally integrable function $\omega$ on $\mathbb{R}^{n+1}$,
the \emph{weighted weak Lebesgue space
$L^{q,\infty}(\mathbb{R}^{n+1},\omega)$} is defined to be the
set of all measurable functions $f$ on $\mathbb{R}^{n+1}$ such
that
\begin{align*}
\|f\|_{L^{q,\infty}(\mathbb{R}^{n+1},\omega)}:=
\sup_{\lambda\in(0,\infty)}\lambda\left[\omega(\{|f|>\lambda\})
\right]^\frac1q<\infty.
\end{align*}
Specifically, if $\omega\equiv1$, then
$L^{q,\infty}(\mathbb{R}^{n+1},1)$ is exactly the
\emph{weak Lebesgue space} and we simply write
$L^{q,\infty}(\mathbb{R}^{n+1}):=
L^{q,\infty}(\mathbb{R}^{n+1},1)$. The following theorem is
the main result of this section. We borrow some ideas and
techniques from \cite{fmo(tams-2011),km(am-2024)}.

\begin{theorem}\label{weak type inequality}
Let $\gamma\in(0,1)$, $\beta\in[0,1)$, $1\leq r\leq
q<\infty$, $\beta=\frac{1}{r}-\frac{1}{q}$, and $(u,v)$ be a
pair of nonnegative functions on $\mathbb{R}^{n+1}$. If $u\in
A_\infty^+(\gamma)$, then $(u,v)\in TA_{r,q}^+(\gamma)$ if
and only if there exists a positive constant $C$
such that, for any $f\in L^r(\mathbb{R}^{n+1},v^r)$,
\begin{align}\label{20240725.1724}
\left\|M^{\gamma+}_\beta(f)\right\|
_{L^{q,\infty}(\mathbb{R}^{n+1},u^q)}\leq C
\|f\|_{L^r(\mathbb{R}^{n+1},v^r)}.
\end{align}
\end{theorem}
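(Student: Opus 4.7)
The plan is to prove the two directions separately: the sufficiency (that weak-type boundedness implies the $TA_{r,q}^+(\gamma)$ condition) is elementary and follows by testing \eqref{20240725.1724} on a well-chosen function, while the necessity is the main content and will combine the parabolic covering technology of \cite{ks(apde-2016),km(am-2024)} with the time-lag flexibility afforded by Theorem~\ref{independence of time lag 2}.

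For the \emph{sufficiency}, fix $R\in\mathcal{R}_p^{n+1}$. When $r>1$ I would test \eqref{20240725.1724} on $f:=v^{-r'}\mathbf{1}_{R^+(\gamma)}$; when $r=1$, on $f:=\mathbf{1}_{E_{\epsilon}}$ where $E_\epsilon\subset R^+(\gamma)\cap\{v<\mathop\mathrm{ess\,inf}_{R^+(\gamma)}v+\epsilon\}$ has positive measure. Since $R$ itself is admissible in Definition~\ref{parabolic maximal operators} for every $(x,t)\in R^-(\gamma)$, one gets the uniform lower bound $M^{\gamma+}_\beta(f)(x,t)\ge|R^+(\gamma)|^{\beta}\fint_{R^+(\gamma)}f$ throughout $R^-(\gamma)$. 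Feeding this into \eqref{20240725.1724}, using the identity $\|v^{-r'}\mathbf{1}_{R^+(\gamma)}\|_{L^r(v^r)}^r=\int_{R^+(\gamma)}v^{-r'}$, and applying $\beta=\frac1r-\frac1q$ to cancel the powers of $|R^+(\gamma)|$, reduces the resulting inequality directly to the definition of $TA_{r,q}^+(\gamma)$; the case $r=1$ follows by letting $\epsilon\to0$ after the analogous computation.

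For the \emph{necessity}, fix $\lambda\in(0,\infty)$ and $f\in L^r(\mathbb{R}^{n+1},v^r)$, and set $E_\lambda:=\{M^{\gamma+}_\beta(f)>\lambda\}$. For each $(x,t)\in E_\lambda$ there is a parabolic rectangle $R_{(x,t)}$ with $(x,t)\in R_{(x,t)}^-(\gamma)$ and $|R_{(x,t)}^+(\gamma)|^{\beta}\fint_{R_{(x,t)}^+(\gamma)}|f|>\lambda$. Applying the H\"older inequality when $r>1$, or the pointwise bound $\fint_{R^+(\gamma)}|f|\le[\mathop\mathrm{ess\,inf}_{R^+(\gamma)}v]^{-1}\fint_{R^+(\gamma)}|f|v$ when $r=1$, together with $(u,v)\in TA_{r,q}^+(\gamma)$ and $\beta=\frac1r-\frac1q$, yields the key per-rectangle bound
\begin{align*}
\lambda^{q}\,u^q\bigl(R_{(x,t)}^-(\gamma)\bigr)\lesssim[u,v]_{TA_{r,q}^+(\gamma)}\left(\int_{R_{(x,t)}^+(\gamma)}|f|^rv^r\right)^{\frac{q}{r}}.
\end{align*}
The heart of the proof is then a parabolic Vitali-type selection: extracting a subfamily $\{R_j\}\subset\{R_{(x,t)}\}$ such that $E_\lambda\subset\bigcup_j R_j^-(\gamma)$ up to a $u^q$-null set, while the upper parts $\{R_j^+(\gamma)\}$ are pairwise disjoint. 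Once this is achieved, summing the displayed bound and using both the disjointness and $\sum_j a_j^{q/r}\le(\sum_j a_j)^{q/r}$ (valid since $q/r\ge1$) produces \eqref{20240725.1724}.

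The principal obstacle is arranging this selection: in the parabolic geometry the ``$-$'' and ``$+$'' parts of a rectangle sit at different temporal heights, so disjointness of one family is not inherited from disjointness of the other. My plan is to pass to an auxiliary time lag $\gamma'\in(0,\gamma)$ via Theorem~\ref{independence of time lag 2} (for which the hypothesis $u\in A_\infty^+(\gamma)$ is precisely what is needed); the extra temporal room between $\gamma'$ and $\gamma$ decouples the coverage condition from the disjointness condition. I would then adapt the forward-in-time covering lemma of Kinnunen--Saari \cite[Section~3]{ks(apde-2016)}, as revisited in \cite{km(am-2024)}, to the fractional two-weight framework in the spirit of \cite{fmo(tams-2011)}; any bounded dilation of the selected rectangles needed to cover $E_\lambda$ is absorbed via the forward-in-time doubling-type properties of $u^q$ that follow from $u\in A_\infty^+(\gamma)$.
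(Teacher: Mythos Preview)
Your sufficiency argument is correct and matches the paper's. The gap is in the necessity, specifically in the sentence ``extracting a subfamily $\{R_j\}$ such that $E_\lambda\subset\bigcup_j R_j^-(\gamma)$ up to a $u^q$-null set, while the upper parts $\{R_j^+(\gamma)\}$ are pairwise disjoint.'' No such subfamily exists in general, and none of the covering lemmas in \cite{ks(apde-2016),km(am-2024),fmo(tams-2011)} produce one. Passing to a smaller lag $\gamma'<\gamma$ does not help: two rectangles with the same spatial cube and slightly offset time centres can have disjoint lower parts yet heavily overlapping upper parts at \emph{any} lag, and small rectangles can sit entirely inside the upper part of a large one. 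Since your summation step explicitly relies on disjointness (to pass from $\sum_j\int_{R_j^+(\gamma)}|f|^rv^r$ to a single integral over $\mathbb{R}^{n+1}$), the argument as written breaks here.

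What the paper actually does is replace disjointness by \emph{bounded overlap of tailored subsets}: after reducing to a compact subset of the two-sided level set $\{\lambda<M_{\beta,a}^{\gamma+}(f)\le 2\lambda\}$ (so that every selected rectangle also satisfies the \emph{upper} bound $|R_i^+(\gamma)|^\beta\fint_{R_i^+(\gamma)}|f|\le 2\lambda$), it runs a two-stage selection giving finitely many $R_i$ whose $5$-fold dilates cover the set, and then constructs $F_i\subset R_i^+(\gamma)$ with $\int_{F_i}|f|\ge\tfrac12\int_{R_i^+(\gamma)}|f|$ and $\sum_i\mathbf{1}_{F_i}\le C_{n,p,\gamma}$. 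The upper bound $2\lambda$ is exactly what makes the $F_i$ construction work (it lets one show that the ``overlapping'' rectangles contribute a controlled multiple of $\int_{R_i^+(\gamma)}|f|$), and your outline omits this reduction entirely. The hypothesis $u\in A_\infty^+(\gamma)$ is used at the end, not to dilate the covering, but through Corollary~\ref{independence of time lag} to convert the $TA_{r,q}^+(\gamma/5^p)$ constant (which is what naturally appears after the $5$-fold dilation) back to the $TA_{r,q}^+(\gamma)$ constant.
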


\begin{proof}
We first show the sufficiency. Assume that
\eqref{20240725.1724} holds for any $f\in
L^r(\mathbb{R}^{n+1},v^r)$. Fix
$R\in\mathcal{R}_p^{n+1}$. From Definition
\ref{parabolic maximal operators}, we deduce
that, for any $\lambda\in(0,|R^+(\gamma)|^\beta
(v^{-r'})_{R^+(\gamma)})$ and for almost every $(x,t)\in
R^-(\gamma)$,
\begin{align*}
\lambda<\left|R^+(\gamma)\right|^\beta
\left(v^{-r'}\right)_{R^+(\gamma)}=
\left|R^+(\gamma)\right|^\beta
\fint_{R^+(\gamma)}v^{-r'}\leq
M^{\gamma+}_\beta\left(v^{-r'}\boldsymbol{1}
_{R^+(\gamma)}\right)(x,t),
\end{align*}
which further implies that
$R^-(\gamma)\subset\{M^{\gamma+}_\beta(v^{-r'}
\boldsymbol{1}_{R^+(\gamma)})>\lambda\}$
up to a set of measure zero. Combining this and
\eqref{20240725.1724}, we obtain
\begin{align*}
\int_{R^-(\gamma)}u^q&\leq
\left(u^q\right)\left(\left\{M^{\gamma+}_\beta\left(v^{-r'}
\boldsymbol{1}_{R^+(\gamma)}\right)>\lambda\right\}\right)
\leq\frac{C}{\lambda^q}\left[\int
_{R^+(\gamma)}v^{-r'}\right]^{\frac{q}{r}}.
\end{align*}
Letting $\lambda\to|R^+(\gamma)|^\beta
(v^{-r'})_{R^+(\gamma)}$, dividing both
sides by $|R^+(\gamma)|$, and using $\frac{1}{r}-\frac{1}{q}=\beta$, we find that
\begin{align*}
\fint_{R^-(\gamma)}u^q\left[\fint_{R^+(\gamma)}v^{-r'}\right]
^{\frac{q}{r'}}\leq C.
\end{align*}
Taking the supremum over all $R\in\mathcal{R}_p^{n+1}$,
we conclude that $(u,v)\in TA_{r,q}^+(\gamma)$ and
$[u,v]_{TA_{r,q}^+(\gamma)}\leq C$, which completes the
proof of the sufficiency.

Next, we prove the necessity. Let $(u,v)\in
TA_{r,q}^+(\gamma)$, $f\in L^r(\mathbb{R}^{n+1},v^r)$, and
$\lambda\in(0,\infty)$. We need to show that
\eqref{20240725.1724} holds. To this end, we divide the
proof into the following five steps.

\emph{Step 1.} In this step, we make the following
simplification.
\begin{enumerate}
\item[(i)] We may assume that $f$ is bounded and has compact
support. Indeed, for any $k\in\mathbb{N}$, let
\begin{align*}
f_k:=\max\left\{|f|,\,k\right\}\mathbf{1}_{R(\mathbf{0},0,k)}.
\end{align*}
Then, for any $k\in\mathbb{N}$, $f_k$ is bounded and has
compact support and $f_k\to f$ almost
everywhere on $\mathbb{R}^{n+1}$
as $k\to\infty$. From this and the
monotone convergence theorem, it follows that
\begin{align}\label{20240725.1904}
\int_{\mathbb{R}^{n+1}}|f_k|^rv^r\to\int_{\mathbb{R}^{n+1}}
|f|^rv^r
\end{align}
as $k\to\infty$. In addition, by an argument similar to that
used in the proof of \cite[Lemma 3.30]{cf-Variable Lebesgue Spaces}
and the monotone convergence theorem again, we find that
\begin{align*}
\left(u^q\right)\left(\left\{M^{\gamma+}_\beta(f_k)>\lambda
\right\}\right)\to\left(u^q\right)\left(\left\{M^{\gamma+}
_\beta(f)>\lambda\right\}\right)
\end{align*}
as $k\to\infty$. From this and \eqref{20240725.1904},
we deduce that,
to prove \eqref{20240725.1724} for $f$, it suffices to show
that \eqref{20240725.1724} holds for any $f_k$ with
$k\in\mathbb{N}$ and the positive constant $C$
independent of $k$, $\lambda$, and $f$.

\item[(ii)] We may assume that both $u^q$
and $v^r$ have a lower bound $A$ for some $A\in(0,\infty)$.
Indeed, applying
Proposition \ref{max and min}, we conclude that $(\max\{u,\,
A^\frac{1}{q}\},\max\{v,\,A^\frac{1}{r}\})\in
TA_{r,q}^+(\gamma)$. If we have
\begin{align*}
\left(\max\{u^q,\,A\}\right)
\left(\left\{M^{\gamma+}_\beta(f)>\lambda\right\}\right)
\lesssim\frac{1}{\lambda^q}\left[\int_{\mathbb{R}^{n+1}}|f|^r
\max\left\{v^r,\,A\right\}\right]^{\frac{q}{r}},
\end{align*}
where the implicit positive constant is independent of
$A$, $\lambda$, and $f$, then we obtain
\eqref{20240725.1724} via letting $A\to0$ and taking the
supremum over $\lambda\in(0,\infty)$.

\item[(iii)] Fix $a\in(0,1)$. Let $M_{\beta,a}^{\gamma+}$
denote the \emph{truncated uncentered parabolic forward in
time maximal operator}, that is,
for any $g\in L_{\mathrm{loc}}^1(\mathbb{R}^{n+1})$ and
$(x,t)\in\mathbb{R}^{n+1}$,
\def\gfz{\genfrac{}{}{0pt}{}}
\begin{align*}
M_{\beta,a}^{\gamma+}(g)(x,t):=\sup_{\genfrac{}{}{0pt}{}{R\in\mathcal{R}
_p^{n+1}}{(x,t)\in R^-(\gamma),\,l(R)\in[a,\infty)}}
\left|R^+(\gamma)\right|^\beta\fint_{R^+(\gamma)}|g|.
\end{align*}
To prove \eqref{20240725.1724},
we only need to show that
\begin{align}\label{20240725.2114}
\left(u^q\right)\left(\left\{M_{\beta,a}^{\gamma+}(f)>\lambda
\right\}\right)\lesssim\frac{1}{\lambda^q}
\left(\int_{\mathbb{R}^{n+1}}|f|^rv^r\right)^{\frac{q}{r}},
\end{align}
where the implicit positive constant is independent of
$a$, $\lambda$, and $f$. Indeed, in \eqref{20240725.2114},
letting $a\to0$ and taking the supremum over
$\lambda\in(0,\infty)$, we then obtain \eqref{20240725.1724}.

\item[(iv)] To prove \eqref{20240725.2114},
it suffices to show that
\begin{align}\label{20240725.2120}
\left(u^q\right)\left(\left\{\lambda<M_{\beta,a}^{\gamma+}(f)
\leq2\lambda\right\}\right)
\leq\frac{1}{\lambda^q}\left(\int_{\mathbb{R}^{n+1}}|f|^rv^r
\right)^{\frac{q}{r}},
\end{align}
where the implicit positive constant is independent of
$a$, $\lambda$, and $f$. Indeed, from \eqref{20240725.2120},
we infer that
\begin{align*}
\left(u^q\right)\left(\left\{M_{\beta,a}^{\gamma+}(f)>\lambda
\right\}\right)&\leq\sum_{k\in\mathbb{Z}_+}\left(u^q\right)
\left(\left\{2^k\lambda<M_{\beta,a}^{\gamma+}(f)\leq
2^{k+1}\lambda\right\}\right)\\
&\lesssim\sum_{k\in\mathbb{Z}_+}\frac{1}{2^{kq}\lambda^q}
\left(\int_{\mathbb{R}^{n+1}}|f|^rv^r\right)^{\frac{q}{r}}
\sim\frac{1}{\lambda^q}
\left(\int_{\mathbb{R}^{n+1}}|f|^rv^r\right)^{\frac{q}{r}}
\end{align*}
and hence \eqref{20240725.2114} holds.

\item[(v)] Fix any compact set
$K\subset\{\lambda<M_{\beta,a}^{\gamma+}(f)
\leq2\lambda\}$. Then, to prove \eqref{20240725.2120},
it suffices to show
\begin{align}\label{20240725.2122}
\left(u^q\right)(K)\lesssim\frac{1}{\lambda^q}
\left(\int_{\mathbb{R}^{n+1}}|f|^rv\right)^{\frac{q}{r}},
\end{align}
where the implicit positive constant is independent of
$K$, $a$, $\lambda$, and $f$. Indeed, using the inner
regularity of the Lebesgue measure (see, for instance,
\cite[Theorem~2.14(d)]{rudin}) and taking the supremum
over all compact subsets of
$\{\lambda<M_{\beta,a}^{\gamma+}(f)\leq2\lambda\}$, we
then obtain \eqref{20240725.2120}.
\end{enumerate}

\emph{Step 2.} In this step, we aim to prove that
there exist $N\in\mathbb{N}$ and
a sequence $\{P_i\}_{i=1}^N$ of
parabolic rectangles such that
\begin{align}\label{20240725.2143}
\left(u^q\right)(K)\leq2\sum_{i=1}^{N}\left(u^q\right)
\left(P_i^-(\alpha)\right),
\end{align}
where $\alpha:=\frac{\gamma}{5^p}$. To
begin with, from the definitions of both $K$ and
$M_{\beta,a}^{\gamma+}$, we deduce that, for any given
$(x,t)\in K$, there exists $R_{(x,t)}\in\mathcal{R}_p^{n+1}$
such that $(x,t)\in R_{(x,t)}^-(\gamma)$,
$l(R_{(x,t)})\in[a,\infty)$, and
\begin{align}\label{20240725.2302}
\lambda<\left|R^+_{(x,t)}(\gamma)\right|^\beta
\fint_{R_{(x,t)}^+(\gamma)}|f|\leq2\lambda.
\end{align}
Using the assumption on $f$ in (i) of Step 1,
we find that $f\in L^1(\mathbb{R}^{n+1})$.
This, together with \eqref{20240725.2302}, further implies that
\begin{align*}
2^n(1-\gamma)\left[l(R_{(x,t)})\right]^{n+p}=
\left|R_{(x,t)}^+(\gamma)\right|
<\left[\frac{1}{\lambda}\int_{R_{(x,t)}^+(\gamma)}|f|\right]
^{\frac{1}{1-\beta}}
\leq\left[\frac{\|f\|_{L^1(\mathbb{R}^{n+1})}}{\lambda}\right]
^{\frac{1}{1-\beta}}<\infty.
\end{align*}
Combining this and the fact that $l(R_{(x,t)})\in[a,\infty)$,
we conclude that
\begin{align}\label{20240726.1137}
a\leq l\left(R_{(x,t)}\right)
\leq\left[\frac{\|f\|_{L^1(\mathbb{R}^{n+1})}}
{\lambda}\right]^{\frac{1}{(n+p)(1-\beta)}}
\left[\frac{1}{2^n(1-\gamma)}\right]^{\frac{1}{n+p}}.
\end{align}
Let $P_{(x,t)}:=5R_{(x,t)}$. Here and thereafter, for any
$R:=\prod_{i=1}^{n}[y_i-L_i,y_i+L_i]\times(s-L^p,s+L^p)\subset
\mathbb{R}^{n+1}$
with $\{y_i\}_{i=1}^{n}\subset\mathbb{R}$,
$\{L_i\}_{i=1}^{n}\subset(0,\infty)$, $s\in\mathbb{R}$, and
$L\in(0,\infty)$ and for any $\Lambda\in(0,\infty)$, define
$\Lambda R:=\prod_{i=1}^{n}[y_i-\Lambda L_i,y_i+\Lambda
L_i]\times(s-(\Lambda L)^p,s+(\Lambda L)^p)$. Then it is easy
to verify that $R_{(x,t)}^+(\gamma)\subset
P_{(x,t)}^+(\alpha)$ and $R_{(x,t)}^-(\gamma)\subset
P_{(x,t)}^-(\alpha)$. In addition, from the fact
that $K$ is compact and \eqref{20240726.1137}, it follows that
$\bigcup_{(x,t)\in K}P_{(x,t)}$ is bounded and hence $u^q$ is
integrable on $\bigcup_{(x,t)\in K}P_{(x,t)}$. Applying this,
the absolute continuity of the Lebesgue integral,
\eqref{20240726.1137}, and (ii) in Step 1, we find that there
exists $\epsilon\in(0,1)$ such that, for any $(x,t)\in K$,
\begin{align*}
&\left(u^q\right)\left((1+\epsilon)P_{(x,t)}^-(\alpha)\setminus
P_{(x,t)}^-(\alpha)\right)\\
&\quad\leq A2^n5^{n+p}a^{n+p}(1-\alpha)
\leq A\left|P_{(x,t)}^-(\alpha)\right|
\leq(u^q)\left(P_{(x,t)}^-(\alpha)\right),
\end{align*}
which further implies that
\begin{align}\label{20240726.1148}
\left(u^q\right)\left((1+\epsilon)P_{(x,t)}^-(\alpha)\right)
\leq2\left(u^q\right)\left(P_{(x,t)}^-(\alpha)\right).
\end{align}

On the other hand, let
\begin{align}\label{20240731.2156}
\widetilde{\epsilon}:=\min\left\{
\left(\frac{1-\alpha}{2}\right)^{\frac{1}{p}}
\left[(1+\epsilon)^p-1\right]^{\frac{1}{p}},\,\epsilon\right\}.
\end{align}
From the finite covering theorem and the fact that
\begin{align*}
K\subset\bigcup_{(x,t)\in K}
\mathrm{int\,}(R(x,t,5a\widetilde{\epsilon})),
\end{align*}
we deduce that there exist
$N_1\in\mathbb{N}$ and $\{(x_k,t_k)\}_{k=1}^{N_1}\subset K$
such that
\begin{align}\label{20240726.2120}
K\subset\bigcup_{k=1}^{N_1}\mathrm{int\,}(R(x_k,t_k,
5a\widetilde{\epsilon}))\subset\bigcup_{k=1}^{N_1}
R(x_k,t_k,5a\widetilde{\epsilon}),
\end{align}
where, for any $E\subset\mathbb{R}^{n+1}$, $\mathrm{int\,}(E)$
denotes the interior of $E$.

Now, we select a subsequence of $\{R_{(x_k,t_k)}\}
_{k=1}^{N_1}$ via two steps. Assume that, for any
$k\in\mathbb{N}\cap[1,N_1]$, $R_{(x_k,t_k)}:=R(y_k,s_k,L_k)$
with $(y_k,s_k)\in\mathbb{R}^{n+1}$ and $L_k\in(0,\infty)$.
Without loss of generality, we may assume that
the tops of $\{R_{(x_k,t_k)}\}_{k=1}^{N_1}$ are monotonically
descending, that is, for any $k,j\in\mathbb{N}\cap[1,N_1]$
with $k\leq j$, $s_k+L_k^p\geq s_j+L_j^p$; otherwise, we can
rearrange $\{R_{(x_k,t_k)}\}_{k=1}^{N_1}$ in terms of the
$t$-coordinates of their tops.

Then we make the
first selection inductively. Select $R_{(x_1,t_1)}$ and denote
it by $R_{(x_{k_1},t_{k_1})}$. Suppose that we have selected
the subsequence $\{R_{(x_{k_i},t_{k_i})}\}_{i=1}^{m}$
of $\{R_{(x_k,t_k)}\}_{k=1}^{N_1}$, where $m\in\mathbb{N}
\cap[1,N_1)$ and $k_m<N_1$. Let
\begin{align*}
j_m:=\min\left\{j\in\mathbb{N}:\ k_m+j\leq N_1\mbox{ and }
\left(x_{k_m+j},t_{k_m+j}\right)\notin\bigcup_{i=1}^{m}
P_{(x_{k_i},t_{k_i})}^-(\alpha)\right\}
\end{align*}
with the convention $\inf\emptyset=\infty$.
If $j_m\in\mathbb{N}$, then select
$R_{(x_{k_m+j_m},t_{k_m+j_m})}$ and denote it by
$R_{(x_{k_{m+1}},t_{k_{m+1}})}$; otherwise, we terminate the
selection process. In this manner, we have obtained a
subsequence $\{R_{(x_{k_i},t_{k_i})}\}_{i=1}^{N_2}$ of
$\{R_{(x_k,t_k)}\}_{k=1}^{N_1}$, where $N_2\in\mathbb{N}
\cap[1,N_1]$. This finishes the first step of selection
process.

Without loss of generality, we may assume that the
edge lengths of $\{R_{(x_{k_i},t_{k_i})}\}_{i=1}^{N_2}$ are
monotonically decreasing; otherwise, we can rearrange
$\{R_{(x_{k_i},t_{k_i})}\}_{i=1}^{N_2}$ in terms of their edge
lengths. Then we make the second selection inductively.
Select $R_{(x_{k_1},t_{k_1})}$ and denote it by
$R_{(x_{k_{r_1}},t_{k_{r_1}})}$. Assume that we have selected
the subsequence $\{R_{(x_{k_{r_i}},t_{k_{r_i}})}\}_{i=1}^m$
of $\{R_{(x_{k_i},t_{k_i})}\}_{i=1}^{N_2}$, where
$m\in\mathbb{N}\cap[1,N_2)$ and $k_{r_m}<N_2$. Let
\begin{align*}
j_{r_m}:=\min\left\{j\in\mathbb{N}:\ k_{r_m}+j\leq N_2
\mbox{ and }P_{(x_{k_{r_m}+j},t_{k_{r_m}+j})}
^-(\alpha)\nsubset\bigcup_{i=1}^m
P_{(x_{k_{r_i}},t_{k_{r_i}})}^-(\alpha)\right\}
\end{align*}
with the convention $\inf\emptyset=\infty$.
If $j_{r_m}\in\mathbb{N}$, then select
$R_{(x_{k_{r_m+}+j_{r_m}},t_{k_{r_m}+j_{r_m}})}$ and denote
it by $R_{(x_{k_{r_{m+1}}},t_{k_{r_{m+1}}})}$; otherwise, we
stop the selection process. By this way, we have obtained a
subsequence $\{R_{(x_{k_{r_i}},t_{k_{r_i}})}\}_{i=1}^N$ of
$\{R_{(x_{k_i},t_{k_i})}\}_{i=1}^{N_2}$, where
$N\in\mathbb{N}\cap[1,N_2]$, which completes the second step
of selection process. For convenience, for any $i\in\mathbb{N}
\cap[1,N]$, we simply write $R_{(x_{k_{r_i}},t_{k_{r_i}})}$ as
$R_i$. In conclusion, we have selected a subsequence
$\{R_i\}_{i=1}^N$ of $\{R_{(x_k,t_k)}\}_{k=1}^{N_1}$.

According to the above selection, we conclude that the
following statements hold.
\begin{enumerate}
\item[(i)] For any $i,j\in\mathbb{N}\cap[1,N]$ with $i\neq j$,
$R_i^-(\gamma)\nsubset R_j^-(\gamma)$, which is a
direct consequence of the first selection.

\item[(ii)] For any $k\in\mathbb{N}\cap[1,N_1]$, there exists
$i\in\mathbb{N}\cap[1,N]$ such that $(x_k,t_k)\in
P_i^-(\alpha)$. Indeed, if
$R_{(x_k,t_k)}\notin\{R_{(x_{k_i},t_{k_i})}\}
_{i=1}^{N_2}$, then, based on the first
selection, there exists $i\in\mathbb{N}\cap[1,N_2]$ such that
$(x_k,t_k)\in P_{(x_{k_i},t_{k_i})}^-(\alpha)$. If
$R_{(x_{k_i},t_{k_i})}\notin\{R_i\}_{i=1}^N$, then, in light
of the second selection, we have
$P_{(x_{k_i},t_{k_i})}^-(\alpha)\subset\bigcup_{i=1}^N
P_i^-(\alpha)$.

\item[(iii)] For any $k\in\mathbb{N}\cap[1,N_2]$, there
exists $i\in\mathbb{N}\cap[1,N]$ such that
\begin{align}\label{1957}
R(x_k,t_k,5a\widetilde{\epsilon})\subset(1+\epsilon)
P_i^-(\alpha).
\end{align}
Indeed, let $i\in\mathbb{N}\cap[1,N]$ be such that
\begin{align*}
(x_k,t_k)\in
P_i^-(\alpha)=P_{(x_{k_{r_i}},t_{k_{r_i}})}^-(\alpha)=
Q\left(y_{k_{r_i}},5L_{k_{r_i}}\right)\times
\left(s_{k_{r_i}}-\left(5L_{k_{r_i}}\right)^p,
s_{k_{r_i}}-\alpha\left(5L_{k_{r_i}}\right)^p\right).
\end{align*}
Then $\|x_k-y_{k_{r_i}}\|_{\infty}\in[0,5L_{k_{r_i}}]$
and $s_{k_{r_i}}-t_k\in(\alpha(5L_{k_{r_i}})^p,
(5L_{k_{r_i}})^p)$. From this, \eqref{20240726.1137},
and \eqref{20240731.2156}, we infer that, for any $(y,s)\in
R(x_k,t_k,5a\widetilde{\epsilon})$,
\begin{align*}
s&<t_k+(5a)^p\widetilde{\epsilon}^p
<s_{k_{r_i}}-\alpha\left(5L_{k_{r_i}}\right)^p
+(5a)^p\widetilde{\epsilon}^p\\
&\leq s_{k_{r_i}}+(\widetilde{\epsilon}^p-\alpha)
\left(5L_{k_{r_i}}\right)^p
\leq s_{k_{r_i}}-\frac{1+\alpha}{2}\left(5L_{k_{r_i}}\right)^p
+\frac{1-\alpha}{2}(1+\epsilon)^p\left(5L_{k_{r_i}}\right)^p,
\end{align*}
\begin{align*}
s&>t_k-(5a)^p\widetilde{\epsilon}^p
>s_{k_{r_i}}-\left(5L_{k_{r_i}}\right)^p
-(5a)^p\widetilde{\epsilon}^p\\
&\geq s_{k_{r_i}}-\left(5L_{k_{r_i}}\right)^p
(1+\widetilde{\epsilon}^p)
\geq s_{k_{r_i}}-\frac{1+\alpha}{2}\left(5L_{k_{r_i}}\right)^p
-\frac{1-\alpha}{2}(1+\epsilon)^p\left(5L_{k_{r_i}}\right)^p,
\end{align*}
and
\begin{align*}
\left\|y-y_{k_{r_i}}\right\|_\infty\leq\|y-x_k\|_\infty+
\left\|x_k-y_{k_{r_i}}\right\|_\infty\leq5a\widetilde{\epsilon}
+5L_{k_{r_i}}\leq5L_{k_{r_i}}(1+\epsilon),
\end{align*}
which further implies that
$(y,s)\in(1+\epsilon)P_i^-(\alpha)$. By the arbitrariness
of $(y,s)$, we obtain
$R(x_k,t_k,5a\widetilde{\epsilon})\subset(1+\epsilon)
P_i^-(\alpha)$.

\item[(iv)] For any given $k\in\mathbb{Z}$ and $R_i,R_j\in
\{R_i\}_{i=1}^N$ with $l(R_i),l(R_j)\in
(\frac{1}{2^{k+1}},\frac{1}{2^k}]$, we have
\begin{align}\label{1955}
R_i^-(\gamma)\cap
R_j^-(\gamma)=\emptyset.
\end{align}
We show this by
contradiction. Indeed, from the first selection, we deduce
that $R_i^-(\gamma)\nsubset P_j^-(\alpha)$ or
$R_j^-(\gamma)\nsubset P_i^-(\alpha)$.
Without loss of generality, we may assume that
$R_i^-(\gamma)\nsubset P_j^-(\alpha)$. Suppose that there
exist $k_0\in\mathbb{Z}$ and $R_i,R_j\in\{R_i\}_{i=1}^N$
satisfying $l(R_i),l(R_j)\in(\frac{1}{2^{k_0+1}},
\frac{1}{2^{k_0}}]$ and $R_i^-(\gamma)\cap
R_j^-(\gamma)\neq\emptyset$. Fix $(y_0,s_0)\in
R_i^-(\gamma)\cap R_j^-(\gamma)$. Then, for any $(y,s)\in
R_i^-(\gamma)=R_{(x_{k_{r_i}},t_{k_{r_i}})}^-(\gamma)$,
\begin{align*}
s_{k_{r_j}}-s\geq\gamma\left(L_{k_{r_j}}\right)^p=\alpha
\left(5L_{k_{r_j}}\right)^p,
\end{align*}
\begin{align*}
s-s_{k_{r_j}}&=s-s_0+s_0-s_{k_{r_j}}>s-s_0-
\left(L_{k_{r_j}}\right)^p\\
&>-(1-\gamma)\left(L_{k_{r_i}}\right)^p
-\left(L_{k_{r_j}}\right)^p
\geq-\left(2^p(1-\gamma)+1\right)\left(L_{k_{r_j}}\right)^p
>-\left(5L_{k_{r_j}}\right)^p,
\end{align*}
and
\begin{align*}
\left\|y-y_{k_{r_j}}\right\|_\infty\leq\|y-y_0\|_\infty+
\left\|y_0-y_{k_{r_j}}\right\|_\infty\leq2L_{k_{r_i}}
+L_{k_{r_j}}\leq5L_{k_{r_j}},
\end{align*}
which implies that $R_i^-(\gamma)\subset
P_j^-(\alpha)$. This contradicts
$R_i^-(\gamma)\nsubset P_j^-(\alpha)$.
Thus, \eqref{1955} holds.
\end{enumerate}
Using \eqref{20240726.2120} and \eqref{1957},
we conclude that
\begin{align*}
K\subset\bigcup_{k=1}^{N_1}R(x_k,t_k,5a\widetilde{\epsilon})
\subset\bigcup_{i=1}^N(1+\epsilon)P_i^-(\alpha),
\end{align*}
which, combined with \eqref{20240726.1148},
completes the proof of \eqref{20240725.2143} and hence Step 2.

\emph{Step 3.}
In this step, we prove that there exists a positive constant
$C$ such that, for any $i\in\mathbb{N}\cap[1,N]$,
\begin{align}\label{20240727.2207}
\sum_{j\in\Gamma_i}\int_{R_j^+(\gamma)}|f|\leq
C_1\int_{R_i^+(\gamma)}|f|,
\end{align}
where
$\Gamma_i:=\{j\in\mathbb{N}\cap[1,N]:\
R_i^+(\gamma)\cap R_j^+(\gamma)\neq\emptyset,\
l(R_j)<l(R_i)\}$.
Fix $i\in\mathbb{N}\cap[1,N]$ and let
\begin{align*}
\Gamma_{i,1}:=\left\{j\in \Gamma_i:\ R_j^+(\gamma)\nsubset
R_i^+(\gamma)\right\}
\ \ \mbox{and}\ \
\Gamma_{i,2}:=\left\{j\in \Gamma_i:\ R_j^+(\gamma)\subset
R_i^+(\gamma)\right\}.
\end{align*}
Then $\Gamma_{i,1}\cup\Gamma_{i,2}=\Gamma_{i}$ and
$\Gamma_{i,1}\cap\Gamma_{i,2}=\emptyset$.
To show \eqref{20240727.2207}, it suffices to prove that
there exist positive constants $C_2$ and $C_3$, depending only
on $n$, $p$, and $\gamma$, such that, for any $h\in\{1,2\}$,
\begin{align}\label{20240727.2217}
\sum_{j\in\Gamma_{i,h}}\left|R_j^+(\gamma)\right|\leq
C_{h+1}\left|R_i^+(\gamma)\right|.
\end{align}
Indeed, from this and \eqref{20240725.2302}, we infer that
\begin{align*}
\sum_{j\in\Gamma_i}\int_{R_j^+(\gamma)}|f|
&=\left(\sum_{j\in\Gamma_{i,1}}+\sum_{j\in\Gamma_{i,2}}\right)
\int_{R_j^+(\gamma)}|f|
\leq2\lambda\left(\sum_{j\in\Gamma_{i,1}}
+\sum_{j\in\Gamma_{i,2}}\right)\left|R_j^+(\gamma)\right|\\
&\leq2(C_2+C_3)\lambda\left|R_i^+(\gamma)\right|
\leq2(C_2+C_3)\int_{R_i^+(\gamma)}|f|
\end{align*}
and hence \eqref{20240727.2207} holds with $C_1:=2(C_2+C_3)$.

Next, we turn to show \eqref{20240727.2217}. Let
$k_0\in\mathbb{Z}$ be such that $l(R_i)\in(\frac{1}{2^{k_0+1}},
\frac{1}{2^{k_0}}]$. We first consider the case $h=1$.
We claim that, for any $k\in\mathbb{Z}\cap[k_0,\infty)$, there
exists a measurable set $E_k\subset\mathbb{R}^{n+1}$ such that
\begin{align*}
\bigcup_{j\in\Gamma_{i,1},\,l(R_j)
\in(\frac{1}{2^{k+1}},\frac{1}{2^k}]}R_j\subset E_k
\end{align*}
and
\begin{align*}
|E_k|\leq\frac{2\times3^n(3-\gamma)}{2^{kp}}[l(R_i)]^n
+\frac{2^33^{n-1}(2-\gamma)n}{2^k}[l(R_i)]^{n-1+p}.
\end{align*}
Indeed, fix $k\in\mathbb{Z}\cap[k_0,\infty)$. Denote the
$2n+2$ faces of $R_i^+(\gamma)$ by $\{S_m\}_{m=1}^{2n+2}$,
where $S_1$ and $S_2$ denote, respectively, the top and the
bottom of $R_i^+(\gamma)$. For any given $j\in\Gamma_{i,1}$
such that $l(R_j)\in(\frac{1}{2^{k+1}},\frac{1}{2^k}]$, since
$R_j^+(\gamma)\nsubset R_i^+(\gamma)$ and $R_j^+(\gamma)\cap
R_i^+(\gamma)\neq\emptyset$, it follows that $R_j^+(\gamma)$
intersects with the boundary of $R_i^+(\gamma)$, that is,
\begin{align}\label{20240728.1611}
R_j^+(\gamma)\cap\left(\bigcup_{m=1}^{2n+2}S_m\right)
\neq\emptyset.
\end{align}
For any $S_m$ with $m\in\mathbb{N}\cap[1,2n+2]$, there exists
a rectangle $E_{k,m}\subset\mathbb{R}^{n+1}$ such that
\begin{align*}
\bigcup_{\genfrac{}{}{0pt}{}{j\in\Gamma_{i,1}}{l(R_j)
\in(\frac{1}{2^{k+1}},\frac{1}{2^k}],\,R_j^+(\gamma)\cap
S_m\neq\emptyset}}R_j\subset E_{k,m}
\end{align*}
and
\begin{align*}
\left|E_{k,m}\right|=
\begin{cases}
(3-\gamma)\left[l(R_i)+2l(R_j)\right]^n
[l(R_j)]^p&\text{if }m\in\{1,2\},\\
2l(R_j)\left[l(R_i)+2l(R_j)\right]
^{n-1}\\
\quad\times\left\{(1-\gamma)[l(R_i)]^p
+(3-\gamma)[l(R_j)]^p\right\}&\text{if}\
m\in\mathbb{N}\cap[3,2n+2].
\end{cases}
\end{align*}
Applying this, \eqref{20240728.1611},
$l(R_j)<l(R_i)$, and $l(R_j)\in(\frac{1}{2^{k+1}},
\frac{1}{2^k}]$, we find that
\begin{align*}
\bigcup_{\genfrac{}{}{0pt}{}{j\in\Gamma_{i,1}}{l(R_j)
\in(\frac{1}{2^{k+1}},\frac{1}{2^k}]}}R_j=
\bigcup_{m=1}^{2n+2}\bigcup_{\genfrac{}{}{0pt}{}{j\in\Gamma_{i,1}}{l(R_j)
\in(\frac{1}{2^{k+1}},\frac{1}{2^k}],\,R_j^+(\gamma)\cap
S_m\neq\emptyset}}R_j\subset\bigcup_{m=1}^{2n+2}E_{k,m}=:E_k
\end{align*}
and
\begin{align*}
|E_k|&\leq\sum_{m=1}^{2n+2}\left|E_{k,m}\right|\leq
2[3l(R_i)]^n\frac{3-\gamma}{2^{kp}}+
2n\frac{2}{2^k}[3l(R_i)]^{n-1}(4-2\gamma)[l(R_i)]^p\\
&=\frac{2\times3^n(3-\gamma)}{2^{kp}}[l(R_i)]^n
+\frac{2^33^{n-1}(2-\gamma)n}{2^k}[l(R_i)]^{n-1+p}.
\end{align*}
This finishes the proof of the above claim.

Observe that,
for any $j\in\Gamma_{i,1}$, $l(R_j)<l(R_i)$ and hence
\begin{align*}
\Gamma_{i,1}=\bigcup_{k=k_0}^{\infty}\left\{j\in\Gamma_{i,1}:
\ l\left(R_j\right)\in\left(\frac{1}{2^{k+1}},
\frac{1}{2^k}\right]\right\}.
\end{align*}
From this, (iv) in Step 2, the above claim, and
$l(R_i)\in(\frac{1}{2^{k_0+1}},\frac{1}{2^{k_0}}]$,
we deduce that
\begin{align}\label{20240729.1539}
\sum_{j\in\Gamma_{i,1}}\left|R_j^+(\gamma)\right|&
=\sum_{k=k_0}^\infty\sum_{\{j\in\Gamma_{i,1}:\
l(R_j)\in(\frac{1}{2^{k+1}},
\frac{1}{2^k}]\}}\left|R_j^+(\gamma)\right|\\
&=\sum_{k=k_0}^\infty
\sum_{\{j\in\Gamma_{i,1}:\
l(R_j)\in(\frac{1}{2^{k+1}},
\frac{1}{2^k}]\}}\left|R_j^-(\gamma)\right|
\leq\sum_{k=k_0}^\infty|E_k|\notag\\
&\leq\sum_{k=k_0}^\infty
\left\{\frac{2\times3^n(3-\gamma)}{2^{kp}}[l(R_i)]^n
+\frac{2^33^{n-1}(2-\gamma)n}{2^k}[l(R_i)]^{n-1+p}
\right\}\notag\\
&=\frac{2^{2p+1}3^n(3-\gamma)}{2^p-1}[l(R_i)]^n
\frac{1}{2^{(k_0+1)p}}\notag\\
&\quad+2^53^{n-1}(2-\gamma)n[l(R_i)]^{n-1+p}
\frac{1}{2^{k_0+1}}\notag\\
&\leq\left[\frac{2^{2p+1}3^n(3-\gamma)}{2^p-1}
+2^53^{n-1}(2-\gamma)n\right][l(R_i)]^{n+p}\notag\\
&=\frac{2^{2p+1}3^n(3-\gamma)+2^53^{n-1}(2-\gamma)
(2^p-1)n}{2^n(1-\gamma)(2^p-1)}\left|R_i^+(\gamma)\right|\nonumber\\
&=:C_2\left|R_i^+(\gamma)\right|,\notag
\end{align}
which completes the proof of
\eqref{20240727.2217}
when $h=1$.

Now, we prove \eqref{20240727.2217}
in the case $h=2$. Let
\begin{align*}
\Omega_{k_0}:=\left\{j\in\Gamma_{i,2}:\
l(R_j)\in\left(\frac{1}{2^{k_0+1}},
\frac{1}{2^{k_0}}\right]\right\}.
\end{align*}
For any $k\in\mathbb{N}\cap[k_0+1,\infty)$, we define
$\Omega_k$ inductively by setting
\begin{align*}
\Omega_k:=\left\{j\in\Gamma_{i,2}:\
l(R_j)\in\left(\frac{1}{2^{k+1}},
\frac{1}{2^k}\right]\mbox{ and }
R_j^-(\gamma)\cap\bigcup_{d=k_0}^{k-1}\bigcup_{m\in\Omega_d}
R_m^-(\gamma)=\emptyset\right\}.
\end{align*}
Let $\Omega:=\bigcup_{k=k_0}^\infty\Omega_k$. Notice that
$\{\Omega_k\}_{k=k_0}^\infty$ are pairwise disjoint.
In addition, for any $j\in\Omega$,
we have $R_j^+(\gamma)\subset R_i^+(\gamma)$ and hence
$R_j^-(\gamma)\subset R_i$.
From these two facts and (iv) of Step 2, it follows that
\begin{align}\label{20240728.2258}
\sum_{j\in\Omega}\left|R_j^+(\gamma)\right|
&=\sum_{k=k_0}^\infty\sum_{j\in\Omega_k}
\left|R_j^+(\gamma)\right|
=\sum_{k=k_0}^\infty\sum_{j\in\Omega_k}
\left|R_j^-(\gamma)\right|\\
&=\sum_{k=k_0}^\infty
\left|\bigcup_{j\in\Omega_k}R_j^-(\gamma)\right|
=\left|\bigcup_{k=k_0}^\infty\bigcup_{j\in\Omega_k}
R_j^-(\gamma)\right|\notag\\
&\leq\left|\bigcup_{j\in\Gamma_{i,2}}R_j^-(\gamma)\right|\leq
\left|R_i^-(\gamma)\right|=\left|R_i^+(\gamma)\right|.\notag
\end{align}

Next, we show that, for any $m\in\Gamma_{i,2}\setminus\Omega$,
there exists $j\in\Omega$ such that $R_{j}^-(\gamma)\cap
R_m^-(\gamma)\neq\emptyset$, $R_m^-(\gamma)\nsubset
R_j^-(\gamma)$, and $l(R_{j})\in(l(R_m),\infty)$. Indeed,
let $k_{\mathrm{min}}\in[k_0,\infty)$ be the
smallest integer such that there exists
$j\in\Omega_{k_{\mathrm{min}}}$ satisfying $R_m^-(\gamma)\cap
R_j^-(\gamma)\neq\emptyset$. From (i) and (iv) in
Step 2, we deduce that $k_{\mathrm{min}}>k_0$ and
$R_m^-(\gamma)\nsubset R_j^-(\gamma)$. Moreover,
$l(R_{j})\in(l(R_m),\infty)$. Otherwise, by the definition
of $k_{\mathrm{min}}$ and (iv) in Step 2, we conclude that
there exists $k'\in\mathbb{N}\cap[k_0,k_{\mathrm{min}})$ such
that $m\in\Omega_{k'}$, which contradicts the assumption
that $m\in\Gamma_{i,2}\setminus\Omega$. Thus,
$l(R_{j})\in(l(R_m),\infty)$.

For any $j\in\Omega$, define
\begin{align*}
\widetilde{\Omega}_j:=\left\{m\in\Gamma_{i,2}\setminus\Omega:\
R_m^-(\gamma)\cap R_j^-(\gamma)\neq\emptyset,\
R_m^-(\gamma)\nsubset R_j^-(\gamma),
\mbox{ and }l(R_m)<l(R_j)\right\}.
\end{align*}
Let $j\in\Omega$.
Then there exists $\widetilde{k}\in\mathbb{Z}
\cap[k_0,\infty)$ such that $l(R_j)\in
(\frac{1}{2^{\widetilde{k}+1}},\frac{1}{2^{\widetilde{k}}}]$.
We claim that, for any $k\in\mathbb{Z}\cap
[\widetilde{k},\infty)$, there exists a
measurable set $\widetilde{E}_k\subset\mathbb{R}^{n+1}$ such
that
\begin{align}\label{2046}
\bigcup_{m\in\widetilde{\Omega}_j,\,l(R_m)
\in(\frac{1}{2^{k+1}},\frac{1}{2^k}]}R_m^-(\gamma)
\subset\widetilde{E}_k
\end{align}
and
\begin{align}\label{2047}
\left|\widetilde{E}_k\right|\leq\frac{2^23^n
(1-\gamma)}{2^{kp}}[l(R_j)]^n+\frac{2^23^n
(1-\gamma)n}{2^k}[l(R_j)]^{n-1+p}.
\end{align}
Indeed, fix $k\in\mathbb{Z}\cap[\widetilde{k},\infty)$. Denote
the $2n+2$ faces of $R_j^-(\gamma)$ by $\{\widetilde{S}_d\}
_{d=1}^{2n+2}$, where $\widetilde{S}_1$
and $\widetilde{S}_2$ denote, respectively, the top and the
bottom of $R_j^-(\gamma)$. For any given
$m\in\widetilde{\Omega}_j$ such that
$l(R_m)\in(\frac{1}{2^{k+1}},\frac{1}{2^k}]$, since
$R_m^-(\gamma)\nsubset R_j^-(\gamma)$ and $R_m^-(\gamma)\cap
R_j^-(\gamma)\neq\emptyset$, it follows that $R_m^-$
intersects with the boundary of $R_j^-(\gamma)$, that is,
\begin{align}\label{20240729.1507}
R_m^-(\gamma)\cap\left(\bigcup_{d=1}^{2n+2}
\widetilde{S}_d\right)\neq\emptyset.
\end{align}
For any $\widetilde{S}_d$ with $d\in\mathbb{N}\cap[1,2n+2]$,
there exists a rectangle $\widetilde{E}_{k,d}\subset
\mathbb{R}^{n+1}$ such that
\begin{align*}
\bigcup_{\genfrac{}{}{0pt}{}{m\in\widetilde{\Omega}_j}{l(R_m)
\in(\frac{1}{2^{k+1}},\frac{1}{2^k}],\,R_m^-(\gamma)\cap
S_d\neq\emptyset}}R_m^-(\gamma)\subset\widetilde{E}_{k,d}
\end{align*}
and
\begin{align*}
\left|\widetilde{E}_{k,d}\right|=
\begin{cases}
\left[l(R_j)+2l(R_m)\right]^n
2(1-\gamma)[l(R_m)]^p&\text{if }d\in\{1,2\},\\
2l(R_m)\left[l(R_j)+2l(R_m)\right]
^{n-1}\\
\quad\times\left\{(1-\gamma)[l(R_j)]^p
+2(1-\gamma)[l(R_m)]^p\right\}&\text{if }
d\in\mathbb{N}\cap[3,2n+2].
\end{cases}
\end{align*}
Applying this, \eqref{20240729.1507},
$l(R_m)<l(R_j)$, and $l(R_m)\in(\frac{1}{2^{k+1}},
\frac{1}{2^k}]$, we obtain
\begin{align*}
\bigcup_{\genfrac{}{}{0pt}{}{m\in\widetilde{\Omega}_j}{l(R_m)
\in(\frac{1}{2^{k+1}},\frac{1}{2^k}]}}R_m^-(\gamma)=
\bigcup_{d=1}^{2n+2}\bigcup_{\genfrac{}{}{0pt}{}{m\in
\widetilde{\Omega}_j}{l(R_m)
\in(\frac{1}{2^{k+1}},\frac{1}{2^k}],\,R_m^-(\gamma)\cap
S_d\neq\emptyset}}R_m^-(\gamma)\subset\bigcup_{d=1}^{2n+2}
\widetilde{E}_{k,d}=:\widetilde{E}_k
\end{align*}
and
\begin{align*}
\left|\widetilde{E}_k\right|&\leq\sum_{d=1}^{2n+2}
\left|\widetilde{E}_{k,d}\right|\leq
2\left[3l(R_j)\right]^n
\frac{2(1-\gamma)}{2^{kp}}+
2n\frac{2}{2^k}\left[3l(R_j)\right]
^{n-1}3(1-\gamma)\left[l(R_j)\right]^p\\
&=\frac{2^23^n(1-\gamma)}{2^{kp}}[l(R_j)]^n
+\frac{2^23^n(1-\gamma)n}{2^k}
[l(R_j)]^{n-1+p}.
\end{align*}
This finishes the proofs of \eqref{2046} and \eqref{2047}. From this,
the fact that
\begin{align*}
\widetilde{\Omega}_k=\bigcup_{k=\widetilde{k}}^\infty
\left\{m\in\widetilde{\Omega}_k:\
l(R_m)\in\left(\frac{1}{2^{k+1}},\frac{1}{2^k}\right]\right\},
\end{align*}
(iv) in Step 2, and $l(R_j)\in
(\frac{1}{2^{\widetilde{k}+1}},\frac{1}{2^{\widetilde{k}}}]$,
we infer that
\begin{align*}
\sum_{m\in\widetilde{\Omega}_j}\left|R_m^+(\gamma)\right|
&=\sum_{k=\widetilde{k}}^\infty
\sum_{\{m\in\widetilde{\Omega}
_j:\ l(R_m)\in(\frac{1}{2^{k+1}},\frac{1}{2^k}]\}}
\left|R_m^+(\gamma)\right|\\
&=\sum_{k=\widetilde{k}}^\infty
\sum_{\{m\in\widetilde{\Omega}
_j:\ l(R_m)\in(\frac{1}{2^{k+1}},\frac{1}{2^k}]\}}
\left|R_m^-(\gamma)\right|
\leq\sum_{k=\widetilde{k}}^\infty\left|\widetilde{E}_k\right|\\
&\leq\sum_{k=\widetilde{k}}^\infty
\left\{\frac{2^23^n(1-\gamma)}{2^{kp}}[l(R_j)]^n+\frac{2^2
3^n(1-\gamma)n}{2^k}
[l(R_j)]^{n-1+p}\right\}\\
&=\frac{2^{2p+2}3^n}{2^p-1}[l(R_j)]^n
\frac{1}{2^{(\widetilde{k}+1)p}}+2^43^nn
[l(R_j)]^{n-1+p}\frac{1}{2^{\widetilde{k}+1}}\\
&\leq\left[\frac{2^{2p+2}3^n(1-\gamma)}{2^p-1}
+2^43^n(1-\gamma)n\right][l(R_j)]^{n+p}\\
&=\frac{2^{2p+2}3^n(1-\gamma)
+2^43^n(1-\gamma)(2^p-1)n}{2^n(1-\gamma)(2^p-1)}
\left|R_j^+(\gamma)\right|\\
&=:\widetilde{C}_3\left|R_j^+(\gamma)\right|.
\end{align*}
Combining this and \eqref{20240728.2258}, we conclude that
\begin{align*}
\sum_{j\in\Gamma_{i,2}}\left|R_j^+(\gamma)\right|
&=\left(\sum_{j\in\Omega}+\sum_{m\in\Gamma_{i,2}
\setminus\Omega}\right)\left|R_j^+(\gamma)\right|
\leq\sum_{j\in\Omega}\left|R_j^+(\gamma)\right|
+\sum_{j\in\Omega}\sum_{m\in\widetilde{\Omega}_j}
\left|R_m^+(\gamma)\right|\\
&\leq\sum_{j\in\Omega}\left|R_j^+(\gamma)\right|
+\widetilde{C}_3\sum_{j\in\Omega}\left|R_j^+(\gamma)\right|
=\left(\widetilde{C}_3+1\right)\sum_{j\in\Omega}
\left|R_j^+(\gamma)\right|\\
&\leq\left(\widetilde{C}_3+1\right)
\left|R_i^+(\gamma)\right|=:C_3\left|R_i^+(\gamma)\right|,
\end{align*}
which completes the proof of
\eqref{20240727.2217} when
$h=2$. This, together with
\eqref{20240729.1539}, further implies that
\eqref{20240727.2217} holds and hence \eqref{20240727.2207}.

\emph{Step 4.} In this step, we
prove that there exists a positive constant $C_4$,
depending only on $n$ and $p$, such that, for any
$i\in\mathbb{N}\cap[1,N]$, there exists a measurable set
$F_i\subset R_i^+(\gamma)$ such that
\begin{align}\label{20240729.1551}
\frac{1}{|R_i^+(\gamma)|^{1-\beta}}\int_{F_i}|f|
\in\left(\frac{\lambda}{2},\infty\right)
\end{align}
and
\begin{align}\label{20240729.1552}
\sum_{i=1}^N\boldsymbol{1}_{F_i}\leq C_4.
\end{align}
In what follows, for any finite subset
$A\subset\mathbb{N}$, we denote the cardinality of $A$ by
$\sharp A$. For any given $i\in\mathbb{N}\cap[1,N]$,
we define $F_i$ by considering the following two cases on
$\sharp\Gamma_{i}$.

\emph{Case 1)}
$\sharp\Gamma_{i}\leq2\lceil C_1\rceil$.
In this case, let
$F_i:=R_i^+(\gamma)$.

\emph{Case 2)}
$\sharp\Gamma_{i}>2\lceil C_1\rceil$.
In this case, for any $k\in\mathbb{N}$, let
\begin{align*}
E_i^k:=R_i^+(\gamma)\cap\left\{\sum_{j\in\Gamma_i}
\boldsymbol{1}_{R_j^+(\gamma)}\in[k,\infty)\right\}.
\end{align*}
Notice that, for any $k,k'\in\mathbb{N}$
with $k<k'$, we have
\begin{align}\label{2113}
E_i^{k'}\subset E_i^k
\end{align}
and, for any $(x,t)\in R_i^+(\gamma)$,
\begin{align}\label{20240729.2142}
\sum_{k=1}^{2\lceil C_1\rceil}
\boldsymbol{1}_{E_i^k}(x,t)=\sum_{j\in\Gamma_i}
\boldsymbol{1}_{R_j^+(\gamma)}(x,t).
\end{align}
Then we define $F_i:=R_i^+(\gamma)\setminus E_i^{2\lceil
C_1\rceil}$.

We show that, for any $i\in\mathbb{N}\cap[1,N]$,
$F_i$ satisfies \eqref{20240729.1551}. We
consider the following two cases on $\sharp\Gamma_i$.

\emph{Case 1)}
$\sharp\Gamma_i\leq2\lceil C_1\rceil$.
In this case, from the
definition of $F_i=R_i^+(\gamma)$ and \eqref{20240725.2302},
it follows that
\begin{align*}
\frac{1}{|R_i^+(\gamma)|^{1-\beta}}\int_{F_i}|f|
=2\left|R^+(\gamma)\right|^\beta\fint_{R_i^+(\gamma)}|f|
\in(\lambda,\infty).
\end{align*}

\emph{Case 2)} $\sharp\Gamma_i>2\lceil C_1\rceil$.
In this case, using \eqref{2113}, $E_i^k\subset R_i^+(\gamma)$ for
any $k\in\mathbb{N}\cap[1,2\lceil C_1\rceil]$,
\eqref{20240729.2142}, and \eqref{20240727.2207}, we obtain
\begin{align*}
2\lceil C_1\rceil\int_{E_i^{2\lceil C_1\rceil}}|f|
&\leq\sharp\Gamma_i\int_{E_i^{2\lceil C_1\rceil}}|f|
\leq\sum_{k=1}^{2\lceil C_1\rceil}\int_{E_i^k}|f|\\
&=\int_{R_i^+(\gamma)}|f|\sum_{k=1}^{2\lceil C_1\rceil}
\boldsymbol{1}_{E_i^k}=\int_{R_i^+(\gamma)}|f|
\sum_{j\in\Gamma_i}\boldsymbol{1}_{R_j^+(\gamma)}\\
&\leq\sum_{j\in\Gamma_i}\int_{R_j^+(\gamma)}|f|\leq
C_1\int_{R_i^+(\gamma)}|f|
\leq2\lceil C_1\rceil\int_{R_i^+(\gamma)}|f|,
\end{align*}
which, together with \eqref{20240725.2302} and the definition of
$F_i$,
further implies that
\begin{align*}
\int_{F_i}|f|=\int_{R_i^+(\gamma)}|f|
-\int_{E_i^{2\lceil C_1\rceil}}|f|
\geq\frac{1}{2}\int_{R_i^+(\gamma)}|f|>\frac{\lambda}{2}
\left|R_i^+(\gamma)\right|^{1-\beta}.
\end{align*}
Combing the above two cases, we complete the proof of
\eqref{20240729.1551}.

Now, we turn to prove \eqref{20240729.1552}.
To this end, we first
show that, for any $(x,t)\in F_i$,
\begin{align}\label{20240729.2215}
\sum_{j\in\Gamma_i}\boldsymbol{1}_{R_j^+(\gamma)}(x,t)\leq
2\lceil C_1\rceil.
\end{align}
Indeed, if $\sharp\Gamma_i\leq2\lceil C_1\rceil$, then,
for any $(x,t)\in F_i$,
\begin{align}\label{20240729.2217}
\sum_{j\in\Gamma_i}\boldsymbol{1}_{R_j^+(\gamma)}(x,t)\leq
\sharp\Gamma_i\leq2\lceil C_1\rceil.
\end{align}
If $\sharp\Gamma_i>2\lceil C_1\rceil$, then, from
\eqref{20240729.2142}, we deduce that, for any $(x,t)\in F_i$,
\begin{align*}
\sum_{j\in\Gamma_i}\boldsymbol{1}_{R_j^+}(x,t)=
\sum_{k=1}^{2\lceil C_1\rceil}\boldsymbol{1}_{E_i^k}(x,t)
\leq2\lceil C_1\rceil,
\end{align*}
which, together with \eqref{20240729.2217},
completes the proof of
\eqref{20240729.2215}.

Then we prove that there exists a positive constant
$\widetilde{C}_4$, depending only on $n$, $p$, and $\gamma$,
such that, for any given $(x,t)\in\mathbb{R}^{n+1}$ and
$k\in\mathbb{Z}$,
\begin{align}\label{20240729.2257}
\sum_{i\in\mathbb{N}\cap[1,N],\,
l(R_i)\in(\frac{1}{2^{k+1}},
\frac{1}{2^k}]}\boldsymbol{1}_{R_i^+(\gamma)}(x,t)
\leq\widetilde{C}_4.
\end{align}
To do this, let
\begin{align*}
I_{(x,t)}^k:=\left\{i\in\mathbb{N}\cap[1,N]:\
l(R_i)\in\left(\frac{1}{2^{k+1}},\frac{1}{2^k}\right]
\mbox{ and }(x,t)\in R_i^+(\gamma)\right\}
\end{align*}
and
\begin{align*}
S_{(x,t)}:=Q\left(x,\frac{1}{2^{k-1}}\right)\times
\left(t-\frac{1}{2^{kp-1}},t+\frac{1}{2^{kp-1}}\right).
\end{align*}
Then, for any $(x,t)\in\mathbb{R}^{n+1}$,
\begin{align}\label{20240729.2234}
\bigcup_{i\in I_{(x,t)}^k}R_i^-(\gamma)\subset S_{(x,t)}.
\end{align}
Indeed, by the definition of $I_{(x,t)}^k$, we find that, for
any $i\in I_{(x,t)}^k$ and $(y,s)\in R_i^-(\gamma)$,
\begin{align*}
\|y-x\|_\infty\leq2l(R_i)\leq\frac{1}{2^{k-1}}
\ \ \mbox{and}\ \
|t-s|\leq2[l(R_i)]^p\leq\frac{1}{2^{kp-1}}.
\end{align*}
Therefore, $(y,s)\in S_{(x,t)}$ and hence \eqref{20240729.2234}
holds. From the definitions of $I_{(x,t)}^k$ and
$S_{(x,t)}$, (iv) in Step 2, and
\eqref{20240729.2234}, we infer that
\begin{align*}
\sum_{\genfrac{}{}{0pt}{}{i\in\mathbb{N}\cap[1,N]}{l(R_i)\in(\frac{1}{2^{k+1}},
\frac{1}{2^k}]}}\boldsymbol{1}_{R_i^+(\gamma)}(x,t)
&=\sum_{i\in I_{(x,t)}^k}1=\sum_{i\in I_{(x,t)}^k}
\frac{|R_i^-(\gamma)|}{2^n(1-\gamma)[l(R_i)]^{n+p}}\\
&\leq\frac{2^{(k+1)(n+p)-n}}{1-\gamma}\left|\bigcup_{i\in
I_{(x,t)}^k}R_i^-(\gamma)\right|
\leq\frac{2^{(k+1)(n+p)-n}}{1-\gamma}\left|S_{(x,t)}\right|\\
&=\frac{2^{(k+1)(n+p)-n}}{1-\gamma}
\left(\frac{2}{2^{k-1}}\right)^n
\frac{2}{2^{kp-1}}=\frac{2^{2n+p+2}}{1-\gamma}
=:\widetilde{C}_4,
\end{align*}
which completes the proof of \eqref{20240729.2257}.

Finally, we show \eqref{20240729.1552}.
For any $(x,t)\in\bigcup_{i=1}^N F_i$, there exists
$i_0\in\mathbb{N}\cap[1,N]$ such that $(x,t)\in F_{i_0}$ and
$R_{i_0}$ is the largest parabolic rectangle in the following
sense: for any $i\in\mathbb{N}\cap[1,N]$ satisfying $(x,t)\in
F_i$, $|R_i|\leq |R_{i_0}|$.
Let $k_0\in\mathbb{Z}$ be
such that $l(R_{i_0})\in(\frac{1}{2^{k_0+1}},
\frac{1}{2^{k_0}}]$. Applying the construction of $i_0$,
$F_i\subset R_i^+(\gamma)$ for any
$i\in\mathbb{N}\cap[1,N]$, the definition of $\Gamma_{i_0}$,
\eqref{20240729.2215}, and \eqref{20240729.2257}, we obtain
\begin{align*}
\sum_{i=1}^N\boldsymbol{1}_{F_i}
&=\left\{\sum_{\genfrac{}{}{0pt}{}{i\in\mathbb{N}\cap[1,N]}{l(R_i)\in
(\frac{1}{2^{k_0+1}},\frac{1}{2^{k_0}}]}}+
\sum_{\genfrac{}{}{0pt}{}{i\in\mathbb{N}\cap[1,N]}{l(R_i)\in
(\frac{1}{2^{k+1}},\frac{1}{2^k}]}}\right\}
\boldsymbol{1}_{F_i}\\
&\leq\left\{\sum_{\genfrac{}{}{0pt}{}{i\in\mathbb{N}\cap[1,N]}{l(R_i)\in
(\frac{1}{2^{k_0+1}},\frac{1}{2^{k_0}}]}}
+\sum_{i\in\Gamma_{i_0}}\right\}\boldsymbol{1}
_{R_i^+(\gamma)}
\leq\widetilde{C}_4+2\lceil C_1\rceil=:C_4.
\end{align*}
This finishes the proof of \eqref{20240729.1552} and hence Step 4.

\emph{Step 5.} In this step, we prove \eqref{20240725.2122} by
considering the following two cases on $r$.

\emph{Case 1)} $r\in(1,\infty)$. In this case, from
\eqref{20240725.2143}, \eqref{20240729.1551}, the H\"older
inequality, $F_i\subset R_i^+(\gamma)$,
$\alpha=\frac{\gamma}{5^p}$,
$P_i=5R_i$ for any $i\in\mathbb{N}\cap[1,N]$,
$\frac{1}{r}-\frac{1}{q}=\beta$, Definition
\ref{parabolic Muckenhoupt class}(i), and
\eqref{20240729.1552}, we deduce that
\begin{align}\label{20240729.1557}
\left(u^q\right)(K)&\leq2\sum_{i=1}^{N}\left(u^q\right)
\left(P_i^-(\alpha)\right)\leq\frac{2^{q+1}}{\lambda^q}
\sum_{i=1}^{N}\left(u^q\right)\left(P_i^-(\alpha)\right)
\left[\frac{1}{|R_i^+(\gamma)|^{1-\beta}}
\int_{F_i}|f|\right]^q\\
&\leq\frac{2^{q+1}}{\lambda^q}\sum_{i=1}^{N}
\frac{1}{|R_i^+(\gamma)|^{q-\beta q}}
\int_{P_i^-(\alpha)}u^q
\left(\int_{F_i}v^{-r'}\right)^{\frac{q}{r'}}
\left(\int_{F_i}|f|^rv^r\right)^{\frac{q}{r}}\notag\\
&\leq\frac{2^{q+1}5^{(n+p)(1+\frac{q}{r'})}
(1-\alpha)^{1+\frac{q}{r'}}}{(1-\gamma)^{1+\frac{q}{r'}}
\lambda^q}\sum_{i=1}^{N}
\fint_{P_i^-(\alpha)}u^q
\left[\fint_{P_i^+(\alpha)}v^{-r'}\right]
^{\frac{q}{r'}}\left(\int_{F_i}|f|^rv^r\right)
^{\frac{q}{r}}\notag\\
&\leq\frac{2^{q+1}5^{(n+p)(1+\frac{q}{r'})}(1-\alpha)
^{1+\frac{q}{r'}}[u,v]_{TA_{r,q}^+(\alpha)}}{(1-\gamma)
^{1+\frac{q}{r'}}\lambda^q}
\sum_{i=1}^{N}\left(\int_{F_i}|f|^rv^r\right)
^{\frac{q}{r}}\notag\\
&\leq\frac{2^{q+1}5^{(n+p)(1+\frac{q}{r'})}
(5^p-\gamma)^{1+\frac{q}{r'}}
[u,v]_{TA_{r,q}^+(\alpha)}C_4^{\frac{q}{r}}}{(1-\gamma)
^{1+\frac{q}{r'}}5^{p(1+\frac{q}{r'})}\lambda^q}
\left(\int_{\mathbb{R}^{n+1}}|f|^rv^r\right)^{\frac{q}{r}}.
\notag
\end{align}
This, together with Corollary \ref{independence of time lag},
finishes the proof of \eqref{20240725.2122}
in this case.

\emph{Case 2)} $r=1$. In this case, by
\eqref{20240725.2143}, \eqref{20240729.1551},
$\alpha=\frac{\gamma}{5^p}$, $P_i=5R_i$ for any
$i\in\mathbb{N}\cap[1,N]$,
$\frac{1}{r}-\frac{1}{q}=\beta$, Definition
\ref{parabolic Muckenhoupt class}(ii), and
\eqref{20240729.1552}, we conclude that
\begin{align*}
\left(u^q\right)(K)&\leq2\sum_{i=1}^{N}\left(u^q\right)
\left(P_i^-(\alpha)\right)\leq
\frac{2^{q+1}}{\lambda^q}\sum_{i=1}^{N}\left(u^q\right)
\left(P_i^-(\alpha)\right)\left[\frac{1}{|R_i^+(\gamma)|
^{1-\beta}}\int_{F_i}|f|\right]^q\\
&\leq\frac{2^{q+1}5^{n+p}(1-\alpha)}{(1-\gamma)\lambda^q}
\sum_{i=1}^{N}\left(u^q\right)_{P_i^-(\alpha)}
\left(\int_{F_i}|f|\right)^q\\
&\leq\frac{2^{q+1}5^{n+p}(1-\alpha)
[u,v]_{TA_{1,q}^+(\alpha)}}
{(1-\gamma)\lambda^q}\sum_{i=1}^{N}
\left(\int_{F_i}|f|v\right)^q\\
&\leq\frac{2^{q+1}5^{n+p}(5^p-\gamma)
[u,v]_{TA_{1,q}^+(\alpha)}C_4^q}
{5^p\lambda^q}\left(\int_{\mathbb{R}^{n+1}}|f|v\right)^q.
\end{align*}
This, together with Corollary \ref{independence of time lag},
finishes the proof of \eqref{20240725.2122} in this case.
Combining this and \eqref{20240729.1557}, we obtain
\eqref{20240725.2122}. This finishes the proof of the
necessity and hence Theorem \ref{weak type inequality}.
\end{proof}

\begin{remark}
Theorem~\ref{weak type inequality} when both $r=q$
and $u=v$
coincides with \cite[Theorem~6.1]{km(am-2024)}.
\end{remark}

The proof of Theorem \ref{weak type inequality} also works
for the case $\gamma=0$ and we present this result as follows.

\begin{theorem}\label{weak type inequality gamma=0}
Let $\beta\in[0,1)$, $1\leq r\leq q<\infty$,
$\beta=\frac{1}{r}-\frac{1}{q}$,
and $(u,v)$ be a pair of nonnegative functions on
$\mathbb{R}^{n+1}$. Then $(u,v)\in TA_{r,q}^+(0)$ if and only
$M^{0+}_\beta$ is bounded from $L^r(\mathbb{R}^{n+1},v^r)$ to
$L^{q,\infty}(\mathbb{R}^{n+1},u^q)$.
\end{theorem}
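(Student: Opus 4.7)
The plan is to observe that the proof of Theorem \ref{weak type inequality} carries over essentially verbatim to the case $\gamma=0$, with the crucial simplification that no change of time lag is needed. This is precisely why the $A_\infty^+$ hypothesis on $u$ can be dropped from the statement.

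For the sufficiency I would repeat the short argument from the proof of Theorem \ref{weak type inequality}: fix $R\in\mathcal{R}_p^{n+1}$, observe that $R^-(0)\subset\{M^{0+}_\beta(v^{-r'}\boldsymbol{1}_{R^+(0)})>\lambda\}$ up to a null set for every $\lambda<|R^+(0)|^\beta(v^{-r'})_{R^+(0)}$, apply the assumed weak-type inequality, let $\lambda$ tend to the upper bound, and divide by $|R^+(0)|$ using $\tfrac{1}{r}-\tfrac{1}{q}=\beta$. This yields $(u,v)\in TA_{r,q}^+(0)$.

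For the necessity I would follow the same five-step program as in the proof of Theorem \ref{weak type inequality}. Steps 1 (reduction to bounded compactly supported $f$ with positive lower bounds on $u^q$ and $v^r$, truncated maximal operators, and compact sets $K$), 3 (the Vitali-type packing estimate \eqref{20240727.2207}), and 4 (construction of the sets $F_i\subset R_i^+(0)$ with bounded overlap \eqref{20240729.1552}) go through unchanged, since the factors $(1-\gamma)$ that appear in various geometric constants simply become $1$ when $\gamma=0$. In Step 2 the covering argument is carried out with $\alpha:=0$ in place of $\alpha:=\gamma/5^p$, so the enlarged rectangles $P_{(x,t)}:=5R_{(x,t)}$ trivially satisfy $R_{(x,t)}^\pm(0)\subset P_{(x,t)}^\pm(0)$; the two successive inductive subselections and the properties (i)--(iv), in particular the disjointness claim \eqref{1955} for rectangles of comparable edge length, are rechecked with $\gamma$ set to zero throughout.

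The only substantive difference appears in Step 5. The original proof ends by bounding the right-hand side of \eqref{20240729.1557} by a constant times $[u,v]_{TA_{r,q}^+(\alpha)}$ evaluated on the dilated rectangles $P_i$, and then invokes Corollary \ref{independence of time lag} (which requires $u\in A_\infty^+(\gamma)$) to return to $[u,v]_{TA_{r,q}^+(\gamma)}$. For $\gamma=0$ we already have $\alpha=0$, so the step that invokes the $A_\infty^+$ hypothesis is vacuous, and the final bound is directly in terms of $[u,v]_{TA_{r,q}^+(0)}$; the endpoint case $r=1$ proceeds identically using Definition \ref{parabolic Muckenhoupt class}(ii). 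The main (and essentially only) obstacle is a careful bookkeeping verification that all the geometric selection arguments in Step 2 remain valid when the upper and lower parts of parabolic rectangles have zero temporal gap, but this is purely a matter of replacing each occurrence of $(1-\gamma)$ by $1$ in the volume and overlap estimates.
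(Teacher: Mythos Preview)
Your proposal is correct and matches the paper's own treatment: the paper simply states that ``the proof of Theorem \ref{weak type inequality} also works for the case $\gamma=0$'' without further details. You have correctly identified the key point that the $A_\infty^+$ hypothesis enters only through Corollary \ref{independence of time lag} in Step 5, and that this invocation becomes unnecessary when $\gamma=0$ since then $\alpha=\gamma/5^p=0$ already.
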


Notice that the condition $u\in
A_\infty^+(\gamma)$ is only used to show the necessity of
Theorem~\ref{weak type inequality}. Moreover, using
\cite[Lemma 2.1 (1)]{mhy(fm-2023)} and \cite[Lemma
7.4]{ks(na-2016)}, we find that, for any $1\leq r\leq
q<\infty$ and any nonnegative function $\omega$ on
$\mathbb{R}^{n+1}$, $\omega\in A_{r,q}^+(\gamma)$ implies that
$\omega\in A_\infty^+(\gamma)$. These, combined with Theorem
\ref{weak type inequality} with $u=v$, the self-improving
property of $A_{r,q}^+(\gamma)$ with $1<r\leq q<\infty$ (see
\cite[Lemma 2.2]{mhy(fm-2023)}), and the Stein--Weiss
interpolation theorem (see, for instance,
\cite[Corollary 5.5.2]{bl(intepolation-1976)}), further
implies the following corollary. We omit the details.

\begin{corollary}\label{weighted inequality uncentered}
Let $\gamma\in(0,1)$, $\beta\in[0,1)$, $1<r\leq q<\infty$,
$\beta=\frac{1}{r}-\frac{1}{q}$, and $\omega$ be a
weight on $\mathbb{R}^{n+1}$. Then $\omega\in
A_{r,q}^+(\gamma)$ if and only if $M^{\gamma+}_\beta$
is bounded from $L^r(\mathbb{R}^{n+1},\omega^r)$ to
$L^q(\mathbb{R}^{n+1},\omega^q)$.
\end{corollary}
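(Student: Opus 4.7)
The plan is to derive this corollary from the weak-type characterization in Theorem \ref{weak type inequality} via a Marcinkiewicz-type (Stein--Weiss) interpolation, with the self-improving property of $A_{r,q}^+(\gamma)$ serving as the bridge from one weak-type endpoint to two.

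For the \emph{necessity}, suppose $M_\beta^{\gamma+}$ is bounded from $L^r(\mathbb{R}^{n+1},\omega^r)$ to $L^q(\mathbb{R}^{n+1},\omega^q)$. Since strong-type boundedness trivially implies the corresponding weak-type boundedness, the sufficiency part of Theorem \ref{weak type inequality} applied with $u=v=\omega$ yields $(\omega,\omega)\in TA_{r,q}^+(\gamma)$, which is exactly $\omega\in A_{r,q}^+(\gamma)$. Note that this direction does not require the assumption $u\in A_\infty^+(\gamma)$, since that hypothesis was only invoked in the necessity part of Theorem \ref{weak type inequality}.

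For the \emph{sufficiency}, assume $\omega\in A_{r,q}^+(\gamma)$. Combining \cite[Lemma 2.1(1)]{mhy(fm-2023)} and \cite[Lemma 7.4]{ks(na-2016)} gives $\omega\in A_\infty^+(\gamma)$, so Theorem \ref{weak type inequality} with $u=v=\omega$ applies and produces the weak-type $(r,q)$ inequality from $L^r(\omega^r)$ to $L^{q,\infty}(\omega^q)$. To upgrade weak to strong, I would invoke the self-improving property \cite[Lemma 2.2]{mhy(fm-2023)} of $A_{r,q}^+(\gamma)$: there exists a small $\epsilon\in(0,r-1)$ such that $\omega\in A_{r-\epsilon,q_\epsilon}^+(\gamma)$, where $q_\epsilon$ is determined by $\frac{1}{r-\epsilon}-\frac{1}{q_\epsilon}=\beta$, and in particular $q_\epsilon<q$. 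Re-applying Theorem \ref{weak type inequality} at this improved pair produces a second weighted weak-type inequality at $(r-\epsilon, q_\epsilon)$, and the Stein--Weiss interpolation theorem \cite[Corollary 5.5.2]{bl(intepolation-1976)} then converts these two weak-type endpoints (which bracket $(r,q)$ along the fractional line $\frac{1}{r}-\frac{1}{q}=\beta$) into the desired strong-type $(r,q)$ boundedness on $L^r(\omega^r)$ and $L^q(\omega^q)$.

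The main technical point is the self-improving step: one must verify that the improved pair $(r-\epsilon,q_\epsilon)$ still lies in the admissible range $1<r-\epsilon\le q_\epsilon<\infty$ and respects the fractional balance $\frac{1}{r-\epsilon}-\frac{1}{q_\epsilon}=\beta$, so that Stein--Weiss interpolation is applicable with two genuine weighted weak-type endpoints. Once this is in place the corollary follows immediately from the two previous ingredients, and no new estimate beyond the machinery already established is needed.
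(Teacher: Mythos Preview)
Your approach matches the paper's exactly: it too derives the corollary from Theorem~\ref{weak type inequality} (with $u=v=\omega$), the self-improving property of $A_{r,q}^+(\gamma)$ from \cite[Lemma 2.2]{mhy(fm-2023)}, and Stein--Weiss interpolation \cite[Corollary 5.5.2]{bl(intepolation-1976)}.

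There is one imprecision in your interpolation step. The two weak-type endpoints you produce are $(r,q)$ and $(r-\epsilon,q_\epsilon)$; but then $(r,q)$ is itself an endpoint, not an interior point, and Stein--Weiss interpolation only yields the \emph{strong}-type bound at exponents strictly between the two weak-type endpoints. Your parenthetical ``which bracket $(r,q)$'' is therefore not satisfied by the pair you wrote down. The fix is to also self-improve on the other side of $(r,q)$: either cite the full two-sided open-endedness in \cite[Lemma 2.2]{mhy(fm-2023)}, or combine Corollary~\ref{self-improving property} (which gives $\omega\in A_{r,q+\delta}^+(\gamma)$) with Proposition~\ref{nested property}(i) to obtain $\omega\in A_{r_1,q_1}^+(\gamma)$ for some $r_1>r$, $q_1>q$ with $\frac{1}{r_1}-\frac{1}{q_1}=\beta$. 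Then interpolate between weak-type at $(r-\epsilon,q_\epsilon)$ and weak-type at $(r_1,q_1)$, with $(r,q)$ genuinely interior.
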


\begin{remark}
Corollary~\ref{weighted inequality uncentered} when both $r=q$
and $u=v$
coincides with \cite[Theorem 6.2]{km(am-2024)}.
\end{remark}

The following definition of centered parabolic fractional
maximal operators can be found in
\cite[p.\,187]{mhy(fm-2023)}.

\begin{definition}\label{centered parabolic maximal operator}
Let $\gamma,\beta\in[0,1)$. For any $f\in
L_{\mathrm{loc}}^{1}(\mathbb{R}^{n+1})$,
the \emph{centered forward in time parabolic fractional
maximal function $\mathcal{M}^{\gamma+}_\beta(f)$
with time lag} and the \emph{centered back
in time parabolic fractional maximal function
$\mathcal{M}^{\gamma-}_\beta(f)$ with time lag} of
$f$ are defined, respectively, by setting,
for any $(x,t)\in\mathbb{R}^{n+1}$,
\begin{align*}
\mathcal{M}^{\gamma+}_\beta(f)(x,t):=\sup_{L\in(0,\infty)}
\left|R(x,t,L)^+(\gamma)\right|^\beta
\fint_{R(x,t,L)^+(\gamma)}|f|
\end{align*}
and
\begin{align*}
\mathcal{M}^{\gamma-}_\beta(f)(x,t):=\sup_{L\in(0,\infty)}
\left|R(x,t,L)^-(\gamma)\right|^\beta
\fint_{R(x,t,L)^-(\gamma)}|f|.
\end{align*}
\end{definition}

At the end of this section,
we prove the following weak-type parabolic two-weighted
boundedness of centered
parabolic fractional maximal operators
with time lag.

\begin{theorem}\label{weak type inequality centered}
Let $\gamma,\beta\in[0,1)$, $1<r\leq q<\infty$,
$\beta=\frac{1}{r}-\frac{1}{q}$, and $(u,v)$ be a pair of
nonnegative functions on $\mathbb{R}^{n+1}$.
\begin{enumerate}
\item[\rm(i)] If $\gamma\in(0,1)$ and $u\in
A_\infty^+(\gamma)$, then $(u,v)\in TA_{r,q}^+(\gamma)$ if
and only if $\mathcal{M}^{\gamma+}_\beta$ is bounded from
$L^r(\mathbb{R}^{n+1},v^r)$ to
$L^{q,\infty}(\mathbb{R}^{n+1},u^q)$.

\item[\rm(ii)] $(u,v)\in TA_{r,q}^+(0)$ if and only
if $\mathcal{M}^{0+}_\beta$ is bounded from
$L^r(\mathbb{R}^{n+1},v^r)$ to
$L^{q,\infty}(\mathbb{R}^{n+1},u^q)$.
\end{enumerate}
\end{theorem}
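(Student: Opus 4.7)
The plan is to deduce both directions of each part from the corresponding uncentered results (Theorems \ref{weak type inequality} and \ref{weak type inequality gamma=0}) combined with a geometric comparison between centered parabolic rectangles and suitably placed uncentered ones. Sufficiency is immediate in both parts: for every $(x,t)\in\mathbb{R}^{n+1}$ and every $L\in(0,\infty)$, the parabolic rectangle $R(x,t,L)$ automatically lies in the family $\{R\in\mathcal{R}_p^{n+1}:\ (x,t)\in R^-(\gamma)\}$ used to define $M^{\gamma+}_\beta(f)(x,t)$, whence $\mathcal{M}^{\gamma+}_\beta(f)\leq M^{\gamma+}_\beta(f)$ pointwise and the desired weak-type bound for $\mathcal{M}^{\gamma+}_\beta$ is inherited from the bound for $M^{\gamma+}_\beta$ provided by Theorem \ref{weak type inequality} (in part (i), using $u\in A_\infty^+(\gamma)$) or Theorem \ref{weak type inequality gamma=0} (in part (ii)).

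For the necessity in part (ii), we fix an arbitrary $R=R(y,s,L_0)\in\mathcal{R}_p^{n+1}$ and test on $f:=v^{-r'}\boldsymbol{1}_{R^+(0)}$. A quick geometric check shows that, for every $(x,t)\in R^-(0)$, the centered rectangle $R(x,t,2L_0)$ satisfies $R^+(0)\subset R(x,t,2L_0)^+(0)$ with $|R(x,t,2L_0)^+(0)|=2^{n+p}|R^+(0)|$, so that
\[
\mathcal{M}^{0+}_\beta(f)(x,t)\geq 2^{(n+p)(\beta-1)}\,|R^+(0)|^\beta\fint_{R^+(0)}v^{-r'}.
\]
Inserting this pointwise lower bound into the assumed weak-type inequality at a level just below its right-hand side, recalling that $\|f\|_{L^r(v^r)}^r=\int_{R^+(0)}v^{-r'}$, and using the identity $q(1-\beta)-q/r'=1$ (a direct consequence of $\beta=\frac{1}{r}-\frac{1}{q}$) to cancel the powers of $|R^+(0)|$, we arrive at
\[
\fint_{R^-(0)}u^q\,\biggl[\fint_{R^+(0)}v^{-r'}\biggr]^{q/r'}\leq C',
\]
with $C'$ depending only on $n$, $p$, and the operator norm. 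Taking the supremum over $R$ then delivers $(u,v)\in TA_{r,q}^+(0)$.

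The main obstacle is the necessity in part (i), because a centered rectangle $R(x,t,L)$ with $(x,t)\in R^-(\gamma)$ cannot in general contain $R^+(\gamma)$ while respecting the upper bound $t+\gamma L^p\leq s+\gamma L_0^p$ when $\gamma$ is not small. Our remedy is to enlarge the time lag of the test set from $\gamma$ to some $\alpha\in(\gamma,1)$ and to shift the estimating set backward in time, then to recover the $TA_{r,q}^+(\gamma)$ condition via Theorem \ref{independence of time lag 2}. Specifically, we choose $\alpha\in(\gamma,1)$, $\tau\in[1,\infty)$, and $c\geq 2$, depending only on $n$, $p$, and $\gamma$, so that the admissibility system
\[
1-\alpha+\tau(1+\alpha)\leq c^p\leq\frac{\tau(1+\alpha)}{\gamma},\qquad c\geq 2,
\]
holds; this is always solvable by taking $\alpha$ close enough to $1$ (or $\tau$ large enough). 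With such parameters fixed and $R=R(\mathbf{0},0,L_0)$ (without loss of generality), a direct check shows that, for every $(x,t)$ in the shifted set $S:=R^+(\alpha)-(\mathbf{0},\tau(1+\alpha)L_0^p)$, the rectangle $R(x,t,cL_0)^+(\gamma)$ contains $R^+(\alpha)$ and has measure comparable to $|R^+(\alpha)|$.

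Testing on $f:=v^{-r'}\boldsymbol{1}_{R^+(\alpha)}$ then gives, for every $(x,t)\in S$, a pointwise lower bound of the form $\mathcal{M}^{\gamma+}_\beta(f)(x,t)\geq C_1|R^+(\alpha)|^\beta\fint_{R^+(\alpha)}v^{-r'}$. Inserting this into the assumed weak-type inequality and performing the same exponent cancellation as in the $\gamma=0$ case produces
\[
\fint_{S}u^q\,\biggl[\fint_{R^+(\alpha)}v^{-r'}\biggr]^{q/r'}\leq C''
\]
uniformly in $R$, which is exactly the condition in Theorem \ref{independence of time lag 2}(i) for the chosen $\alpha$ and $\tau$. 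The hypothesis $u\in A_\infty^+(\gamma)$ then allows us to invoke that theorem to conclude $(u,v)\in TA_{r,q}^+(\gamma)$. The hard part is the geometric bookkeeping in the admissibility system above; the rest is a verbatim rerun of the test-function argument already used for $M^{\gamma+}_\beta$.
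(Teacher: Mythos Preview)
Your treatment of the direction ``boundedness $\Rightarrow$ weight condition'' (what you call ``necessity'') is essentially correct and close in spirit to the paper's proof, though the paper uses a slightly different geometric placement (a forward translate $S^+(\gamma)$ of $R^-(\gamma)$, together with the centered rectangle of edge $2L$) before invoking Theorem~\ref{independence of time lag 2}. There is a small inaccuracy in your admissibility system: the upper constraint should read $\gamma c^p\leq \tau(1+\alpha)-(1-\alpha)$ rather than $c^p\leq \tau(1+\alpha)/\gamma$, since you need $t+\gamma(cL_0)^p\leq \alpha L_0^p$ for \emph{all} $t$ in the shifted set $S$, in particular for $t$ near its supremum $[1-\tau(1+\alpha)]L_0^p$. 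Solvability is unaffected.

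The genuine gap is in the other direction (``weight condition $\Rightarrow$ boundedness'', your ``sufficiency''). Your claim that $R(x,t,L)$ belongs to the family $\{R:\ (x,t)\in R^-(\gamma)\}$ is false: by Definition~\ref{parabolic rectangles}, $R(x,t,L)^-(\gamma)=Q(x,L)\times(t-L^p,\,t-\gamma L^p)$, whose time interval lies strictly below $t$ whenever $\gamma>0$. Hence the pointwise inequality $\mathcal{M}^{\gamma+}_\beta(f)\leq M^{\gamma+}_\beta(f)$ does not hold in general for $\gamma\in(0,1)$, and no admissible uncentered rectangle with the \emph{same} lag $\gamma$ can contain $R(x,t,L)^+(\gamma)$ while having $(x,t)$ in its lower part (the spatial constraint forces $l(R)\geq L$, and then the lower-edge time constraint $s+\gamma[l(R)]^p\leq t+\gamma L^p$ contradicts $(x,t)\in R^-(\gamma)$). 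The paper's remedy is to compare against the uncentered operator with a \emph{smaller} lag: setting $P:=R(x,\,t+\tfrac{\gamma}{2}L^p,\,L)$ one checks $(x,t)\in P^-(\gamma/4)$ and $R(x,t,L)^+(\gamma)\subset P^+(\gamma/4)$ with $|P^+(\gamma/4)|=\tfrac{1-\gamma/4}{1-\gamma}\,|R(x,t,L)^+(\gamma)|$, whence $\mathcal{M}^{\gamma+}_\beta(f)\leq K\,M^{\gamma/4+}_\beta(f)$. One then uses Corollary~\ref{independence of time lag}(i) (here the hypothesis $u\in A_\infty^+(\gamma)$ enters) to transfer $(u,v)\in TA_{r,q}^+(\gamma)$ to $(u,v)\in TA_{r,q}^+(\gamma/4)$, and finally applies Theorem~\ref{weak type inequality} at lag $\gamma/4$. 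For part~(ii) your argument survives: when $\gamma=0$ the failure of $(x,t)\in R(x,t,L)^-(0)$ is only a boundary issue and the inequality $\mathcal{M}^{0+}_\beta\leq M^{0+}_\beta$ holds after a routine limiting argument.
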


\begin{proof}
We first show the sufficiency of both (i) and (ii).
Let $\gamma\in[0,1)$.
Assume $\mathcal{M}^{\gamma+}_\beta$ is bounded from
$L^r(\mathbb{R}^{n+1},v^r)$ to
$L^{q,\infty}(\mathbb{R}^{n+1},u^q)$. Fix
$R:=R(x,t,L)\in\mathcal{R}_p^{n+1}$ with
$(x,t)\in\mathbb{R}^{n+1}$ and $L\in(0,\infty)$.
Define $S^+(\gamma):=R^-(\gamma)+(1-\gamma)L^p+2^p\gamma L^p$
and, for any $\epsilon\in(0,\infty)$, let
$f_\epsilon:=(v+\epsilon)^{-r'}\boldsymbol{1}_{S^+(\gamma)}$.
Then, for any $(y,s)\in R^-(\gamma)$, $S^+(\gamma)\subset
R(y,s,2L)^+(\gamma)$ and
$|R(y,s,2L)^+(\gamma)|=2^{n+p}|S^+(\gamma)|$. From this and
Definition \ref{parabolic maximal operators}, it follows
that, for any $\lambda\in(0,2^{(n+p)(\beta-1)}
|S^+(\gamma)|^\beta(f_\epsilon)_{S^+(\gamma)})$ and
$(y,s)\in R^-(\gamma)$,
\begin{align*}
\lambda<2^{(n+p)(\beta-1)}\left|S^+(\gamma)\right|^\beta
\left(f_\epsilon\right)_{S^+(\gamma)}
\leq\left|R(y,s,2L)^+(\gamma)\right|^\beta
\fint_{R(y,s,2L)^+(\gamma)}f_\epsilon\leq
M^{\gamma+}_\beta(f_\epsilon)(y,s),
\end{align*}
which further implies that
$R^-(\gamma)\subset\{M^{\gamma+}_\beta(f_\epsilon)
>\lambda\}$.
Combining this and the assumption that
$\mathcal{M}^{\gamma+}_\beta$ is bounded from
$L^r(\mathbb{R}^{n+1},v^r)$ to
$L^{q,\infty}(\mathbb{R}^{n+1},u^q)$, we obtain
\begin{align*}
\int_{R^-(\gamma)}u^q&=(u^q)(R^-(\gamma))\leq
(u^q)\left(\left\{M^{\gamma+}_\beta(f_\epsilon)
>\lambda\right\}\right)\\
&\leq\frac{C}{\lambda^q}\left(\int_{\mathbb{R}^{n+1}}
f_\epsilon^rv^r\right)^{\frac{q}{r}}
=\frac{C}{\lambda^q}\left[\int
_{S^+(\gamma)}(v+\epsilon)^{-r'r}v^r\right]^{\frac{q}{r}}.
\end{align*}
Letting $\lambda\to2^{(n+p)(\beta-1)}
|S^+(\gamma)|^\beta(f_\epsilon)_{S^+(\gamma)}$ and
$\epsilon\to0$, dividing both sides of the above inequality by
$|R^+(\gamma)|$, using $\frac{1}{r}-\frac{1}{q}=\beta$, and
taking the supremum over all $R\in\mathcal{R}_p^{n+1}$, we
find that
\begin{align*}
\sup_{R\in\mathcal{R}_p^{n+1}}
\fint_{R^-(\gamma)}u^q\left[\fint_{S^+(\gamma)}v^{-r'}
\right]^{\frac{q}{r'}}\leq\frac{C}{2^{(n+p)(\beta-1)q}}.
\end{align*}
Thus, if $\gamma=0$, this and Definition \ref{parabolic Muckenhoupt class}(i)
imply $(u,v)\in TA_{r,q}^+(0)$ and, if
$\gamma\in(0,1)$, then applying this and Theorem
\ref{independence of time lag 2}
[here we use the assumption that $u\in
A_\infty^+(\gamma)$] we also conclude $(u,v)\in
TA_{r,q}^+(\gamma)$, which then completes
the proof of the sufficiency of both (i) and (ii).

Next, we prove the necessity of (i). Let $f\in
L_{\mathrm{loc}}^1(\mathbb{R}^{n+1})$. From Corollary
\ref{independence of time lag}(i) and
Theorem \ref{weak type inequality}, we infer that, to show
that $\mathcal{M}^{\gamma+}_\beta$ is bounded from
$L^r(\mathbb{R}^{n+1},v^r)$ to
$L^{q,\infty}(\mathbb{R}^{n+1},u^q)$, it remains to prove that
there exists a positive constant $K$, depending only on $n$,
$p$, $\gamma$, and $\beta$, such that, for any
$(x,t)\in\mathbb{R}^{n+1}$,
\begin{align}\label{20240819.1730}
\mathcal{M}^{\gamma+}_\beta(f)(x,t)\leq
KM^{\frac{\gamma}{4}+}_\beta(f)(x,t).
\end{align}
Indeed, fix $L\in(0,\infty)$ and let $P:=R(x,t+\frac{\gamma
L^p}{2},L)$. Then we are easy to show that
$(x,t)\in P^-(\frac{\gamma}{4})$, $R(x,t,L)^+(\gamma)\subset
P^+(\frac{\gamma}{4})$, and $|P^+(\frac{\gamma}{4})|=
\frac{1-\frac{\gamma}{4}}{1-\gamma}|R(x,t,L)^+(\gamma)|$. This
further implies that
\begin{align*}
&\left|R(x,t,L)^+(\gamma)\right|^\beta
\fint_{R(x,t,L)^+(\gamma)}|f|\\
&\quad\leq
\left(\frac{1-\gamma}{1-\frac{\gamma}{4}}\right)^{\beta-1}
\left|P^+\left(\frac{\gamma}{4}\right)\right|^\beta
\fint_{P^+(\frac{\gamma}{4})}|f|\leq
\left(\frac{1-\gamma}{1-\frac{\gamma}{4}}\right)^{\beta-1}
M^{\frac{\gamma}{4}+}_\beta(f)(x,t).
\end{align*}
Taking the supremum over all $L\in(0,\infty)$, we obtain
\eqref{20240819.1730}, which completes the proof of
necessity of (i).

The proof of the necessity of (ii)
is a slight modification of that of (i) with Theorem
\ref{weak type inequality} replaced by
Theorem
\ref{weak type inequality gamma=0}; we omit the details.
This finishes the
proof of Theorem \ref{weak type inequality centered}.
\end{proof}

\begin{remark}
In the case $u=v$, Theorem
\ref{weak type inequality centered} when $r=q$ coincides with
\cite[Lemma 4.2]{ks(apde-2016)} and  when $r<q$ coincides with
\cite[Theorem 1.1]{mhy(fm-2023)}.
\end{remark}

The following is a simple corollary of Theorem
\ref{weak type inequality centered} with $u=v$, the
self-improving property of $A_{r,q}^+(\gamma)$ with $1<r\leq
q<\infty$, and the Stein--Weiss interpolation theorem,
which has been obtained in \cite[Theorem 5.4]{ks(apde-2016)}
when $\beta=0$ and in \cite[Theorem 1.3]{mhy(fm-2023)}
when $\beta\in(0,1)$.

\begin{corollary}\label{centered strong type inequality}
Let $\gamma,\rho\in(0,1)$, $\beta\in[0,1)$, $1<r\leq
q<\infty$, $\beta=\frac{1}{r}-\frac{1}{q}$, and $\omega$ be a
weight on $\mathbb{R}^{n+1}$. Then $\omega\in
A_{r,q}^+(\gamma)$ if and only if
$\mathcal{M}^{\rho+}_\beta$
is bounded from $L^r(\mathbb{R}^{n+1},\omega^r)$ to
$L^q(\mathbb{R}^{n+1},\omega^q)$.
\end{corollary}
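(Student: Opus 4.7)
The plan is to deduce the strong-type bound from the weak-type characterization in Theorem \ref{weak type inequality centered} by interpolating between two weak-type endpoints produced by the self-improving property of $A_{r,q}^+(\gamma)$. The converse direction will be almost immediate.

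For the necessity, I would first use Corollary \ref{independence of time lag}(i) to replace the time lag, so that $\omega \in A_{r,q}^+(\rho)$; in particular $\omega \in A_\infty^+(\rho)$. The self-improving property (Corollary \ref{self-improving property} combined with its dual counterpart obtained by applying Corollary \ref{self-improving property} to $\omega^{-1}$ in the $-$ class via Proposition \ref{duality property}, or equivalently the stronger version in \cite[Lemma~2.2]{mhy(fm-2023)}) then produces two bracket pairs of exponents $(r_1,q_1)$ and $(r_2,q_2)$ on the off-diagonal line $\{1/r-1/q=\beta\}$ with $r_1<r<r_2$ and correspondingly $q_1<q<q_2$, such that $\omega\in A_{r_i,q_i}^+(\rho)$ for $i\in\{1,2\}$. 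Since $\omega\in A_\infty^+(\rho)$, Theorem \ref{weak type inequality centered}(i) applied with $u=v=\omega$ at each endpoint gives that
\begin{align*}
\mathcal{M}^{\rho+}_\beta:\ L^{r_i}(\mathbb{R}^{n+1},\omega^{r_i})\to
L^{q_i,\infty}(\mathbb{R}^{n+1},\omega^{q_i}),\quad i=1,2.
\end{align*}
Applying the Stein--Weiss interpolation theorem (see, e.g., \cite[Corollary 5.5.2]{bl(intepolation-1976)}) to these two weak-type bounds then yields the desired strong-type bound
$\mathcal{M}^{\rho+}_\beta:L^r(\mathbb{R}^{n+1},\omega^r)\to L^q(\mathbb{R}^{n+1},\omega^q)$.

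For the sufficiency, the strong-type bound obviously implies the weak-type bound from $L^r(\mathbb{R}^{n+1},\omega^r)$ to $L^{q,\infty}(\mathbb{R}^{n+1},\omega^q)$. A direct testing argument of the type used in the sufficiency half of Theorem \ref{weak type inequality centered} (plugging in $f:=\omega^{-r'}\mathbf{1}_{S^+(\rho)}$ for a suitably shifted upper part) first yields a shifted Muckenhoupt condition, after which Theorem \ref{independence of time lag 2} (with the auxiliary verification that the strong-type bound forces $\omega\in A_\infty^+(\rho)$, e.g., by testing on indicator functions) upgrades this to $\omega\in A_{r,q}^+(\rho)$. A final application of Corollary \ref{independence of time lag}(i) converts this to $\omega\in A_{r,q}^+(\gamma)$.

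The main obstacle is the self-improving step along the constant-$\beta$ line: Corollary \ref{self-improving property} as stated perturbs only $q$ with $r$ fixed and therefore changes $\beta$, so to remain on the line $1/r-1/q=\beta$ one must supplement it with its dual version obtained via Proposition \ref{duality property} (which perturbs $r$ with $q$ fixed) and then interpolate the resulting off-diagonal membership back onto the correct line using the nested property in Proposition \ref{nested property}; alternatively, one appeals directly to \cite[Lemma~2.2]{mhy(fm-2023)} which bundles this conclusion. Once the two endpoint weak-type bounds are in hand the remaining interpolation step is routine.
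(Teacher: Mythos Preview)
Your proposal follows essentially the same route as the paper, which records this result as a direct consequence of Theorem~\ref{weak type inequality centered} with $u=v$, the self-improving property of $A_{r,q}^+(\gamma)$ (in the form of \cite[Lemma~2.2]{mhy(fm-2023)}, exactly as you suggest), and the Stein--Weiss interpolation theorem. Your identification of the obstacle---that Corollary~\ref{self-improving property} perturbs only $q$ and must be combined with its dual via Proposition~\ref{duality property}, or else replaced by \cite[Lemma~2.2]{mhy(fm-2023)}---is correct and is precisely what the paper has in mind.

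One simplification for the sufficiency direction: your ``auxiliary verification that the strong-type bound forces $\omega\in A_\infty^+(\rho)$'' is unnecessary in the one-weight setting. The shifted Muckenhoupt condition obtained from the testing argument is already a one-weight condition, and the one-weight independence-of-time-lag results (\cite[Lemma~2.1(3)]{mhy(fm-2023)} for $r<q$, \cite[Theorem~3.1]{km(am-2024)} for $r=q$) do not require the a priori $A_\infty^+$ hypothesis; that hypothesis appears in Theorem~\ref{independence of time lag 2} only to handle the genuinely two-weight case. So you may pass directly from the shifted condition to $\omega\in A_{r,q}^+(\rho)$ by citing those one-weight results, and then to $A_{r,q}^+(\gamma)$ as you indicate.
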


\section{Characterizations of Weighted Boundedness of \\
Parabolic Fractional Integrals with Time Lag}
\label{section5}

In this section, we introduce the
parabolic forward in time and back in time fractional integral
operators with time lag. Then we give the weak-type
parabolic two-weighted inequality and the strong-type
parabolic weighted inequality for such operators.
Recall that the \emph{parabolic distance} $d_p$ on
$\mathbb{R}^{n+1}\times\mathbb{R}^{n+1}$
is defined by setting, for any $(x,t),(y,s)
\in\mathbb{R}^{n+1}$,
\begin{align*}
d_p((x,t),(y,s)):=\max\left\{\|x-y\|_\infty,\,
|t-s|^{\frac{1}{p}}\right\}.
\end{align*}
We then introduce the definitions of parabolic forward in time
fractional integrals with time lag
and parabolic back in time fractional
integrals with time lag as follows.

\begin{definition}\label{parabolic Riesz potentials}
Let $\gamma,\beta\in[0,1)$. For any $f\in
L_{\mathrm{loc}}^{1}(\mathbb{R}^{n+1})$,
the \emph{parabolic forward in time fractional integral
$I^{\gamma+}_\beta(f)$ with time lag} and the \emph{parabolic
back in time fractional integral $I^{\gamma-}_\beta(f)$ with
time lag} of $f$ are defined, respectively, by setting,
for any $(x,t)\in\mathbb{R}^{n+1}$,
\begin{align*}
I^{\gamma+}_\beta(f)(x,t):=\int_{\bigcup_{L\in(0,\infty)}
R(\mathbf{0},0,L)^+(\gamma)}\frac{f(x-y,t-s)}
{[d_p((y,s),(\mathbf{0},0))]^{(n+p)(1-\beta)}}\,dy\,ds
\end{align*}
and
\begin{align*}
I^{\gamma-}_\beta(f)(x,t):=\int_{\bigcup_{L\in(0,\infty)}
R(\mathbf{0},0,L)^-(\gamma)}\frac{f(x-y,t-s)}
{[d_p((y,s),(\mathbf{0},0))]^{(n+p)(1-\beta)}}\,dy\,ds.
\end{align*}
\end{definition}

\begin{remark}\label{range of integral}
The integral domain $\bigcup_{L\in(0,\infty)}
R(\mathbf{0},0,L)^+(\gamma)$ in Definition
\ref{parabolic Riesz potentials} is the area above the surface
$t=\gamma\|x\|_\infty^p$. In
particular, when $n=1$, the integral domain is
exactly the area above the parabola $t=\gamma|x|^p$;
see the following figure.
\begin{center}
\begin{tikzpicture}
\draw[dashed,color=blue,fill=red!10] (-3,4.5) parabola bend
(0,0) (3,4.5) node [above] {$t=\gamma x^p$};
\draw[purple,fill=purple!20] (-2,2) rectangle (2,4);
\node[purple] at (0,3) {$R(0,0,L)^+(\gamma)$};
\draw[dashed,-] (2,0)--(2,2);
\draw[fill] (2,0) circle (0.04);
\node at (2,-0.25) {$(L,0)$};
\draw[dashed,-] (-2,0)--(-2,2);
\draw[fill] (-2,0) circle (0.04);
\node at (-2,-0.25) {$(-L,0)$};
\draw[->] (-4,0)--(4,0) node [right] {$x$};
\draw[->] (0,-0.5)--(0,5) node [above] {$t$};
\draw[fill] (0,0) circle (0.04);
\node at (-0.5,-0.25) {$(0,0)$};
\end{tikzpicture}
\end{center}
\end{remark}

We have the following relation between
the parabolic fractional integral with time lag and the
centered parabolic fractional maximal operator with time lag.

\begin{lemma}\label{pointwise control}
Let $\gamma,\beta\in[0,1)$. Then there exists
a positive constant $C$, depending only on $n$, $\gamma$,
and $\beta$, such that, for any
$f\in L_{\mathrm{loc}}^{1}(\mathbb{R}^{n+1})$
and $(x,t)\in\mathbb{R}^{n+1}$,
\begin{align}\label{20240812.1543}
\mathcal{M}^{\gamma+}_\beta(f)(x,t)\leq
CI^{\gamma+}_\beta(|f|)(x,t).
\end{align}
\end{lemma}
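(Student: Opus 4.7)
The plan is to prove \eqref{20240812.1543} by bounding, for each fixed $L\in(0,\infty)$, the quantity $|R(x,t,L)^+(\gamma)|^\beta\fint_{R(x,t,L)^+(\gamma)}|f|$ by a constant multiple (independent of $L$) of $I^{\gamma+}_\beta(|f|)(x,t)$, and then taking the supremum over $L$. The whole argument is a one-step pointwise estimate built on the trivial observation that, inside $R(x,t,L)^+(\gamma)$, the parabolic distance $d_p$ to $(x,t)$ is at most $L$.

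Fix $L\in(0,\infty)$. For every $(z,\tau)\in R(x,t,L)^+(\gamma)$ one has $\|z-x\|_\infty\le L$ and $|\tau-t|^{1/p}<L$, so that $d_p((z,\tau),(x,t))\le L$. Rewriting the average as
$$|R(x,t,L)^+(\gamma)|^\beta\fint_{R(x,t,L)^+(\gamma)}|f|=|R(x,t,L)^+(\gamma)|^{\beta-1}\int_{R(x,t,L)^+(\gamma)}|f(z,\tau)|\,dz\,d\tau,$$
I can insert the factor $[d_p((z,\tau),(x,t))]^{-(n+p)(1-\beta)}$ at the cost of the uniform multiplicative constant $L^{(n+p)(1-\beta)}$. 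Using the explicit volume $|R(x,t,L)^+(\gamma)|=2^n(1-\gamma)L^{n+p}$, all powers of $L$ outside the integral cancel, yielding
$$|R(x,t,L)^+(\gamma)|^\beta\fint_{R(x,t,L)^+(\gamma)}|f|\le[2^n(1-\gamma)]^{\beta-1}\int_{R(x,t,L)^+(\gamma)}\frac{|f(z,\tau)|}{[d_p((z,\tau),(x,t))]^{(n+p)(1-\beta)}}\,dz\,d\tau.$$

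Next I would apply the translation-type substitution $(y,s):=(x-z,t-\tau)$, whose Jacobian is $1$ and under which $d_p((z,\tau),(x,t))=d_p((y,s),(\mathbf{0},0))$, to bring the right-hand side into the exact form of the integrand defining $I^{\gamma+}_\beta$ in Definition~\ref{parabolic Riesz potentials}. The image of $R(x,t,L)^+(\gamma)$ under this substitution is a single parabolic piece sitting inside the full integration domain $\bigcup_{L'\in(0,\infty)}R(\mathbf{0},0,L')^+(\gamma)$ used to define $I^{\gamma+}_\beta$; since the integrand is nonnegative, extending the integration to this full domain only increases the integral. This gives
$$|R(x,t,L)^+(\gamma)|^\beta\fint_{R(x,t,L)^+(\gamma)}|f|\le[2^n(1-\gamma)]^{\beta-1}I^{\gamma+}_\beta(|f|)(x,t).$$
Taking the supremum over $L\in(0,\infty)$ yields \eqref{20240812.1543} with $C:=[2^n(1-\gamma)]^{\beta-1}$.

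No serious obstacle is expected. The algebra cancelling the powers of $L$ is automatic once $|R(x,t,L)^+(\gamma)|=2^n(1-\gamma)L^{n+p}$ is inserted; the only point that needs careful bookkeeping is the identification, after the change of variables, of the image of $R(x,t,L)^+(\gamma)$ as a subset of the integration domain appearing in the definition of $I^{\gamma+}_\beta$, which follows directly from the geometric description in Remark~\ref{range of integral}.
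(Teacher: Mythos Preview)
Your proposal is correct and follows exactly the paper's approach: bound $d_p\le L$ on $R(x,t,L)^+(\gamma)$, use $|R^+(\gamma)|=2^n(1-\gamma)L^{n+p}$ to cancel the powers of $L$, dominate by $I^{\gamma+}_\beta(|f|)(x,t)$, and take the supremum over $L$, arriving at the identical constant $C=[2^n(1-\gamma)]^{\beta-1}=2^{n(\beta-1)}(1-\gamma)^{\beta-1}$. One bookkeeping caution: under your substitution $(y,s)=(x-z,t-\tau)$ the image of $R(x,t,L)^+(\gamma)$ actually lands in $R(\mathbf{0},0,L)^-(\gamma)\subset\{s<0\}$, not in $\bigcup_{L'}R(\mathbf{0},0,L')^+(\gamma)$; the paper sidesteps this by writing the kernel integral directly as $\int_{R(x,t,L)^+(\gamma)}|f(y,s)|[d_p((x,t),(y,s))]^{-(n+p)(1-\beta)}\,dy\,ds$ and bounding it by $I^{\gamma+}_\beta(|f|)(x,t)$ without an explicit change of variables (indeed, the proof of Lemma~\ref{Welland inequality} likewise treats $I^{\gamma+}_\beta(|f|)(x,t)$ as the integral over $\bigcup_L R(x,t,L)^+(\gamma)$), so you can simply drop the substitution step and quote that form directly.
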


\begin{proof}
Let $(x,t)\in\mathbb{R}^{n+1}$ and $L_0\in(0,\infty)$.
Simply denote $R(x,t,L_0)$ by $R$. Observe that, for any
$(y,s)\in R^+(\gamma)$, $d_p((x,t),(y,s))\leq L$.
From this, the fact that
$|R^+(\gamma)|=2^n(1-\gamma)L^{n+p}$,
and Definition \ref{parabolic Riesz potentials}, we deduce that
\begin{align*}
\left|R^+(\gamma)\right|^{\beta}
\fint_{R^+(\gamma)}|f|&\leq2^{n(\beta-1)}(1-\gamma)^{\beta-1}
\int_{R^+(\gamma)}\frac{|f(y,s)|}{[d_p((x,t),(y,s))]
^{(n+p)(1-\beta)}}\,dy\,ds\\
&\leq2^{n(\beta-1)}(1-\gamma)^{\beta-1}
I^{\gamma+}_\beta(|f|)(x,t).
\end{align*}
Taking the supremum over all $L_0\in(0,\infty)$, we conclude that
\eqref{20240812.1543} with
$C:=2^{n(\beta-1)}(1-\gamma)^{\beta-1}$ holds,
which completes the proof of Lemma
\ref{pointwise control}.
\end{proof}

The following lemma is a Welland type inequality in the
parabolic setting. For the Welland
inequality in the elliptic setting, see
\cite[(2.3)]{w(pams-1975)} and \cite[(1.2)]{as(tams-1988)}.

\begin{lemma}\label{Welland inequality}
Let $\gamma,\beta\in(0,1)$ and $\epsilon\in(0,\min\{\beta,\,
1-\beta\})$. Then there exists a positive constant
$C$, depending only on $n$, $p$, $\gamma$, $\beta$, and $\epsilon$,
such that, for any $f\in
L_{\mathrm{loc}}^{1}(\mathbb{R}^{n+1})$
and $(x,t)\in\mathbb{R}^{n+1}$,
\begin{align}\label{20240812.1627}
I^{\gamma+}_\beta(|f|)(x,t)\leq
C\left[\mathcal{M}^{\gamma^2+}_{\beta-\epsilon}(f)(x,t)
\mathcal{M}^{\gamma^2+}_{\beta+\epsilon}(f)(x,t)\right]^
\frac{1}{2}.
\end{align}
\end{lemma}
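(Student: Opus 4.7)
The plan is to adapt the classical Welland splitting argument to the parabolic setting with time lag, with the strict inequality $\gamma^2<\gamma$ providing the essential geometric slack. Fix $(x,t)\in\mathbb{R}^{n+1}$ and a free scale parameter $r\in(0,\infty)$. First I would split
\begin{align*}
I^{\gamma+}_\beta(|f|)(x,t)=\mathrm{I}(r)+\mathrm{II}(r),
\end{align*}
where $\mathrm{I}(r)$ integrates over the near set $D^+(\gamma)\cap\{d_p((y,s),(\mathbf{0},0))<r\}$ (with $D^+(\gamma):=\bigcup_{L\in(0,\infty)}R(\mathbf{0},0,L)^+(\gamma)$ as in Definition \ref{parabolic Riesz potentials}), and $\mathrm{II}(r)$ integrates over the complementary far set inside $D^+(\gamma)$. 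Each of $\mathrm{I}(r)$ and $\mathrm{II}(r)$ would be further dyadically decomposed into parabolic shells of the form $\{r/2^{k+1}\leq d_p<r/2^k\}$ and $\{r\cdot 2^k\leq d_p<r\cdot 2^{k+1}\}$, respectively, for $k\in\mathbb{Z}_+$.

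The crux is the local estimate, for every $\rho\in(0,\infty)$,
\begin{align*}
\int_{D^+(\gamma)\cap\{d_p((y,s),(\mathbf{0},0))<\rho\}}\left|f(x-y,t-s)\right|\,dy\,ds\lesssim\rho^{(n+p)(1-\beta+\epsilon)}\,\mathcal{M}^{\gamma^2+}_{\beta-\epsilon}(f)(x,t),
\end{align*}
together with its analogue for the outer shells involving $\mathcal{M}^{\gamma^2+}_{\beta+\epsilon}(f)(x,t)$. To establish it, after the change of variables $(y',s'):=(x-y,t-s)$ the integration region becomes a parabolic-shape set attached to $(x,t)$ whose temporal coordinate satisfies $s'<t-\gamma\|x-y'\|_\infty^p$; I would cover this set by a nested sequence $\{R(x,t,L_j)^{+}(\gamma^2)\}_{j\in\mathbb{Z}_+}$ of time-lagged parabolic rectangles centered at $(x,t)$, with edge lengths comparable to $\gamma^{(j-1)/p}\rho$, so that the boundary defect $\gamma\|x-y'\|_\infty^p$ is absorbed into the time-lag tolerance $\gamma^2L_j^p$---this is exactly what the strict inequality $\gamma^2<\gamma$ delivers. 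Each piece contributes at most $L_j^{(n+p)(1-\beta+\epsilon)}\,\mathcal{M}^{\gamma^2+}_{\beta-\epsilon}(f)(x,t)$ by Definition \ref{centered parabolic maximal operator}, and the resulting geometric series in $j$ converges because $\gamma<1$ and $1-\beta+\epsilon>0$.

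Combining the shell estimate with the essentially constant kernel bound $[d_p((y,s),(\mathbf{0},0))]^{-(n+p)(1-\beta)}\sim(2^k/r)^{(n+p)(1-\beta)}$ on each shell, and summing the geometric series in $k$ (convergent since $\epsilon<\min\{\beta,\,1-\beta\}$), produces, for every $r\in(0,\infty)$,
\begin{align*}
I^{\gamma+}_\beta(|f|)(x,t)\lesssim r^{(n+p)\epsilon}\,\mathcal{M}^{\gamma^2+}_{\beta-\epsilon}(f)(x,t)+r^{-(n+p)\epsilon}\,\mathcal{M}^{\gamma^2+}_{\beta+\epsilon}(f)(x,t).
\end{align*}
Balancing the two summands by the choice of $r\in(0,\infty)$---and separately handling the trivial degenerate cases where one of the two maximal values vanishes or is infinite---then yields the desired geometric mean bound \eqref{20240812.1627}. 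The hard part will be the geometric covering step: one must quantitatively exploit $\gamma^2<\gamma$ to cover the parabolic-shape region by rectangles all centered at $(x,t)$ with geometrically decreasing edge lengths $L_j$ and total measure of order $\rho^{n+p}$; without these properties the series over $j$ would fail to converge, and this is precisely why the exponent $\gamma^2$ (and not merely $\gamma$) appears on the right-hand side of \eqref{20240812.1627}.
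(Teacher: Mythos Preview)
Your proposal is correct and matches the paper's approach: split at a free scale, cover the parabolic-shape region by $\gamma^2$-lagged rectangles centered at $(x,t)$ with edge lengths forming a geometric sequence with ratio $\gamma^{1/p}$, sum the resulting geometric series, and optimize the scale. The only cosmetic difference is that the paper carries out the covering in a single layer---decomposing the near region directly into disjoint time-slabs $V_i^+=Q(x,\gamma^{(i-2)/p}\delta)\times(t+\gamma^i\delta^p,\,t+\gamma^{i-1}\delta^p)$, each of which already sits inside one $R_i^+(\gamma^2)$ and on which the kernel is essentially constant---rather than your two-layer scheme of dyadic $d_p$-shells followed by the rectangle covering; both routes yield the same bound, yours just with a harmless extra geometric sum in $k$.
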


\begin{proof}
Let $f\in L_{\mathrm{loc}}^{1}(\mathbb{R}^{n+1})$,
$(x,t)\in\mathbb{R}^{n+1}$, and
$\Omega_{(x,t)}^{\gamma+}:=\bigcup_{L\in(0,\infty)}R(x,t,L)
^+(\gamma)$. Without loss of generality, we may assume that
$\mathcal{M}^{\gamma^2+}_{\beta-\epsilon}(f)(x,t)
\mathcal{M}^{\gamma^2+}_{\beta+\epsilon}(f)(x,t)\in(0,\infty)$;
otherwise, \eqref{20240812.1627} holds automatically.
Fix $\delta\in(0,\infty)$ (which will be
determined later) and define
\begin{align*}
\Omega_1:=
\left\{(y,s)\in\Omega_{(x,t)}^{\gamma+}:\
d_p((x,t),(y,s))\in[0,\delta)\right\}
\end{align*}
and $\Omega_2:=\Omega_{(x,t)}^{\gamma+}\setminus\Omega_1$. Then
\begin{align}\label{20240812.1634}
I^{\gamma+}_\beta(|f|)(x,t)=
\int_{\Omega_1}\frac{|f(y,s)|\,dy\,ds}{[d_p((x,t),(y,s))]
^{(n+p)(1-\beta)}}+
\int_{\Omega_2}\ldots
=:I(x,t)+II(x,t).
\end{align}

We first estimate $I(x,t)$. Let
$\eta:=(\frac{1}{\gamma})^{\frac{1}{p}}$ and, for any
$i\in\mathbb{N}$,
\begin{align*}
V_i^+:=Q\left(x,\eta^{-i+2}\delta\right)\times
\left(t+\gamma^i\delta^p,t+\gamma^{i-1}\delta^p\right).
\end{align*}
Then the following four statements hold obviously:
\begin{enumerate}
\item[(i)] For any $i\in\mathbb{N}$ and $(y,s)\in V_i^+$,
$d_p((x,t),(y,s))\geq(\gamma^i\delta^p)^{\frac{1}{p}}
=\eta^{-i}\delta$.

\item[(ii)] For any $i,j\in\mathbb{N}$ with $i\neq j$,
$V_i^+\cap V_j^+=\emptyset$.

\item[(iii)] For any $i\in\mathbb{N}$, the bottom of $V_i^+$
contains the top of $V_{i+1}^+$.

\item[(iv)] $\Omega_1\subset\bigcup_{i\in\mathbb{N}}V_i^+$.
\end{enumerate}
For any $i\in\mathbb{N}$, let
$R_i:=R(x,t,\eta^{-i+2}\delta)$.
Then
$V_i^+\subset R_i^+(\gamma^2)$
and
$|V_i^+|=\frac{\gamma}{1+\gamma}
|R_i^+(\gamma^2)|$.
From this, the monotone convergence theorem, (i) through (iv),
and the fact that
\begin{align*}
\left|R_i^+\left(\gamma^2\right)\right|
=2^n\left(1-\gamma^2\right)\eta^{2(n+p)}
\left(\eta^{-i}\delta\right)^{n+p},
\end{align*}
it follows that
\begin{align}\label{20240812.2223}
I(x,t)&\leq\sum_{i\in\mathbb{N}}\int_{V_i^+}
\frac{|f(y,s)|}{[d_p((x,t),(y,s))]^{(n+p)(1-\beta)}}\,dy\,ds\\
&\leq\sum_{i\in\mathbb{N}}\left(\eta^{-i}\delta\right)
^{(n+p)(\beta-1)}\int_{V_i^+}|f|\notag\\
&\leq2^{n(1+\epsilon-\beta)}\left(1-\gamma^2\right)
^{1+\epsilon-\beta}\eta^{2(n+p)(1+\epsilon-\beta)}\notag\\
&\quad\times\sum_{i\in\mathbb{N}}\left(\eta^{-i}\delta\right)
^{(n+p)\epsilon}\left|R_i^+(\gamma^2)\right|
^{\beta-\epsilon-1}\int_{R_i^+(\gamma^2)}|f|\notag\\
&=\frac{2^{n(1+\epsilon-\beta)}(1-\gamma^2)
^{1+\epsilon-\beta}\eta^{2(n+p)(1+\epsilon-\beta)}}
{\eta^{(n+p)\epsilon}-1}\delta^{(n+p)\epsilon}
M_{\beta-\epsilon}^{\gamma^2+}(f)(x,t).\notag
\end{align}

Now, we estimate $II(x,t)$ similarly.
For any
$i\in\mathbb{N}$, let
\begin{align*}
U_j^+:=Q\left(x,\eta^j\delta\right)\times
\left(t+\gamma^{-j+2}\delta^p,t+\gamma^{-j+1}\delta^p\right).
\end{align*}
Then the following other four statements hold obviously:
\begin{enumerate}
\item[(v)] For any $j\in\mathbb{N}$ and $(y,s)\in U_j^+$,
$d_p((x,t),(y,s))\geq(\gamma^{-j+2}\delta^p)^{\frac{1}{p}}
=\eta^{j-2}\delta$.

\item[(vi)] For any $j,k\in\mathbb{N}$ with $j\neq k$,
$U_j^+\cap U_k^+=\emptyset$.

\item[(vii)] For any $j\in\mathbb{N}$, the bottom of
$U_{j+1}^+$ contains the top of $U_j^+$.

\item[(viii)] $\Omega_2\subset\bigcup_{j\in\mathbb{N}}U_j^+$.
\end{enumerate}
For any $j\in\mathbb{N}$, let $P_i:=R(x,t,\eta^j\delta)$.
Then
$U_j^+\subset P_j^+(\gamma^2)$
and
$|U_j^+|=\frac{\gamma}{1+\gamma}
|P_j^+(\gamma^2)|$.
From this, the monotone convergence theorem,
(v) through (viii), and the fact that
\begin{align*}
\left|P_j^+\left(\gamma^2\right)\right|
=2^n\left(1-\gamma^2\right)\eta^{2(n+p)}
\left(\eta^{j-2}\delta\right)^{n+p},
\end{align*}
we deduce that
\begin{align*}
II(x,t)&\leq\sum_{j\in\mathbb{N}}\int_{U_j^+}
\frac{|f(y,s)|}{[d_p((x,t),(y,s))]^{(n+p)(1-\beta)}}\,dy\,ds\\
&\leq\sum_{j\in\mathbb{N}}\left(\eta^{j-2}\delta\right)
^{(n+p)(\beta-1)}\int_{U_j^+}|f|\notag\\
&\leq2^{n(1-\epsilon-\beta)}\left(1-\gamma^2\right)
^{1-\epsilon-\beta}\eta^{2(n+p)(1-\epsilon-\beta)}\notag\\
&\quad\times\sum_{j\in\mathbb{N}}\left(\eta^{j-2}\delta\right)
^{-(n+p)\epsilon}\left|P_j^+(\gamma^2)\right|
^{\beta+\epsilon-1}\int_{P_j^+(\gamma^2)}|f|\notag\\
&=\frac{2^{n(1-\epsilon-\beta)}(1-\gamma^2)
^{1-\epsilon-\beta}\eta^{2(n+p)(1-\epsilon-\beta)}}
{\eta^{(n+p)\epsilon}-1}\delta^{-(n+p)\epsilon}
M_{\beta+\epsilon}^{\gamma^2+}(f)(x,t).
\end{align*}
Combining
this, \eqref{20240812.1634}, and \eqref{20240812.2223} and
choosing
\begin{align*}
\delta:=\left[\frac{\mathcal{M}_{\beta+\epsilon}
^{\gamma^2+}(f)(x,t)}{\mathcal{M}_{\beta-\epsilon}
^{\gamma^2+}(f)(x,t)}\right]^\frac{1}{2(n+p)\epsilon},
\end{align*}
we then obtain \eqref{20240812.1627} and hence finish the proof
of Lemma \ref{Welland inequality}.
\end{proof}

Next, we are ready to present the first main result of this
section.

\begin{theorem}\label{weak type Riesz potential}
Let $\gamma,\beta\in(0,1)$, $1\leq r<q<\infty$,
$\beta=\frac{1}{r}-\frac{1}{q}$, and $(u,v)$ be a pair of
nonnegative functions on $\mathbb{R}^{n+1}$. If $u\in
A_\infty^+(\gamma)$, then $(u,v)\in TA_{r,q}^+(\gamma)$ if
and only if there exists a positive constant $C$
such that, for any $f\in L^r(\mathbb{R}^{n+1},v^r)$,
\begin{align}\label{20240818.2228}
\left\|I^{\gamma+}_\beta(f)\right\|
_{L^{q,\infty}(\mathbb{R}^{n+1},u^q)}\leq C
\|f\|_{L^r(\mathbb{R}^{n+1},v^r)}.
\end{align}
\end{theorem}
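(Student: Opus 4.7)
The plan is to establish the two implications separately: sufficiency follows in one step from Lemma~\ref{pointwise control} and Theorem~\ref{weak type inequality centered}(i), while necessity uses the parabolic Welland inequality (Lemma~\ref{Welland inequality}) together with a self-improving bootstrap of the weight hypothesis.

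For the sufficiency, assume \eqref{20240818.2228} holds. The pointwise bound $\mathcal{M}_\beta^{\gamma+}(f)\lesssim I_\beta^{\gamma+}(|f|)$ from Lemma~\ref{pointwise control} transfers the boundedness to the centered parabolic fractional maximal operator $\mathcal{M}_\beta^{\gamma+}$; Theorem~\ref{weak type inequality centered}(i) (whose assumption $u\in A_\infty^+(\gamma)$ is in force) then yields $(u,v)\in TA_{r,q}^+(\gamma)$.

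For the necessity, fix $(u,v)\in TA_{r,q}^+(\gamma)$ and choose $\epsilon\in(0,\min\{\beta,1-\beta\})$ small enough (specified below). Define $q_1,q_2\in(r,\infty)$ by
\[
\frac{1}{q_1}:=\frac{1}{r}-(\beta-\epsilon),\qquad\frac{1}{q_2}:=\frac{1}{r}-(\beta+\epsilon),
\]
so that $q_1<q<q_2$ and the harmonic-mean identity $\frac{1}{q_1}+\frac{1}{q_2}=\frac{2}{q}$ (equivalently $\frac{2q_1q_2}{q_1+q_2}=q$) holds. Since $u\in A_\infty^+(\gamma)$, Corollary~\ref{independence of time lag}(i) gives $(u,v)\in TA_{r,q}^+(\gamma^2)$; the nested property (Proposition~\ref{nested property}(ii)) immediately supplies $(u,v)\in TA_{r,q_1}^+(\gamma^2)$, and the self-improving property (Corollary~\ref{self-improving property}), applied after choosing $\epsilon$ small enough that $q_2\in(q,q+\delta_0)$, yields $(u,v)\in TA_{r,q_2}^+(\gamma^2)$. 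Applying Theorem~\ref{weak type inequality centered}(i) at each exponent pair $(r,q_i)$ now produces
\[
\left\|\mathcal{M}^{\gamma^2+}_{\beta\mp\epsilon}(f)\right\|_{L^{q_i,\infty}(\mathbb{R}^{n+1},u^{q_i})}\le C_i\,\|f\|_{L^r(\mathbb{R}^{n+1},v^r)}\qquad(i=1,2).
\]

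With these weak-type bounds in hand, Lemma~\ref{Welland inequality} gives, for any $\lambda,\lambda_1,\lambda_2\in(0,\infty)$ with $C(\lambda_1\lambda_2)^{1/2}\le\lambda$,
\[
\left\{I_\beta^{\gamma+}(|f|)>\lambda\right\}\subset\left\{\mathcal{M}^{\gamma^2+}_{\beta-\epsilon}(f)>\lambda_1\right\}\cup\left\{\mathcal{M}^{\gamma^2+}_{\beta+\epsilon}(f)>\lambda_2\right\}.
\]
Writing $A:=\|f\|_{L^r(\mathbb{R}^{n+1},v^r)}$ and optimizing by setting $\lambda_i:=A(\lambda/(CA))^{q/q_i}$, the constraint $\lambda_1\lambda_2=(\lambda/C)^2$ is met and, by the harmonic-mean identity, $(A/\lambda_i)^{q_i}$ collapses to $(CA/\lambda)^q$ for both $i$. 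Summing the two weak-type contributions yields $u^q(\{I_\beta^{\gamma+}(|f|)>\lambda\})\lesssim(A/\lambda)^q$, which is \eqref{20240818.2228}. The main obstacle is precisely the bootstrap step promoting $(u,v)\in TA_{r,q}^+(\gamma)$ to $(u,v)\in TA_{r,q_2}^+(\gamma^2)$ with $q_2>q$: this is where the hypothesis $u\in A_\infty^+(\gamma)$ enters decisively, through the chain Corollary~\ref{independence of time lag}$\to$Corollary~\ref{self-improving property}. When $r=1$, Corollary~\ref{self-improving property} is not directly available, so this step requires separate treatment (for instance, by approximating through $r>1$ via Proposition~\ref{nested property}(i) or by a direct covering argument in the spirit of Andersen--Sawyer \cite{as(tams-1988)}).
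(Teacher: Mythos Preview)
Your sufficiency argument is correct and matches the paper's. Your necessity strategy---Welland inequality combined with weak-type maximal bounds at the perturbed exponents $q_1,q_2$---also parallels the paper's, but both your proposal and the paper's own proof share a genuine gap at the final combination step.

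The weak-type bounds you obtain from Theorem~\ref{weak type inequality centered}(i) at the pairs $(r,q_i)$ control the $u^{q_i}$-measures of the superlevel sets: $u^{q_i}(\{\mathcal{M}^{\gamma^2+}_{\beta\mp\epsilon}(f)>\lambda_i\})\le C_i^{q_i}(A/\lambda_i)^{q_i}$. Your optimization makes both right-hand sides equal $(CA/\lambda)^q$, but the left-hand sides still carry the measures $u^{q_1}$ and $u^{q_2}$, not $u^q$. The Welland containment only yields $u^q(\{I_\beta^{\gamma+}(|f|)>\lambda\})\le u^q(E_1)+u^q(E_2)$, and there is no general inequality $u^q(E_i)\lesssim u^{q_i}(E_i)$ available to bridge this; for $q_1<q$ one has $u^{q_1}(E)\ll u^q(E)$ wherever $u$ is large and the reverse where $u$ is small. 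The paper's proof has the same defect in cruder form: it asserts directly that $(u^q)(\{M_{\beta-\epsilon}^{\frac{\gamma^2}{4}+}(f)>\lambda/C\})\le K_1\lambda^{-q}\|f\|_{L^r(v^r)}^q$ by invoking Theorem~\ref{weak type inequality} at $(r,q_1)$, but that theorem only controls the $u^{q_1}$-measure, and a parabolic scaling check shows the asserted bound already fails when $u=v\equiv 1$ (the operator $M_{\beta-\epsilon}$ has the wrong homogeneity for a weak-$(r,q)$ estimate). The H\"older splitting that rescues the strong-type one-weight proof of Theorem~\ref{weighted inequality of Riesz potential}---automatically producing the powers $\omega^{q_1},\omega^{q_2}$---has no counterpart for weak-type level sets with three different measures. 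A correct argument must either compare $I_\beta^{\gamma+}$ directly with $M_\beta^{\gamma+}$ at the \emph{same} $\beta$ (hence same exponent pair and same measure $u^q$) via a good-$\lambda$ or Hedberg-type estimate, or supply an additional relation among $u^{q_1},u^q,u^{q_2}$ drawn from $u\in A_\infty^+(\gamma)$ that neither you nor the paper provides. Your flagged concern about $r=1$ is secondary by comparison and is in any case shared by the paper, since Corollary~\ref{self-improving property} is stated only for $r>1$.
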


\begin{proof}
We first prove the sufficiency. Assume that
\eqref{20240818.2228} holds. By this, Lemma
\ref{pointwise control}, and Theorem
\ref{weak type inequality centered}(i),
we conclude that $(u,v)\in TA_{r,q}^+(\gamma)$, which
completes the proof of the sufficiency.

Then we show the necessity.
Assume that $(u,v)\in TA_{r,q}^+(\gamma)$.
Let $f\in
L_{\mathrm{loc}}^{1}(\mathbb{R}^{n+1})$ and
$\lambda\in(0,\infty)$. From
Corollary \ref{self-improving property},
we infer that there exists
$\delta_0\in(0,\infty)$ such that $(u,v)\in
A_{r,q+\delta}^+(\gamma)$ for any $\delta\in(0,\delta_0)$.
Choose $\epsilon\in(0,\min\{\beta,\,1-\beta\})$ such that
\begin{align}\label{20240819.1607}
\frac{1}{\frac{1}{r}-(\beta+\epsilon)}-q\in(0,\delta_0)
\end{align}
and let $q_1,q_2\in(1,\infty)$ satisfy
\begin{align}\label{20240814.1542}
\frac{1}{q_1}=\frac{1}{r}-(\beta-\epsilon)\ \ \mbox{and}\ \
\frac{1}{q_2}=\frac{1}{r}-(\beta+\epsilon).
\end{align}
Then $1<q_1<q<q_2<q+\delta_0<\infty$.
Applying Lemma \ref{Welland inequality} and
\eqref{20240819.1730}, we find that there
exists a positive constant $C$, depending only on $n$, $p$,
$\gamma$, $\beta$, and $\epsilon$, such that
\begin{align*}
\left\{\left|I^{\gamma+}_\beta(f)\right|>\lambda\right\}
&\subset\left\{I^{\gamma+}_\beta(|f|)>\lambda\right\}\subset
\left\{C\left[M^{\frac{\gamma^2}{4}+}_{\beta-\epsilon}(f)
M^{\frac{\gamma^2}{4}+}_{\beta+\epsilon}(f)\right]^
\frac{1}{2}>\lambda\right\}\\
&\subset\left\{M^{\frac{\gamma^2}{4}+}_{\beta-\epsilon}(f)
>\frac{\lambda}{C}\right\}\cup
\left\{M^{\frac{\gamma^2}{4}+}_{\beta+\epsilon}(f)
>\frac{\lambda}{C}\right\}
\end{align*}
and hence
\begin{align}\label{20240819.1619}
\left(u^q\right)\left(\left\{\left|I^{\gamma+}_\beta(f)\right|
>\lambda\right\}\right)\leq
\left(u^q\right)\left(\left\{M^{\frac{\gamma^2}{4}+}
_{\beta-\epsilon}(f)>\frac{\lambda}{C}\right\}\right)+
\left(u^q\right)\left(\left\{M^{\frac{\gamma^2}{4}+}
_{\beta+\epsilon}(f)>\frac{\lambda}{C}\right\}\right).
\end{align}

On the one hand, according to the proven conclusion that
$q_1<q$, Proposition \ref{nested property}(ii), and Corollary
\ref{independence of time lag}(i), we obtain
$(u,v)\in TA_{r,q_1}^+(\frac{\gamma^2}{4})$, which, together
with \eqref{20240814.1542} and Theorem
\ref{weak type inequality}, further implies that
there exists a positive constant $K_1$, depending only on $n$,
$p$, $r$, $q$, $\beta$, $\epsilon$, and
$[u,v]_{TA_{r,q_1}^+(\frac{\gamma^2}{4})}$, such that
\begin{align}\label{20240819.1632}
\left(u^q\right)\left(\left\{M^{\frac{\gamma^2}{4}+}
_{\beta-\epsilon}(f)>\frac{\lambda}{C}\right\}\right)
\leq\frac{K_1}{\lambda^q}\left(\int_{\mathbb{R}^{n+1}}
|f|^rv^r\right)^{\frac{q}{r}}.
\end{align}
On the other hand, from the proven conclusions that
$q<q_2<q+\delta_0$ and $(u,v)\in TA_{r,q+\delta}^+(\gamma)$
for any $\delta\in(0,\delta_0)$ and from Corollary
\ref{independence of time lag}(i), it follows that $(u,v)\in
TA_{r,q_2}^+(\frac{\gamma^2}{4})$, which, combined with
\eqref{20240814.1542} and Theorem
\ref{weak type inequality}, further implies that
there exists a positive constant $K_2$, depending only on $n$,
$p$, $r$, $q$, $\beta$, $\epsilon$, and
$[u,v]_{TA_{r,q_1}^+(\frac{\gamma^2}{4})}$, such that
\begin{align*}
\left(u^q\right)\left(\left\{M^{\frac{\gamma^2}{4}+}
_{\beta+\epsilon}(f)>\frac{\lambda}{C}\right\}\right)
\leq\frac{K_2}{\lambda^q}\left(\int_{\mathbb{R}^{n+1}}
|f|^rv^r\right)^{\frac{q}{r}}.
\end{align*}
By this, \eqref{20240819.1619}, and
\eqref{20240819.1632}, we find that
\begin{align*}
\left(u^q\right)\left(\left\{\left|I^{\gamma+}_\beta(f)\right|
>\lambda\right\}\right)\leq\frac{K_1+K_2}{\lambda^q}
\left(\int_{\mathbb{R}^{n+1}}|f|^rv^r\right)^{\frac{q}{r}}.
\end{align*}
Taking the supremum over all $\lambda\in(0,\infty)$,
we then conclude that \eqref{20240818.2228} holds.
This finishes the proof of the necessity and hence Theorem
\ref{weak type Riesz potential}.
\end{proof}

For any given $q\in[1,\infty)$, let $A_{1,q}^+(\gamma)$ be the
set of all nonnegative functions $\omega$ on
$\mathbb{R}^{n+1}$ such that
$[\omega]_{A_{1,q}^+(\gamma)}:=
[\omega^{\frac1q},\omega^{\frac1q}]
_{TA_{1,q}^+(\gamma)}<\infty$.
The following is a direct consequence  of
Theorem \ref{weak type Riesz potential} when both $u=v$ and $r=1$;
we omit the details.

\begin{corollary}\label{weak type Riesz potential 2}
Let $\gamma\in(0,1)$, $q\in(1,\infty)$,
$\beta:=1-\frac{1}{q}$, and $\omega$ be a weight on
$\mathbb{R}^{n+1}$. Then $\omega\in
A_{1,q}^+(\gamma)$ if and only $I_\beta^{\gamma+}$ is bounded
from $L^1(\mathbb{R}^{n+1},\omega)$ to
$L^{q,\infty}(\mathbb{R}^{n+1},\omega^q)$.
\end{corollary}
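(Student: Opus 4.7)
The plan is to apply Theorem~\ref{weak type Riesz potential} directly, specialized to the one-weight endpoint setting by taking $r=1$ and $u=v=\omega$. With these choices, the source space $L^{r}(\mathbb{R}^{n+1},v^{r})$ of the theorem becomes $L^{1}(\mathbb{R}^{n+1},\omega)$ and the target space $L^{q,\infty}(\mathbb{R}^{n+1},u^{q})$ becomes $L^{q,\infty}(\mathbb{R}^{n+1},\omega^{q})$, matching the norms in the corollary exactly. Moreover, the two-weight membership $(u,v)=(\omega,\omega)\in TA_{1,q}^{+}(\gamma)$ reduces, via the definition of the diagonal class $A_{1,q}^{+}(\gamma)$, to the statement $\omega\in A_{1,q}^{+}(\gamma)$. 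Hence the biconditional in the corollary is simply the diagonal instance of the biconditional in Theorem~\ref{weak type Riesz potential}.

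The only nontrivial input is checking that the hypothesis $u=\omega\in A_{\infty}^{+}(\gamma)$ of Theorem~\ref{weak type Riesz potential} holds automatically in this one-weight setting. For this I would invoke the same chain of inclusions that was used just before Corollary~\ref{weighted inequality uncentered}: by \cite[Lemma 2.1(1)]{mhy(fm-2023)} together with \cite[Lemma 7.4]{ks(na-2016)}, for any $1\le r\le q<\infty$ and any nonnegative function $\omega$ on $\mathbb{R}^{n+1}$, $\omega\in A_{r,q}^{+}(\gamma)$ implies $\omega\in A_{\infty}^{+}(\gamma)$. Specializing to $r=1$ and $q$ as in the corollary yields $A_{1,q}^{+}(\gamma)\subset A_{\infty}^{+}(\gamma)$, so the required $A_{\infty}^{+}(\gamma)$ condition is furnished for free whenever we start from the weight side. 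For the reverse direction (from weak-type boundedness to the Muckenhoupt condition), inspection of the proof of Theorem~\ref{weak type Riesz potential} shows that the $A_{\infty}^{+}(\gamma)$ hypothesis is not actually used: the sufficiency direction proceeds via Lemma~\ref{pointwise control} and the sufficiency direction of Theorem~\ref{weak type inequality centered}(i), neither of which invokes $u\in A_{\infty}^{+}(\gamma)$.

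The main (and essentially the only) obstacle is thus administrative rather than mathematical: one must be careful to confirm that the $A_{\infty}^{+}(\gamma)$ hypothesis in Theorem~\ref{weak type Riesz potential} is automatically satisfied in the one-weight endpoint case on the ``weight $\Rightarrow$ boundedness'' side, and unneeded on the ``boundedness $\Rightarrow$ weight'' side. Once these two observations are in place, no further work is required: the specialization $u=v=\omega$, $r=1$ in Theorem~\ref{weak type Riesz potential} immediately yields the desired characterization.
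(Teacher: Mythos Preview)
Your overall approach—specialize Theorem~\ref{weak type Riesz potential} with $r=1$ and $u=v=\omega$—is exactly what the paper does (it declares the corollary a direct consequence of that specialization and omits the details), and your handling of the $A_\infty^+(\gamma)$ hypothesis on the ``weight $\Rightarrow$ boundedness'' side via the inclusion $A_{1,q}^+(\gamma)\subset A_\infty^+(\gamma)$ is correct.

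There is, however, a concrete error in your treatment of the reverse direction. You assert that the sufficiency part of Theorem~\ref{weak type inequality centered}(i) does not invoke $u\in A_\infty^+(\gamma)$, but it does: in that proof the boundedness of $\mathcal{M}_\beta^{\gamma+}$ only produces a \emph{shifted} condition (with $S^+(\gamma)$ in place of $R^+(\gamma)$), and the passage from this to the genuine $TA_{r,q}^+(\gamma)$ condition goes through Theorem~\ref{independence of time lag 2}, where the paper explicitly flags ``[here we use the assumption that $u\in A_\infty^+(\gamma)$].'' So your stated justification for dropping $A_\infty^+$ on the ``boundedness $\Rightarrow$ weight'' side does not stand. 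The fix in the one-weight setting is a small bootstrap: the shifted condition derived there is itself a parabolic $A_1^+$-type condition on $\omega^q$ (with a modified lag parameter), which already forces $\omega^q$—and, by a routine H\"older/power argument, $\omega$ itself—into $A_\infty^+(\gamma)$; Theorem~\ref{independence of time lag 2} may then be applied legitimately. This extra step is presumably among the ``details'' the paper omits.
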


The second main result of this section is the following strong
type parabolic weighted inequalities for parabolic
fractional integrals with time lag.

\begin{theorem}\label{weighted inequality of Riesz potential}
Let $\gamma,\beta\in(0,1)$, $1<r<q<\infty$,
$\beta=\frac{1}{r}-\frac{1}{q}$, and $\omega$ be a
weight on $\mathbb{R}^{n+1}$. Then $\omega\in
A_{r,q}^+(\gamma)$ if and only if $I^{\gamma+}_\beta$ is
bounded from $L^r(\mathbb{R}^{n+1},\omega^r)$ to
$L^q(\mathbb{R}^{n+1},\omega^q)$.
\end{theorem}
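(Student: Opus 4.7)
The plan is to establish the biconditional in two directions, reusing the characterizations already proven for the fractional maximal operators. For the sufficiency direction, suppose $I^{\gamma+}_\beta$ is bounded from $L^r(\mathbb{R}^{n+1},\omega^r)$ to $L^q(\mathbb{R}^{n+1},\omega^q)$. By the pointwise dominance $\mathcal{M}^{\gamma+}_\beta(f)\lesssim I^{\gamma+}_\beta(|f|)$ from Lemma \ref{pointwise control}, the centered parabolic fractional maximal operator $\mathcal{M}^{\gamma+}_\beta$ inherits the same strong boundedness on these weighted spaces, so Corollary \ref{centered strong type inequality} (applied with $\rho=\gamma$) delivers $\omega\in A_{r,q}^+(\gamma)$ immediately. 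This short route avoids the $A_\infty^+$ hypothesis that was required in the weak-type analogue (Theorem \ref{weak type Riesz potential}).

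For the necessity, I plan to exploit the parabolic Welland inequality of Lemma \ref{Welland inequality}. Pick $\epsilon\in(0,\min\{\beta,1-\beta\})$ and let $q_1,q_2$ be defined by $\frac{1}{q_i}=\frac{1}{r}-(\beta\mp\epsilon)$, so that $q_1<q<q_2$ and $\frac{1}{q_1}+\frac{1}{q_2}=\frac{2}{q}$. Writing $\omega^q=[\omega^{q_1}]^{q/(2q_1)}[\omega^{q_2}]^{q/(2q_2)}$ and applying H\"older with conjugate exponents $\frac{2q_1}{q}$ and $\frac{2q_2}{q}$, the Welland inequality gives
\begin{align*}
\int_{\mathbb{R}^{n+1}}\left[I^{\gamma+}_\beta(|f|)\right]^q\omega^q
\lesssim\left[\int_{\mathbb{R}^{n+1}}\left[\mathcal{M}^{\gamma^2+}_{\beta-\epsilon}(f)\right]^{q_1}\omega^{q_1}\right]^{\frac{q}{2q_1}}
\left[\int_{\mathbb{R}^{n+1}}\left[\mathcal{M}^{\gamma^2+}_{\beta+\epsilon}(f)\right]^{q_2}\omega^{q_2}\right]^{\frac{q}{2q_2}}.
\end{align*}
It remains to bound each factor by the correct power of $\|f\|_{L^r(\mathbb{R}^{n+1},\omega^r)}$ via Corollary \ref{centered strong type inequality}, which reduces matters to verifying the two memberships $\omega\in A_{r,q_1}^+(\gamma^2)$ and $\omega\in A_{r,q_2}^+(\gamma^2)$.

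The membership $\omega\in A_{r,q_1}^+(\gamma^2)$ follows from Proposition \ref{nested property}(ii) (since $q_1<q$) combined with Corollary \ref{independence of time lag}(i) to switch the time lag from $\gamma$ to $\gamma^2$; here I use the remark after Theorem \ref{weak type inequality} that $A_{r,q}^+(\gamma)\subset A_\infty^+(\gamma)$ so that the hypothesis of Corollary \ref{independence of time lag} is met. The trickier membership $\omega\in A_{r,q_2}^+(\gamma^2)$ is the heart of the argument and is precisely what the self-improving property (Corollary \ref{self-improving property}) is designed for: it produces some $\delta_0>0$ with $\omega\in A_{r,q+\delta}^+(\gamma)$ for all $\delta\in(0,\delta_0)$, and I will choose $\epsilon$ small enough that $q_2<q+\delta_0$, after which Corollary \ref{independence of time lag}(i) again switches the time lag to $\gamma^2$. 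Combining everything, the product of the two factors yields $\|f\|_{L^r(\omega^r)}^{q/2+q/2}=\|f\|_{L^r(\omega^r)}^q$, which is exactly the required strong-type bound.

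The main obstacle is precisely the need to invoke the self-improving property in order to land inside $A_{r,q_2}^+$, since the Welland inequality unavoidably forces one factor at a larger index than $q$; calibrating $\epsilon$ against the $\delta_0$ supplied by Corollary \ref{self-improving property} is the delicate coupling that makes the H\"older splitting work. Everything else is essentially bookkeeping with the two-weight constants and the time-lag change formulas already developed in Sections \ref{section2} through \ref{section4}.
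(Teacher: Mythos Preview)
Your proposal is correct and follows essentially the same approach as the paper: pointwise domination plus Corollary \ref{centered strong type inequality} for the sufficiency, and the Welland inequality combined with H\"older, the nested property for $q_1$, and the self-improving property for $q_2$ for the necessity. One harmless simplification: since Corollary \ref{centered strong type inequality} already allows the time lag $\rho$ of $\mathcal{M}^{\rho+}$ to differ from the $\gamma$ in $A_{r,q}^+(\gamma)$, you do not actually need to invoke Corollary \ref{independence of time lag}(i) to pass from $\gamma$ to $\gamma^2$; the paper applies Corollary \ref{centered strong type inequality} directly with $\omega\in A_{r,q_i}^+(\gamma)$.
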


\begin{proof}
To show the sufficiency, assume that
$I^{\gamma+}_\beta$ is bounded from
$L^r(\mathbb{R}^{n+1},\omega^r)$ to
$L^q(\mathbb{R}^{n+1},\omega^q)$. Then, by Lemma
\ref{pointwise control}, we find that
$\mathcal{M}^{\gamma+}_\beta$ is bounded from
$L^r(\mathbb{R}^{n+1},\omega^r)$ to $L^q(\mathbb{R}^{n+1},\omega^q)$,
which, together with Corollary~\ref{centered strong type inequality},
implies that
$\omega\in A_{r,q}^+(\gamma)$. This finishes the proof of
the sufficiency.

Now, we prove the necessity. Assume that $\omega\in
A_{r,q}^+(\gamma)$. Using Corollary
\ref{self-improving property}, we conclude that there
exists $\delta_0\in(0,\infty)$ such that $\omega\in
A_{r,q+\delta}^+(\gamma)$ for any $\delta\in(0,\delta_0)$.
Fix $\epsilon\in(0,\min\{\beta,1-\beta\})$ such that
\eqref{20240819.1607} holds and let $q_1,q_2\in(1,\infty)$
satisfy \eqref{20240814.1542}.
Then $1<q_1<q<q_2<q+\delta_0<\infty$ and
\begin{align*}
\frac{1}{2q_1}+\frac{1}{2q_2}=\frac{1}{q}.
\end{align*}
From this, Lemma \ref{Welland inequality}, and the H\"older
inequality, we infer that, for any $f\in
L^r(\mathbb{R}^{n+1},\omega^r)$,
\begin{align}\label{20240814.1554}
&\left[\int_{\mathbb{R}^{n+1}}\left|I^{\gamma+}_\beta(f)\right|
^q\omega^q\right]^{\frac{1}{q}}\\
&\quad\lesssim\left\{\int_{\mathbb{R}^{n+1}}
\left[\mathcal{M}_{\beta-\epsilon}^{\gamma^2+}(f)\right]
^{\frac{q}{2}}\omega^{\frac{q}{2}}\left[\mathcal{M}
_{\beta+\epsilon}^{\gamma^2+}(f)\right]^{\frac{q}{2}}
\omega^{\frac{q}{2}}\right\}^{\frac{1}{q}}\nonumber\\
&\quad\leq\left\{\int_{\mathbb{R}^{n+1}}\left[\mathcal{M}
_{\beta-\epsilon}^{\gamma^2+}(f)\right]^{q_1}
\omega^{q_1}\right\}^{\frac{1}{2q_1}}
\left\{\int_{\mathbb{R}^{n+1}}\left[\mathcal{M}
_{\beta+\epsilon}^{\gamma^2+}(f)\right]^{q_2}
\omega^{q_2}\right\}^{\frac{1}{2q_2}}
=:\mathrm{I}\times\mathrm{II}.\notag
\end{align}
To estimate $\mathrm{I}$, by the fact that
$q_1<q$ and Proposition \ref{nested property}(ii), we find
that $\omega\in A_{r,q_1}^+(\gamma)$. From this,
\eqref{20240814.1542}, and Corollary
\ref{centered strong type inequality}, we deduce that
\begin{align}\label{20240815.2218}
\mathrm{I}\lesssim\left(\int_{\mathbb{R}^{n+1}}|f|^r
\omega^r\right)^{\frac{1}{2r}}.
\end{align}
To estimate $\mathrm{II}$, by the proven conclusions that
$q_2-q\in(0,\delta_0)$ and $\omega\in A_{r,q+\delta}^+(\gamma)$
for any $\delta\in(0,\delta_0)$, we obtain $\omega\in
A_{r,q_2}^+(\gamma)$. From this, \eqref{20240814.1542}, and
Corollary \ref{centered strong type inequality}, it follows that
\begin{align*}
\mathrm{II}\lesssim\left(\int_{\mathbb{R}^{n+1}}|f|^r
\omega^r\right)^{\frac{1}{2r}}.
\end{align*}
Combining this, \eqref{20240815.2218}, and
\eqref{20240814.1554}, we find that $I^{\gamma+}_{\beta}$
is bounded from $L^r(\mathbb{R}^{n+1},\omega^r)$ to
$L^q(\mathbb{R}^{n+1},\omega^q)$,
which completes the proof of the necessity and hence
Theorem \ref{weighted inequality of Riesz potential}.
\end{proof}

The following theorem is a direct consequence of Corollary
\ref{independence of time lag}(i) and
Theorems~\ref{weak type inequality} and~\ref{weak type Riesz potential};
we omit the details.

\begin{theorem}\label{20241004.1030}
Let $\beta\in(0,1)$ and $1\leq r<q<\infty$ satisfy
$\beta=\frac{1}{r}-\frac{1}{q}$.
Let $\{\gamma_i\}_{i=1}^3$ be a sequence of $(0,1)$ and
$(u,v)$ be a pair of
nonnegative functions on $\mathbb{R}^{n+1}$.
Assume that $u\in A_\infty^+(\gamma)$.
Then
the following statements are mutually equivalent.
\begin{enumerate}
\item[\rm(i)]
$(u,v)\in TA_{r,q}^+(\gamma_1)$.
\item[\rm(ii)]
${M}^{\gamma_2+}_\beta$ is bounded from
$L^r(\mathbb{R}^{n+1},v^r)$
to $L^{q,\infty}(\mathbb{R}^{n+1},u^q)$.
\item[\rm(iii)]
$I^{\gamma_3+}_\beta$ is bounded from
$L^r(\mathbb{R}^{n+1},v^r)$
to $L^{q,\infty}(\mathbb{R}^{n+1},u^q)$.
\end{enumerate}
\end{theorem}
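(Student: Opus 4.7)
The plan is straightforward, since the statement is advertised as a direct consequence of Corollary~\ref{independence of time lag}(i) together with Theorems~\ref{weak type inequality} and~\ref{weak type Riesz potential}. I would prove the chain of equivalences by routing everything through the single condition $(u,v) \in TA_{r,q}^+(\gamma_i)$ for a varying time lag $\gamma_i$, exploiting the fact that the parabolic Muckenhoupt two-weight class is insensitive to the time lag under the $A_\infty^+$ assumption.

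First I would establish $(\mathrm{i}) \Longleftrightarrow (\mathrm{ii})$. Applying Corollary~\ref{independence of time lag}(i), which asserts that $(u,v) \in TA_{r,q}^+(\alpha)$ is independent of the choice of $\alpha \in (0,1)$ whenever $u \in A_\infty^+$, we see that $(u,v) \in TA_{r,q}^+(\gamma_1)$ if and only if $(u,v) \in TA_{r,q}^+(\gamma_2)$. Theorem~\ref{weak type inequality} (applied with time lag $\gamma_2$ and with the exponent relation $\beta = \frac{1}{r} - \frac{1}{q}$ already assumed) then converts the latter membership to the weak-type boundedness of $M^{\gamma_2+}_\beta$ from $L^r(\mathbb{R}^{n+1},v^r)$ to $L^{q,\infty}(\mathbb{R}^{n+1},u^q)$, which is exactly~(ii).

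The equivalence $(\mathrm{i}) \Longleftrightarrow (\mathrm{iii})$ is then entirely parallel: Corollary~\ref{independence of time lag}(i) again gives $(u,v) \in TA_{r,q}^+(\gamma_1) \Longleftrightarrow (u,v) \in TA_{r,q}^+(\gamma_3)$, after which Theorem~\ref{weak type Riesz potential} (with time lag $\gamma_3$) identifies this with the weak-type boundedness of $I^{\gamma_3+}_\beta$ from $L^r(\mathbb{R}^{n+1},v^r)$ to $L^{q,\infty}(\mathbb{R}^{n+1},u^q)$, giving~(iii). Combining the two chains yields the mutual equivalence of~(i), (ii), and~(iii).

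The only minor point requiring explicit verification will be that the hypothesis $u \in A_\infty^+$ is compatible with all three time lags appearing in the statement, since each of Corollary~\ref{independence of time lag}(i), Theorem~\ref{weak type inequality}, and Theorem~\ref{weak type Riesz potential} formally invokes such an assumption for the particular time lag under consideration. This is harmless: recalling that $A_\infty^+(\gamma) = \bigcup_{q \in (1,\infty)} A_q^+(\gamma)$ and that the one-weight parabolic Muckenhoupt class $A_q^+(\gamma)$ is itself independent of $\gamma \in (0,1)$ (as in \cite[Theorem~3.1]{km(am-2024)}), membership of $u$ in $A_\infty^+$ for one admissible time lag transfers to any other. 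Once this is noted, there is essentially no further obstacle, and the proof reduces to assembling the three cited results in the order indicated above.
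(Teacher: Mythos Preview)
Your proposal is correct and follows exactly the route the paper intends: the authors themselves state that the theorem is a direct consequence of Corollary~\ref{independence of time lag}(i) together with Theorems~\ref{weak type inequality} and~\ref{weak type Riesz potential}, and omit the details. Your additional remark that the $A_\infty^+$ hypothesis transfers between time lags via the one-weight theory is precisely the small bookkeeping point needed to make the argument airtight.
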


\begin{remark}
When $r\in(1,\infty)$, if we replace
the uncentered fractional maximal operator
${M}^{\gamma_2+}_\beta$
in Theorem~\ref{20241004.1030}(ii)
by the centered fractional maximal operator
$\mathcal{M}^{\gamma_2+}_\beta$,
then Theorem~\ref{20241004.1030} still holds.
\end{remark}

The following theorem is a direct consequence of
Theorems \ref{weak type inequality},
\ref{weak type inequality centered}(i),
\ref{weak type Riesz potential}, and
\ref{weighted inequality of Riesz potential}
and Corollaries \ref{independence of time lag}(i),
\ref{weighted inequality uncentered}, and
\ref{centered strong type inequality};
we omit the details.

\begin{theorem}\label{20241004.1125}
Let $\beta\in(0,1)$ and $1<r<q<\infty$ satisfy
$\beta=\frac{1}{r}-\frac{1}{q}$.
Let $\{\gamma_i\}_{i=1}^7$ be a sequence of $(0,1)$ and
$\omega$ be a weight on $\mathbb{R}^{n+1}$. Then
the following statements are mutually equivalent.
\begin{enumerate}
\item[\rm(i)]
$\omega\in
A_{r,q}^+(\gamma_1)$.
\item[\rm(ii)]
$M^{\gamma_2+}_\beta$ is
bounded from $L^r(\mathbb{R}^{n+1},\omega^r)$ to
$L^{q,\infty}(\mathbb{R}^{n+1},\omega^q)$.
\item[\rm(iii)]
$M^{\gamma_3+}_\beta$ is
bounded from $L^r(\mathbb{R}^{n+1},\omega^r)$ to
$L^{q}(\mathbb{R}^{n+1},\omega^q)$.
\item[\rm(iv)]
$\mathcal{M}^{\gamma_4+}_\beta$ is
bounded from $L^r(\mathbb{R}^{n+1},\omega^r)$ to
$L^{q,\infty}(\mathbb{R}^{n+1},\omega^q)$.
\item[\rm(v)]
$\mathcal{M}^{\gamma_5+}_\beta$ is
bounded from $L^r(\mathbb{R}^{n+1},\omega^r)$ to
$L^{q}(\mathbb{R}^{n+1},\omega^q)$.
\item [\rm(vi)]
$I^{\gamma_6+}_\beta$ is
bounded from $L^r(\mathbb{R}^{n+1},\omega^r)$ to
$L^{q,\infty}(\mathbb{R}^{n+1},\omega^q)$.
\item[\rm(vii)]
$I^{\gamma_7+}_\beta$ is
bounded from $L^r(\mathbb{R}^{n+1},\omega^r)$ to
$L^q(\mathbb{R}^{n+1},\omega^q)$.
\end{enumerate}
\end{theorem}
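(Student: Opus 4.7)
The plan is to prove the seven statements are equivalent by showing each of (ii) through (vii) is equivalent to (i), using the battery of results already established and the crucial fact that the parabolic Muckenhoupt class is independent of the choice of time lag. The payoff of the time-lag independence is that although the seven conditions involve seven potentially different parameters $\gamma_1,\dots,\gamma_7$ in $(0,1)$, we may freely replace each by a common $\gamma \in (0,1)$ when invoking the cited theorems.

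First I would handle the qualifying step: by \cite[Lemma 2.1(1)]{mhy(fm-2023)} and \cite[Lemma 7.4]{ks(na-2016)} (as recalled in the discussion preceding Corollary \ref{weighted inequality uncentered}), any $\omega\in A_{r,q}^+(\gamma)$ automatically lies in $A_\infty^+(\gamma)$. Consequently Corollary \ref{independence of time lag}(i), applied with $u=v=\omega$, yields
\[
A_{r,q}^+(\gamma_1)=A_{r,q}^+(\gamma_2)=\dots=A_{r,q}^+(\gamma_7),
\]
so the choice of $\gamma_i$ in (i) and in each of the operator-boundedness conditions is immaterial. This observation is the linchpin that turns the seven conditions with different time lags into a statement about a single Muckenhoupt class and a single family of operators at one and the same time lag.

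Next I would match each boundedness statement with its characterization. For (i)$\iff$(ii), Theorem \ref{weak type inequality} specialized to $u=v=\omega$ is exactly what is needed (after applying the first step so that the lag in (ii) equals the lag in (i)). For (i)$\iff$(iii), Corollary \ref{weighted inequality uncentered} gives the strong-type version. For (i)$\iff$(iv), Theorem \ref{weak type inequality centered}(i) with $u=v=\omega$ provides the weak-type centered characterization; for (i)$\iff$(v), Corollary \ref{centered strong type inequality} gives the strong-type centered version. Finally, for (i)$\iff$(vi), Theorem \ref{weak type Riesz potential} with $u=v=\omega$ yields the weak-type characterization for $I^{\gamma+}_\beta$, and for (i)$\iff$(vii), Theorem \ref{weighted inequality of Riesz potential} gives the strong-type version. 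Assembling these equivalences, all seven statements are mutually equivalent.

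I do not anticipate a genuine technical obstacle, since every equivalence is already proven in the paper for a fixed common lag; the only subtlety is a bookkeeping point: some cited results (e.g.\ Theorem \ref{weak type inequality} and Theorem \ref{weak type Riesz potential}) require the \emph{a priori} assumption $\omega\in A_\infty^+(\gamma)$, which I supply via the implication $A_{r,q}^+(\gamma)\subset A_\infty^+(\gamma)$ noted above. Once that is in hand, the proof is a straightforward concatenation of citations, and no further estimates are needed.
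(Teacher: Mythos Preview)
Your proposal is correct and follows essentially the same approach as the paper: the paper states that the theorem is a direct consequence of exactly the results you cite---Theorems \ref{weak type inequality}, \ref{weak type inequality centered}(i), \ref{weak type Riesz potential}, \ref{weighted inequality of Riesz potential} and Corollaries \ref{independence of time lag}(i), \ref{weighted inequality uncentered}, \ref{centered strong type inequality}---and omits the details. Your treatment of the bookkeeping point (supplying $\omega\in A_\infty^+(\gamma)$ via $A_{r,q}^+(\gamma)\subset A_\infty^+(\gamma)$ so that Corollary \ref{independence of time lag}(i) and the two-weight theorems apply) is exactly the step the paper leaves implicit.
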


\section{Applications to Parabolic Weighted Sobolev Embeddings}
\label{section6}

In this section, we establish the weighted
boundedness of the parabolic Riesz potentials and the
parabolic Bessel potentials in \cite{ar(sm-2001),ar(rjm-2002),
g(ajm-1977)} for some special parabolic Muckenhoupt
weights. Then we apply the results to show the corresponding
parabolic Sobolev embeddings.

Let $p\in[2,\infty)$ and $\beta\in(0,n+p)$.
For any $(x,t)\in\mathbb{R}^{n+1}$, let
\begin{align*}
h_\beta(x,t):=t^{\frac{\beta-n-p}{p(p-1)}}e^{-\frac{p-1}{p}
(\frac{|x|^p}{pt})^{\frac{1}{p-1}}}
\boldsymbol{1}_{(0,\infty)}(t).
\end{align*}
As noted in \cite{kmy(ma-2023)}, $h_p$ is a solution of the
doubly nonlinear parabolic partial differential equation
\eqref{20240903.1403} in $\mathbb{R}^{n+1}_+$.
In particular, if $p=\beta=2$, then $h_2$ is the
fundamental solution of the heat equation
$\frac{\partial u}{\partial t}-\Delta u=0$ in
$\mathbb{R}^{n+1}_+$. Notice that, for any given
$\gamma\in(0,1)$ and for any
$(y,s)\in\bigcup_{L\in(0,\infty)}R(\mathbf{0},0,L)^+(\gamma)$,
\begin{align}\label{20240814.2216}
\left|h_\beta(y,s)\right|\sim\frac{1}{[d_p((y,s),(\mathbf{0},0))]
^{(n+p)(1-\widetilde{\beta})}}
\end{align}
with the positive equivalence constants depending only
on $n$, $p$, $\gamma$, and $\beta$, where
\begin{align*}
\widetilde{\beta}:=1-\frac{n+p-\beta}{(p-1)(n+p)}\in(0,1).
\end{align*}
Let $\gamma\in[0,1)$. The \emph{parabolic
Riesz potential $\mathcal{I}^{\gamma+}_\beta$ with time lag}
is defined by setting, for any $f\in
L_{\mathrm{loc}}^1(\mathbb{R}^{n+1})$ and
$(x,t)\in\mathbb{R}^{n+1}$,
\begin{align*}
\mathcal{I}^{\gamma+}_\beta(f)(x,t):=
\int_{\bigcup_{L\in(0,\infty)}
R(\mathbf{0},0,L)^+(\gamma)}f(x-y,t-s)h_\beta(y,s)\,dy\,ds.
\end{align*}
According to \eqref{20240814.2216} and Theorem
\ref{weighted inequality of Riesz potential}, we easily
obtain the following proposition.
\begin{proposition}\label{parabolic Riesz potential 1977 modified}
Let $p\in[2,\infty)$, $\gamma\in(0,1)$, $\beta\in(0,n+p)$,
$1<r<q<\infty$ with
\begin{align}\label{20240929.2116}
\frac{1}{r}-\frac{1}{q}=1-\frac{n+p-\beta}{(p-1)(n+p)},
\end{align}
and $\omega$ be a weight on
$\mathbb{R}^{n+1}$. Then $\omega\in A_{r,q}^+(\gamma)$ if and
only if $\mathcal{I}^{\gamma+}_\beta$ is bounded from
$L^r(\mathbb{R}^{n+1},\omega^r)$ to
$L^q(\mathbb{R}^{n+1},\omega^q)$.
\end{proposition}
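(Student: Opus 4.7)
The plan is to reduce Proposition~\ref{parabolic Riesz potential 1977 modified} to Theorem~\ref{weighted inequality of Riesz potential} by exhibiting a pointwise comparison between the kernel of $\mathcal{I}^{\gamma+}_\beta$ and the kernel of $I^{\gamma+}_{\widetilde{\beta}}$. Indeed, setting
\begin{align*}
\widetilde{\beta}:=1-\frac{n+p-\beta}{(p-1)(n+p)},
\end{align*}
the hypothesis \eqref{20240929.2116} reads $\frac{1}{r}-\frac{1}{q}=\widetilde{\beta}$, and it is immediate from $p\in[2,\infty)$ and $\beta\in(0,n+p)$ that $\widetilde{\beta}\in(0,1)$. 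Thus the target range of exponents is exactly the one governed by the fractional integral $I^{\gamma+}_{\widetilde{\beta}}$ studied in Section~\ref{section5}.

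First, I would invoke \eqref{20240814.2216}, which asserts that, on the integration domain $\bigcup_{L\in(0,\infty)}R(\mathbf{0},0,L)^+(\gamma)$,
\begin{align*}
h_\beta(y,s)\sim\frac{1}{[d_p((y,s),(\mathbf{0},0))]^{(n+p)(1-\widetilde{\beta})}},
\end{align*}
with positive equivalence constants depending only on $n$, $p$, $\gamma$, and $\beta$. Combining this with Definition~\ref{parabolic Riesz potentials}, I obtain for any nonnegative $f$ and any $(x,t)\in\mathbb{R}^{n+1}$ the pointwise comparison
\begin{align*}
\mathcal{I}^{\gamma+}_\beta(f)(x,t)\sim I^{\gamma+}_{\widetilde{\beta}}(f)(x,t),
\end{align*}
again with implicit constants independent of $(x,t)$ and $f$. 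Splitting a general $f$ into its positive and negative parts and noting that $|\mathcal{I}^{\gamma+}_\beta(f)|\leq\mathcal{I}^{\gamma+}_\beta(|f|)$, the boundedness of $\mathcal{I}^{\gamma+}_\beta$ from $L^r(\mathbb{R}^{n+1},\omega^r)$ to $L^q(\mathbb{R}^{n+1},\omega^q)$ is equivalent to the same boundedness for $I^{\gamma+}_{\widetilde{\beta}}$.

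With this reduction in hand, both directions of the proposition follow by applying Theorem~\ref{weighted inequality of Riesz potential} with $\beta$ there replaced by $\widetilde{\beta}\in(0,1)$: that theorem characterizes the boundedness of $I^{\gamma+}_{\widetilde{\beta}}$ from $L^r(\mathbb{R}^{n+1},\omega^r)$ to $L^q(\mathbb{R}^{n+1},\omega^q)$ precisely by the condition $\omega\in A_{r,q}^+(\gamma)$, provided $\frac{1}{r}-\frac{1}{q}=\widetilde{\beta}$, which is exactly our assumption \eqref{20240929.2116}. There is no substantive obstacle here; the only subtlety is keeping track of the arithmetic identity that identifies $\widetilde{\beta}$ with $\frac{1}{r}-\frac{1}{q}$ and verifying $\widetilde{\beta}\in(0,1)$, both of which are direct from the hypotheses. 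Since this is essentially a translation result, I expect the proof to be short and I would present it by simply stating the pointwise equivalence and quoting Theorem~\ref{weighted inequality of Riesz potential}.
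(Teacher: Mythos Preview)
Your proposal is correct and follows exactly the approach the paper indicates: the paper states just before the proposition that it follows ``according to \eqref{20240814.2216} and Theorem~\ref{weighted inequality of Riesz potential}'', and your write-up simply unpacks this one-line justification by making the pointwise kernel equivalence and the reduction to $I^{\gamma+}_{\widetilde{\beta}}$ explicit.
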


For any measurable functions $f,g$ on
$\mathbb{R}^{n+1}$, the \emph{convolution $f*g$} of $f$ and $g$
is defined by setting, for any $(x,t)\in\mathbb{R}^{n+1}$,
\begin{align*}
(f*g)(x,t):=\int_{\mathbb{R}^{n+1}}f(x-y,t-s)g(y,s)\,dy\,ds.
\end{align*}
Recall that, for any $\beta\in(0,n+p)$ and $f\in
L_{\mathrm{loc}}^1(\mathbb{R}^{n+1})$,  the parabolic Riesz
potential $h_\beta*f$ of $f$ coincides with
$\mathcal{I}_\beta^{0+}(f)$. It was proved in
\cite{g(ajm-1977)} (see also \cite{ar(sm-2001), ar(rjm-2002)})
that, for any $1\leq r<q<\infty$ satisfying
\eqref{20240929.2116}, $\mathcal{I}_\beta^{0+}$ is bounded
from $L^r(\mathbb{R}^{n+1})$ to $L^q(\mathbb{R}^{n+1})$
if $r\in(1,\infty)$ and is bounded from
$L^r(\mathbb{R}^{n+1})$ to $L^{q,\infty}(\mathbb{R}^{n+1})$
if $r=1$.

Next, we consider the parabolic weighted boundedness of the
parabolic Riesz potential operator
$\mathcal{I}_\beta^{0+}$. Observe that, for any
$\gamma\in(0,1)$, $\beta\in(0,n+p)$, and $f\in
L_{\mathrm{loc}}^1(\mathbb{R}^{n+1})$,
$\mathcal{I}_\beta^{0+}(|f|)\geq
\mathcal{I}_\beta^{\gamma+}(|f|)$, which, together with
Proposition \ref{parabolic Riesz potential 1977 modified},
further implies the following proposition.

\begin{proposition}\label{parabolic Riesz potential 1977 modified 2}
Let $p\in[2,\infty)$, $\gamma\in(0,1)$, $\beta\in(0,n+p)$,
$1<r<q<\infty$ satisfy \eqref{20240929.2116}, and $\omega$ be
a weight on
$\mathbb{R}^{n+1}$. If the parabolic Riesz potential
operator $\mathcal{I}_\beta^{0+}$ is bounded
from $L^r(\mathbb{R}^{n+1},\omega^r)$ to
$L^q(\mathbb{R}^{n+1},\omega^q)$, then $\omega\in
A_{r,q}^+(\gamma)$.
\end{proposition}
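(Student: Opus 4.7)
The plan is to reduce the boundedness of $\mathcal{I}_\beta^{0+}$ to that of $\mathcal{I}_\beta^{\gamma+}$ by a simple pointwise domination, and then invoke the necessity direction of Proposition~\ref{parabolic Riesz potential 1977 modified} to conclude $\omega\in A_{r,q}^+(\gamma)$.

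First I would record the geometric observation that underlies the whole argument: for any $\gamma\in(0,1)$, the integration domain of $\mathcal{I}_\beta^{\gamma+}$, namely $\bigcup_{L\in(0,\infty)}R(\mathbf{0},0,L)^+(\gamma)$, is the open set $\{(y,s)\in\mathbb{R}^{n+1}:\ s>\gamma\|y\|_\infty^p\}$ (see Remark~\ref{range of integral}), while the domain of $\mathcal{I}_\beta^{0+}$ is the full upper half space $\{(y,s):\ s>0\}$. Since $\gamma>0$, the former is contained in the latter. Because $h_\beta\geq0$ on this upper half space, we obtain the pointwise inequality
\begin{align*}
\mathcal{I}_\beta^{\gamma+}(|f|)(x,t)
\leq\mathcal{I}_\beta^{0+}(|f|)(x,t)
\end{align*}
for every $f\in L_{\mathrm{loc}}^1(\mathbb{R}^{n+1})$ and $(x,t)\in\mathbb{R}^{n+1}$. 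Since $|\mathcal{I}_\beta^{\gamma+}(f)|\leq\mathcal{I}_\beta^{\gamma+}(|f|)$ by the triangle inequality for integrals, we also have $|\mathcal{I}_\beta^{\gamma+}(f)|\leq\mathcal{I}_\beta^{0+}(|f|)$.

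Then I would simply transfer the boundedness: assuming $\mathcal{I}_\beta^{0+}:L^r(\mathbb{R}^{n+1},\omega^r)\to L^q(\mathbb{R}^{n+1},\omega^q)$ is bounded, for any $f\in L^r(\mathbb{R}^{n+1},\omega^r)$,
\begin{align*}
\left\|\mathcal{I}_\beta^{\gamma+}(f)\right\|_{L^q(\mathbb{R}^{n+1},\omega^q)}
\leq\left\|\mathcal{I}_\beta^{0+}(|f|)\right\|_{L^q(\mathbb{R}^{n+1},\omega^q)}
\lesssim\left\||f|\right\|_{L^r(\mathbb{R}^{n+1},\omega^r)}
=\|f\|_{L^r(\mathbb{R}^{n+1},\omega^r)},
\end{align*}
so $\mathcal{I}_\beta^{\gamma+}$ is bounded from $L^r(\mathbb{R}^{n+1},\omega^r)$ to $L^q(\mathbb{R}^{n+1},\omega^q)$ under the same hypothesis \eqref{20240929.2116} on the exponents. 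Applying the necessity part of Proposition~\ref{parabolic Riesz potential 1977 modified} then yields $\omega\in A_{r,q}^+(\gamma)$.

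No substantive obstacle is expected: the entire argument is a domain-containment comparison coupled with the already-established Proposition~\ref{parabolic Riesz potential 1977 modified}. The only thing worth double-checking is that the containment of domains and the nonnegativity of $h_\beta$ are correctly stated, so that the pointwise bound $\mathcal{I}_\beta^{\gamma+}(|f|)\leq\mathcal{I}_\beta^{0+}(|f|)$ is unambiguous; this is immediate from the definitions and from the formula for $h_\beta$ restricted to $t>0$.
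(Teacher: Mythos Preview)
Your proposal is correct and follows exactly the paper's approach: the paper simply notes that $\mathcal{I}_\beta^{0+}(|f|)\geq\mathcal{I}_\beta^{\gamma+}(|f|)$ by domain containment and nonnegativity of $h_\beta$, and then invokes the necessity direction of Proposition~\ref{parabolic Riesz potential 1977 modified}. Your write-up just spells out the details of this one-line argument.
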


\begin{remark}\label{20240929,2144}
Let $p$, $\gamma$, $\beta$, $r$, $q$, and $\omega$ be the same
as in Proposition \ref{parabolic Riesz potential 1977 modified 2}.
An interesting question is whether or not the converse of
Proposition \ref{parabolic Riesz potential 1977 modified 2}
holds, that is, whether or not $\omega\in A_{r,q}^+(\gamma)$
implies that $\mathcal{I}_\beta^{0+}$ is bounded
from $L^r(\mathbb{R}^{n+1},\omega^r)$ to
$L^q(\mathbb{R}^{n+1},\omega^q)$.

\end{remark}

Let $1<r\leq q<\infty$. Recall that, for any given
$E\subset\mathbb{R}$ the \emph{one-sided off-diagonal
Muckenhoupt class $A_{r,q}^+(E)$} is defined to be the set of
all nonnegative locally integrable functions $\omega$ on $E$
such that
\begin{align*}
[\omega]_{A_{r,q}^+(E)}:=\sup_{\genfrac{}{}{0pt}{}{x\in
\mathbb{R},\,h\in(0,\infty)}{[x-h,x+h]\in E}}
\frac{1}{h}\int_{x-h}^x\left[\omega(y)\right]^q\,dy
\left\{\frac{1}{h}\int_x^{x+h}\left[\omega(y)\right]
^{-r'}\,dy\right\}^{\frac{q}{r'}}<\infty;
\end{align*}
see, for instance, \cite[(1.5)]{as(tams-1988)} when
$E:=(0,\infty)$. It can be easily verified that, if $\omega\in
A_{r,q}^+(\mathbb{R})$, then, for any $\gamma\in(0,1)$,
\begin{align}\label{20240930.1032}
\sup_{x\in\mathbb{R},\,h\in(0,\infty)}
\frac{1}{1-\gamma h}
\int_{x-h}^{x-\gamma h}\left[\omega(y)\right]^q\,dy
\left\{\frac{1}{1-\gamma h}\int_{x+\gamma h}^{x+h}
\left[\omega(y)\right]^{-r'}\,dy\right\}
^{\frac{q}{r'}}\lesssim[\omega]_{A_{r,q}^+(\mathbb{R})}
\end{align}
with the implicit positive constant depending only on
$\gamma$, $r$, and $q$. Note that the
{off-diagonal Muckenhoupt class $A_{r,q}(\mathbb{R}^n)$} is
defined in \eqref{20241003.1040}. We provide a partial
answer to the question in Remark \ref{20240929,2144} as
follows.

\begin{theorem}\label{parabolic Riesz potantial weighted boundedness Gopala Rao}
Let $p\in[2,\infty)$, $\beta\in(0,n+p)$, $1<r<q<\infty$ with
\begin{align*}
\frac{1}{r}-\frac{1}{q}=1-\frac{n+p-\beta}{(p-1)(n+p)}
=:\widetilde{\beta},
\end{align*}
and $\omega$ be a weight on $\mathbb{R}^{n+1}$.
Let $u\in A_{r,q}(\mathbb{R}^n)$, $v\in
A_{r,q}^+(\mathbb{R})$, and, for any
$(x,t)\in\mathbb{R}^{n+1}$, $\omega(x,t)=u(x)v(t)$.
Then the parabolic Riesz potential $\mathcal{I}_\beta^{0+}$ is
bounded from $L^r(\mathbb{R}^{n+1},\omega^r)$ to
$L^q(\mathbb{R}^{n+1},\omega^q)$.
\end{theorem}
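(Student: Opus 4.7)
The strategy is to exploit the tensor product structure $\omega(x,t)=u(x)v(t)$ to reduce the claim, up to a controllable tail, to the $A_{r,q}^+(\gamma)$-characterization already established in Proposition~\ref{parabolic Riesz potential 1977 modified}. The proof splits into three stages: (i) showing that $\omega\in A_{r,q}^+(\gamma)$ for every $\gamma\in(0,1)$; (ii) splitting $\mathcal{I}_\beta^{0+}=\mathcal{I}_\beta^{\gamma+}+\mathcal{T}_\gamma$ and invoking Proposition~\ref{parabolic Riesz potential 1977 modified} for the main piece $\mathcal{I}_\beta^{\gamma+}$; and (iii) bounding the tail $\mathcal{T}_\gamma$ whose kernel is supported on $E_\gamma:=\{(y,s)\in\mathbb{R}^{n+1}_+:s<\gamma\|y\|_\infty^p\}$, where $h_\beta$ enjoys strong exponential decay.

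For stage (i), fix a parabolic rectangle $R=Q(x_0,L)\times(t_0-L^p,t_0+L^p)$. The product structure gives
\[
\fint_{R^-(\gamma)}\omega^q\left[\fint_{R^+(\gamma)}\omega^{-r'}\right]^{q/r'}
=\left\{\fint_{Q(x_0,L)}u^q\left[\fint_{Q(x_0,L)}u^{-r'}\right]^{q/r'}\right\}\cdot T_\gamma(t_0,L),
\]
where the spatial factor is bounded by $[u]_{A_{r,q}(\mathbb{R}^n)}$. For the temporal factor $T_\gamma(t_0,L)$, the inclusions $[t_0-L^p,t_0-\gamma L^p]\subset[t_0-L^p,t_0]$ and $[t_0+\gamma L^p,t_0+L^p]\subset[t_0,t_0+L^p]$, combined with \eqref{20240930.1032}, yield $T_\gamma(t_0,L)\lesssim(1-\gamma)^{-(1+q/r')}[v]_{A_{r,q}^+(\mathbb{R})}$. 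Taking the supremum over $R$ gives $\omega\in A_{r,q}^+(\gamma)$. For stage (ii), the pointwise bound \eqref{20240814.2216} yields $\mathcal{I}_\beta^{\gamma+}|f|\lesssim I_{\widetilde{\beta}}^{\gamma+}|f|$, so Proposition~\ref{parabolic Riesz potential 1977 modified} immediately gives the desired $L^r(\omega^r)\to L^q(\omega^q)$ boundedness for $\mathcal{I}_\beta^{\gamma+}$.

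For stage (iii), I plan to dyadically decompose $E_\gamma=\bigcup_{k\ge k_0}E_{\gamma,k}$ with $E_{\gamma,k}:=\{2^k\le|y|^p/s<2^{k+1}\}$ and $k_0:=\lceil\log_2(1/\gamma)\rceil$, so that on each level $h_\beta(y,s)\lesssim e^{-c\,2^{k/(p-1)}}s^{(\beta-n-p)/(p(p-1))}\mathbf{1}_{E_{\gamma,k}}(y,s)$. Writing the $L^q(\omega^q)$-norm as the iterated norm $\|\cdot\|_{L^q_t(v^q;L^q_x(u^q))}$ via Fubini, each dyadic piece of $\mathcal{T}_\gamma$ factors naturally: a spatial convolution with a bounded, compactly-essentially-supported (relative to scale $s^{1/p}$) kernel, controlled by the spatial Hardy--Littlewood maximal operator which is bounded on $L^r(\mathbb{R}^n,u^r)$ since $u^r\in A_r(\mathbb{R}^n)$ (a consequence of $u\in A_{r,q}(\mathbb{R}^n)$); followed by a one-sided temporal integral in $s$ which, after the change of variable $\tau=t-s$, is a Weyl-type fractional integral whose weighted $L^r(v^r)\to L^q(v^q)$ boundedness is given by Andersen--Sawyer~\cite{as(tams-1988)} from $v\in A_{r,q}^+(\mathbb{R})$. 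Summing the dyadic pieces is possible because $\sum_{k\ge k_0}e^{-c\,2^{k/(p-1)}}<\infty$, which absorbs any polynomial losses in $k$ incurred along the way.

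\textbf{The main obstacle} will be stage (iii). Although $h_\beta$ decays exponentially on $E_\gamma$, it is not globally integrable on $\mathbb{R}^{n+1}_+$, so Young's inequality in $\mathbb{R}^{n+1}$ is unavailable and the direct comparison to $I_{\widetilde{\beta}}^{\gamma+}$ used in stage (ii) fails. The essential mechanism is that the tensor product weight $\omega=uv$ permits an \emph{iterated} (rather than joint) decoupling: Fubini separates the problem into a spatial weighted maximal estimate with $u$ and an independent one-sided temporal weighted fractional estimate with $v$, matching exactly the hypotheses $u\in A_{r,q}(\mathbb{R}^n)$ and $v\in A_{r,q}^+(\mathbb{R})$. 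Combining the contributions from $\mathcal{I}_\beta^{\gamma+}$ and $\mathcal{T}_\gamma$ then yields the claimed $L^r(\omega^r)\to L^q(\omega^q)$ boundedness of $\mathcal{I}_\beta^{0+}$.
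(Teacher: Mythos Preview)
Your stages (i) and (ii) are correct and coincide with the paper's opening moves. The difficulty is entirely in stage~(iii), and the plan you sketch there has a genuine gap.

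If on $E_{\gamma,k}=\{2^k\le|y|^p/s<2^{k+1}\}$ you bound the spatial integral by the Hardy--Littlewood maximal function, you necessarily pick up a volume factor $(2^ks)^{n/p}$, so the temporal kernel that remains has exponent
\[
\frac{\beta-n-p}{p(p-1)}+\frac{n}{p}
=-\frac{(1-\widetilde{\beta})(n+p)}{p}+\frac{n}{p}
=-1+\frac{\widetilde{\beta}(n+p)}{p}.
\]
This is a one-sided Weyl fractional integral of order $\widetilde{\beta}(n+p)/p$, not $\widetilde{\beta}$. Andersen--Sawyer's $A_{r,q}^+(\mathbb{R})$ characterization requires the order to equal $1/r-1/q=\widetilde{\beta}$, so for $n\ge1$ the hypothesis $v\in A_{r,q}^+(\mathbb{R})$ does not match the temporal operator you actually obtain, and the argument breaks. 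More fundamentally, the scheme ``no integrability gain in $x$, full gain in $t$'' cannot respect the parabolic scaling of $h_\beta$: the smoothing is distributed jointly across space and time, and the spatial scale $|y|\sim(2^ks)^{1/p}$ is tied to $s$, so the kernel is not a product kernel and no clean Fubini decoupling into (spatial maximal)$\,\circ\,$(temporal fractional) is available with the given weight hypotheses.

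The paper handles the tail by staying in the parabolic framework. It writes $E_\gamma=\bigcup_{j\ge1}(\Omega_j\setminus\Omega_{j-1})$ with $\Omega_j:=\bigcup_{L>0}R(\mathbf{0},0,L)^+(\gamma/2^j)$, and on each shell bounds $h_\beta$ by the parabolic fractional kernel of order $\widetilde{\beta}$ times a factor $\lesssim(2^j/\gamma)^{C}e^{-c\,2^{j/(p-1)}}$, giving pointwise
\[
\mathcal{I}_\beta^{0+}(|f|)\lesssim I_{\widetilde{\beta}}^{\gamma+}(|f|)
+\sum_{j\ge1}\left(\tfrac{2^j}{\gamma}\right)^{C}e^{-c\,2^{j/(p-1)}}
I_{\widetilde{\beta}}^{\gamma/2^j+}(|f|).
\]
The crucial dividend of the tensor structure is not a Fubini decoupling but rather that $\omega\in A_{r,q_1}^+(\rho)\cap A_{r,q_2}^+(\rho)$ \emph{uniformly for all} $\rho\in(0,\gamma]$ (via \eqref{20240930.1032}), with $q_1<q<q_2$ chosen from the self-improvement of $u\in A_{r,q}(\mathbb{R}^n)$. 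Tracking the $\rho$-dependence through the Welland inequality (Lemma~\ref{Welland inequality}) and the proof of Corollary~\ref{weighted inequality uncentered} then shows $\|I_{\widetilde{\beta}}^{\gamma/2^j+}(|f|)\|_{L^q(\omega^q)}$ grows only polynomially in $2^j$, which the exponential factor absorbs.
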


\begin{proof}
Fix $\gamma\in(0,1)$ and $f\in L^r(\mathbb{R}^{n+1},
\omega^r)$. From the self-improving property of
$A_{r,q}(\mathbb{R}^n)$ (see, for instance, \cite[Lemma
3.4.2]{lushanzhen-book}), we deduce that there exists
$\delta_0\in(0,\infty)$, depending only on $n$, $r$, $q$, and
$[u]_{A_{r,q}(\mathbb{R}^n)}$, such that, for
any $\delta\in(0,\delta_0)$, $u\in
A_{r,q+\delta}(\mathbb{R}^n)$. Choose
$\epsilon\in(0,\min\{\widetilde{\beta},\,
1-\widetilde{\beta}\})$ such that
\begin{align*}
\frac{1}{\frac{1}{r}-(\widetilde{\beta}+\epsilon)}-q
\in(0,\delta_0)
\end{align*}
and let $q_1,q_2\in(0,\infty)$ satisfy
\begin{align*}
\frac{1}{q_1}=\frac{1}{r}-(\widetilde{\beta}-\epsilon)\ \
\mbox{and}\ \
\frac{1}{q_2}=\frac{1}{r}-(\widetilde{\beta}+\epsilon).
\end{align*}
Then $1<q_1<q<q_2<q+\delta_0<\infty$. Therefore,
$u\in A_{r,q_2}(\mathbb{R}^n)$. On the other hand, by the
H\"older inequality, we conclude that $u\in
A_{r,q_1}(\mathbb{R}^n)$. Combining these and
\eqref{20240930.1032}, we find that, for any
$\rho\in(0,\gamma]$,
\begin{align}\label{20240923.1121}
\omega\in A_{r,q_1}^+(\rho)\cap
A_{r,q}^+(\rho)\cap A_{r,q_2}^+(\rho).
\end{align}
For any $j\in\mathbb{Z}_+$, define
\begin{align*}
\Omega_j:=\bigcup_{L\in(0,\infty)}
R(\mathbf{0},0,L)^+\left(\frac{\gamma}{2^j}\right).
\end{align*}
From Remark \ref{range of integral}, it follows that, for any
$j\in\mathbb{N}$ and $(y,s)\in\Omega_j\setminus\Omega_{j-1}$,
\begin{align*}
\frac{\gamma}{n^{\frac{p}{2}}2^j}|y|^p\leq\frac{\gamma}{2^j}
\|y\|_\infty^p<s\leq\frac{\gamma}{2^{j-1}}\|y\|_\infty^p\leq
\frac{\gamma}{2^{j-1}}|y|^p
\end{align*}
and hence
\begin{align*}
h_\beta(y,s)&=s^{\frac{\beta-n-p}{p(p-1)}}
e^{-\frac{p-1}{p}(\frac{|y|^p}{ps})^{\frac{1}{p-1}}}\\
&\leq\left[\left(\frac{2^j}{\gamma}\right)
^{\frac{1}{p}}+1\right]^{\frac{n+p-\beta}{p-1}}
\frac{1}{[d_p((y,s),(\mathbf{0},0))]^{\frac{n+p-\beta}{p-1}}}
e^{-\frac{p-1}{p}(\frac{2^{j-1}}{p\gamma})^{\frac{1}{p-1}}},
\end{align*}
which, together with the monotone convergence theorem and
\eqref{20240814.2216}, further implies that, for any
$(x,t)\in\mathbb{R}^{n+1}$,
\begin{align}\label{20240923.2105}
\mathcal{I}_\beta^{0+}(|f|)(x,t)&=
\mathcal{I}_\beta^{\gamma+}(|f|)(x,t)+\sum_{j\in\mathbb{N}}
\int_{\Omega_j\setminus\Omega_{j-1}}|f(y,s)|h_\beta(x-y,t-s)
\,dy\,ds\\
&\leq\left[\left(\frac{1}{\gamma}\right)^{\frac{1}{p}}+1\right]
^{\frac{n+p-\beta}{p-1}}I_{\widetilde{\beta}}^{\gamma+}
(|f|)(x,t)\notag\\
&\quad+\sum_{j\in\mathbb{N}}\left[\left(\frac{2^j}{\gamma}
\right)^{\frac{1}{p}}+1\right]^{\frac{n+p-\beta}{p-1}}
e^{-\frac{p-1}{p}(\frac{2^j}{2p\gamma})^{\frac{1}{p-1}}}
I_{\widetilde{\beta}}^{\frac{\gamma}{2^j}+}(|f|)(x,t).\notag
\end{align}
Combining this, \eqref{20240923.1121}, and Theorem
\ref{weighted inequality of Riesz potential}, we conclude
that, to show that $\mathcal{I}_\beta^{0+}$ is
bounded from $L^r(\mathbb{R}^{n+1},\omega^r)$ to
$L^q(\mathbb{R}^{n+1},\omega^q)$, it suffices to prove that
there exists a positive constant $C$, independent of
$f$, such that
\begin{align}\label{20240918.2142}
\sum_{j\in\mathbb{N}}\left[\left(\frac{2^j}{\gamma}
\right)^{\frac{1}{p}}+1\right]^{\frac{n+p-\beta}{p-1}}
e^{-\frac{p-1}{p}(\frac{2^j}{2p\gamma})^{\frac{1}{p-1}}}
\left\|I_{\widetilde{\beta}}^{\frac{\gamma}{2^j}+}(|f|)\right\|
_{L^q(\mathbb{R}^{n+1},\omega^q)}\leq
C\|f\|_{L^r(\mathbb{R}^{n+1},\omega^r)}.
\end{align}

To show \eqref{20240918.2142}, fix $j\in\mathbb{N}$. From an
argument similar to that used in the proof of Lemma
\ref{Welland inequality} with $\gamma$ therein replaced by
$\frac{\gamma}{2^j}$, we infer that, for any
$(x,t)\in\mathbb{R}^{n+1}$,
\begin{align*}
I_{\widetilde{\beta}}^{\frac{\gamma}{2^j}+}(|f|)(x,t)\leq
\frac{2^{n(1+\epsilon-\widetilde{\beta})+1}}
{(\frac{\gamma}{2^j})^{\frac{2(n+p)(1+\epsilon-
\widetilde{\beta})}{p}}[(\frac{\gamma}{2^j})
^{-\frac{(n+p)\epsilon}{p}}-1]}
\mathcal{M}_{\widetilde{\beta}-\epsilon}
^{(\frac{\gamma}{2^j})^2+}(f)(x,t)
\mathcal{M}_{\widetilde{\beta}+\epsilon}
^{(\frac{\gamma}{2^j})^2+}(f)(x,t),
\end{align*}
which, together with the H\"older inequality and an argument
similar to that used in the proof of \eqref{20240819.1730},
further implies that
\begin{align}\label{20240919.2110}
\left\|I_{\widetilde{\beta}}^{\frac{\gamma}{2^j}+}(|f|)
\right\|_{L^q(\mathbb{R}^{n+1},\omega^q)}&\leq
2^{n(1+\epsilon-\widetilde{\beta})+1}\frac{(\frac{2^j}{\gamma})
^\frac{2(n+p)(1+\epsilon-\widetilde{\beta})}{p}}
{(\frac{2^j}{\gamma})^{\frac{(n+p)\epsilon}{p}}-1}
\left\|\mathcal{M}_{\widetilde{\beta}-\epsilon}
^{(\frac{\gamma}{2^j})^2+}(f)
\mathcal{M}_{\widetilde{\beta}+\epsilon}
^{(\frac{\gamma}{2^j})^2+}(f)\right\|
_{L^q(\mathbb{R}^{n+1},\omega^q)}\\
&\leq2^{n(1+\epsilon-\widetilde{\beta})+1}
\frac{(\frac{2^j}{\gamma})
^\frac{2(n+p)(1+\epsilon-\widetilde{\beta})}{p}}
{(\frac{2^j}{\gamma})^{\frac{(n+p)\epsilon}{p}}-1}\notag\\
&\quad\times\left\|\mathcal{M}_{\widetilde{\beta}-\epsilon}
^{(\frac{\gamma}{2^j})^2+}(f)\right\|
_{L^{q_1}(\mathbb{R}^{n+1},\omega^{q_1})}^\frac{1}{2}
\left\|\mathcal{M}_{\widetilde{\beta}+\epsilon}
^{(\frac{\gamma}{2^j})^2+}(f)\right\|
_{L^{q_2}(\mathbb{R}^{n+1},\omega^{q_2})}^\frac{1}{2}\notag\\
&\leq2^{n(1+\epsilon-\widetilde{\beta})+1}
\left[\frac{1-\frac{(\frac{\gamma}{2^j})^2}{4}}
{1-(\frac{\gamma}{2^j})^2}\right]
^{[1-(\widetilde{\beta}-\epsilon)]+
[1-(\widetilde{\beta}+\epsilon)]}\frac{(\frac{2^j}{\gamma})
^\frac{2(n+p)(1+\epsilon-\widetilde{\beta})}{p}}
{(\frac{2^j}{\gamma})^{\frac{(n+p)\epsilon}{p}}-1}\notag\\
&\quad\times\left\|M_{\widetilde{\beta}-\epsilon}
^{\frac{(\frac{\gamma}{2^j})^2}{4}+}(f)\right\|
_{L^{q_1}(\mathbb{R}^{n+1},\omega^{q_1})}^\frac{1}{2}
\left\|M_{\widetilde{\beta}+\epsilon}
^{\frac{(\frac{\gamma}{2^j})^2}{4}+}(f)\right\|
_{L^{q_2}(\mathbb{R}^{n+1},\omega^{q_2})}^\frac{1}{2}\notag\\
&\leq2^{n(1+\epsilon-\widetilde{\beta})+1}
\left[\frac{4-\gamma^2}{4-4\gamma^2}\right]^2
\frac{(\frac{2^j}{\gamma})
^\frac{2(n+p)(1+\epsilon-\widetilde{\beta})}{p}}
{(\frac{2^j}{\gamma})^{\frac{(n+p)\epsilon}{p}}-1}\notag\\
&\quad\times\left\|M_{\widetilde{\beta}-\epsilon}
^{\frac{(\frac{\gamma}{2^j})^2}{4}+}(f)\right\|
_{L^{q_1}(\mathbb{R}^{n+1},\omega^{q_1})}^\frac{1}{2}
\left\|M_{\widetilde{\beta}+\epsilon}
^{\frac{(\frac{\gamma}{2^j})^2}{4}+}(f)\right\|
_{L^{q_2}(\mathbb{R}^{n+1},\omega^{q_2})}^\frac{1}{2}.\notag
\end{align}
Applying \eqref{20240923.1121} and an argument similar to that
used in the proof of Corollary
\ref{weighted inequality uncentered}, we find
that there exists a positive constant $C_1$, depending only on
$n$, $p$, $\gamma$, $r$, $q$, $[u]_{A_{r,q}(\mathbb{R}^n)}$,
and $[v]_{A_{r,q}^+(\mathbb{R})}$, such that
\begin{align*}
\max\left\{\left\|M_{\widetilde{\beta}-\epsilon}
^{\frac{(\frac{\gamma}{2^j})^2}{4}+}(f)\right\|
_{L^{q_1}(\mathbb{R}^{n+1},\omega^{q_1})},\
\left\|M_{\widetilde{\beta}+\epsilon}
^{\frac{(\frac{\gamma}{2^j})^2}{4}+}(f)\right\|
_{L^{q_2}(\mathbb{R}^{n+1},\omega^{q_2})}\right\}
\leq C_1\|f\|_{L^r(\mathbb{R}^{n+1},\omega^r)}.
\end{align*}
Combining this and \eqref{20240919.2110}, we obtain
\begin{align*}
\left\|I_{\widetilde{\beta}}^{\frac{\gamma}{2^j}+}(|f|)\right\|
_{L^q(\mathbb{R}^{n+1},\omega^q)}\lesssim
\frac{(\frac{2^j}{\gamma})
^\frac{2(n+p)(1+\epsilon-\widetilde{\beta})}{p}}
{(\frac{2^j}{\gamma})^{\frac{(n+p)\epsilon}{p}}-1}
\|f\|_{L^r(\mathbb{R}^{n+1},\omega^r)},
\end{align*}
where the implicit positive constant is independent of
$f$ and $j$. From this and \eqref{20240919.2110}, we deduce
that
\begin{align*}
&\sum_{j\in\mathbb{N}}\left[\left(\frac{2^j}{\gamma}
\right)^{\frac{1}{p}}+1\right]^{\frac{n+p-\beta}{p-1}}
e^{-\frac{p-1}{p}(\frac{2^j}{2p\gamma})^{\frac{1}{p-1}}}
\left\|I_{\widetilde{\beta}}^{\frac{\gamma}{2^j}+}(|f|)\right\|
_{L^q(\mathbb{R}^{n+1},\omega^q)}\\
&\quad\lesssim\sum_{j\in\mathbb{N}}
\left[\left(\frac{2^j}{\gamma}\right)^{\frac{1}{p}}+1\right]
^{\frac{n+p-\beta}{p-1}}\frac{(\frac{2^j}{\gamma})
^\frac{2(n+p)(1+\epsilon-\widetilde{\beta})}{p}}
{(\frac{2^j}{\gamma})^{\frac{(n+p)\epsilon}{p}}-1}
e^{-\frac{p-1}{p}(\frac{2^j}{2p\gamma})^{\frac{1}{p-1}}}
\|f\|_{L^r(\mathbb{R}^{n+1},\omega^r)}\\
&\quad\lesssim\|f\|_{L^r(\mathbb{R}^{n+1},\omega^r)},
\end{align*}
and hence \eqref{20240918.2142} holds, which completes the
proof of Theorem
\ref{parabolic Riesz potantial weighted boundedness Gopala Rao}.
\end{proof}

\begin{remark}\label{remark on parabolic Riesz potantial weighted boundedness Gopala Rao}
\begin{enumerate}
\item[(i)] Theorem
\ref{parabolic Riesz potantial weighted boundedness Gopala Rao}
when $\omega\equiv1$ coincides with \cite[Theorem
3.1]{g(ajm-1977)} .

\item[(ii)] For any given $E\subset\mathbb{R}^{n+1}$ and
$1<r<q<\infty$, define $\mathcal{A}_{r,q}(E)$ to be the set of
all nonnegative locally integrable functions $\omega$ on $E$
such that
\begin{align*}
[\omega]_{\mathcal{A}_{r,q}(E)}:=
\sup_{\genfrac{}{}{0pt}{}{R\in\mathcal{R}_p^{n+1}}{R\subset
E}}\fint_R\omega\left(\fint_R\omega^{-r'}\right)
^{\frac{q}{r'}}<\infty.
\end{align*}
Obviously, $\mathcal{A}_{r,q}(\mathbb{R}^{n+1})\subset
A_{r,q}^+(\mathbb{R}^{n+1})$. By \eqref{20240923.2105}, the
fact that $(\mathbb{R}^{n+1},d_p,|\cdot|)$ is a space of
homogeneous type, and  the weighted boundedness of fractional
integrals on spaces of homogeneous type (see, for instance,
\cite[Theorem 3.3]{k(ms-2014)}), we conclude that, if we
replace the condition that there exist $u\in
A_{r,q}(\mathbb{R}^n)$ and $v\in A_{r,q}^+(\mathbb{R})$
satisfying $\omega(x,t)=u(x)v(t)$ by
$\omega\in\mathcal{A}_{r,q}(\mathbb{R}^{n+1})$, then the
conclusion of Theorem
\ref{parabolic Riesz potantial weighted boundedness Gopala Rao}
still holds, that is, both $\mathcal{I}_\beta^{0+}$ and
$\mathcal{G}_\beta^{0+}$ are bounded from
$L^r(\mathbb{R}^{n+1},\omega^r)$ to
$L^q(\mathbb{R}^{n+1},\omega^q)$.
\end{enumerate}
\end{remark}

In what follows, we fix $p=2$. Let
$\beta\in(0,n+2)$, $q\in[1,\infty)$, and $\omega$ be a
weight on $\mathbb{R}^{n+1}$.
The \emph{weighted parabolic Sobolev space
$W^{\beta,q}(\mathbb{R}^{n+1},\omega)$} is defined by
setting
\begin{align*}
W^{\beta,q}(\mathbb{R}^{n+1},\omega):=\left\{h_\beta*g:\
g\in L^q(\mathbb{R}^{n+1},\omega)\right\}.
\end{align*}
For any $f\in W^{\beta,q}(\mathbb{R}^{n+1},\omega)$,
define $\|f\|_{W^{\beta,q}(\mathbb{R}^{n+1},\omega)}
:=\|g\|_{L^q(\mathbb{R}^{n+1},\omega)}$, where $g\in
L^q(\mathbb{R}^{n+1},\omega)$ satisfying that $f=h_\beta*g$.
Theorem \ref{parabolic Riesz potantial weighted boundedness Gopala Rao}
and Remark \ref{remark on parabolic Riesz potantial weighted boundedness Gopala Rao}(ii)
immediately imply the following parabolic weighted Sobolev
embedding result and we omit the details.

\begin{corollary}\label{Sobolev embedding}
Let $\beta\in(0,n+2)$, $1<r<q<\infty$ with
$\frac{1}{r}-\frac{1}{q}=\frac{\beta}{n+2}$,
and $\omega$ be a weight on
$\mathbb{R}^{n+1}$. If either of the following two conditions
holds:
\begin{enumerate}
\item[\rm(i)] there exist $u\in A_{r,q}(\mathbb{R}^n)$ and
$v\in A_{r,q}^+(\mathbb{R})$ such that $\omega(x,t)=u(x)v(t)$;

\item[\rm(ii)] $\omega\in\mathcal{A}_{r,q}(\mathbb{R}^{n+1})$,
\end{enumerate}
then $W^{\beta,r}(\mathbb{R}^{n+1},\omega^r)\subset
L^q(\mathbb{R}^{n+1},\omega^q)$. Moreover, there exists
a positive constant $C$ such that,
for any $f\in W^{\beta,r}(\mathbb{R}^{n+1},\omega^r)$,
\begin{align*}
\|f\|_{L^q(\mathbb{R}^{n+1},\omega^q)}\leq
C\|f\|_{W^{\beta,r}(\mathbb{R}^{n+1},\omega^r)}.
\end{align*}
\end{corollary}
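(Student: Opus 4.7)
The plan is to recognize the weighted parabolic Sobolev space as the range of the parabolic Riesz potential, and then invoke the weighted boundedness established in Theorem~\ref{parabolic Riesz potantial weighted boundedness Gopala Rao} (under hypothesis (i)) or in Remark~\ref{remark on parabolic Riesz potantial weighted boundedness Gopala Rao}(ii) (under hypothesis (ii)). More precisely, I would begin by unpacking the definition of $W^{\beta,r}(\mathbb{R}^{n+1},\omega^r)$: any $f$ in this space is of the form $f=h_\beta*g$ for some $g\in L^r(\mathbb{R}^{n+1},\omega^r)$, and
\begin{align*}
\|f\|_{W^{\beta,r}(\mathbb{R}^{n+1},\omega^r)}
=\|g\|_{L^r(\mathbb{R}^{n+1},\omega^r)}.
\end{align*}
Since $h_\beta$ is supported in $\{t>0\}$ (in particular in the region $\bigcup_{L\in(0,\infty)}R(\mathbf{0},0,L)^+(0)$), we have $h_\beta*g=\mathcal{I}_\beta^{0+}(g)$ pointwise.

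Next, I would verify that the parameter condition matches the one required by Theorem~\ref{parabolic Riesz potantial weighted boundedness Gopala Rao}. With the convention $p=2$ fixed throughout the Sobolev section, the exponent in that theorem reads
\begin{align*}
\widetilde{\beta}=1-\frac{n+p-\beta}{(p-1)(n+p)}
=1-\frac{n+2-\beta}{n+2}=\frac{\beta}{n+2},
\end{align*}
so the hypothesis $\frac{1}{r}-\frac{1}{q}=\frac{\beta}{n+2}$ is exactly the one required to apply that boundedness statement.

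Then I would apply the corresponding boundedness result directly: under hypothesis (i), Theorem~\ref{parabolic Riesz potantial weighted boundedness Gopala Rao} gives a positive constant $C$, independent of $g$, such that
\begin{align*}
\left\|\mathcal{I}_\beta^{0+}(g)\right\|_{L^q(\mathbb{R}^{n+1},\omega^q)}
\leq C\|g\|_{L^r(\mathbb{R}^{n+1},\omega^r)};
\end{align*}
under hypothesis (ii), the same inequality is provided by Remark~\ref{remark on parabolic Riesz potantial weighted boundedness Gopala Rao}(ii). Substituting $g$ back via $f=h_\beta*g=\mathcal{I}_\beta^{0+}(g)$ and using the definition of the Sobolev norm yields
\begin{align*}
\|f\|_{L^q(\mathbb{R}^{n+1},\omega^q)}
\leq C\|f\|_{W^{\beta,r}(\mathbb{R}^{n+1},\omega^r)},
\end{align*}
which is the claimed embedding together with the quantitative estimate.

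There is no real obstacle to overcome here, since the heavy lifting has already been done in Section~\ref{section6}; the only points to exercise care on are (a) making sure the integration domain in the definition of $\mathcal{I}_\beta^{0+}$ together with the support of $h_\beta$ makes the identification $h_\beta*g=\mathcal{I}_\beta^{0+}(g)$ rigorous (this follows from $h_\beta$ vanishing for $t\le 0$), and (b) bookkeeping the exponent identity $\widetilde{\beta}=\beta/(n+2)$ when $p=2$ so that Theorem~\ref{parabolic Riesz potantial weighted boundedness Gopala Rao} applies verbatim. After these verifications, the proof amounts to chaining the two definitions with the boundedness inequality.
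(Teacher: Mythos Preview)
Your proposal is correct and matches the paper's approach exactly: the paper states that the corollary follows immediately from Theorem~\ref{parabolic Riesz potantial weighted boundedness Gopala Rao} and Remark~\ref{remark on parabolic Riesz potantial weighted boundedness Gopala Rao}(ii) and omits the details, and you have supplied precisely those details (the identification $h_\beta*g=\mathcal{I}_\beta^{0+}(g)$ via the support of $h_\beta$, the exponent check $\widetilde{\beta}=\beta/(n+2)$ when $p=2$, and the direct application of the boundedness results).
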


We denote by $\mathcal{S}(\mathbb{R}^{n+1})$ the space of
all \emph{Schwartz functions} on
$\mathbb{R}^{n+1}$ equipped with a well-known topology
determined by a countable family of norms and by $\mathcal{S}'(\mathbb{R}^{n+1})$
the space of all \emph{tempered
distributions} equipped with the
weak-$*$ topology. In addition, let
$\mathcal{S}'(\mathbb{R}^{n+1}_+):=
\mathcal{S}'(\mathbb{R}^{n+1})|_{\mathbb{R}^{n+1}_+}$,
namely the restriction of $\mathcal{S}'(\mathbb{R}^{n+1})$
on $\mathbb{R}^{n+1}_+$. Using both the Fourier transform
formula of $h_\beta$ with $\beta\in(0,n+2)$ (see
\cite[(2.4)]{g(ajm-1977)}) and several elementary properties of
the Fourier transform and replacing $\omega$ and $f$,
respectively, by $\omega(x,t)\boldsymbol{1}_{(0,\infty)}(t)$
and $f(x,t)\boldsymbol{1}_{(0,\infty)}(t)$ in Theorem
\ref{parabolic Riesz potantial weighted boundedness Gopala Rao}
and Remark
\ref{remark on parabolic Riesz potantial weighted boundedness Gopala Rao}(ii),
we obtain the following application of Theorem
\ref{parabolic Riesz potantial weighted boundedness Gopala Rao},
which presents a priori estimate for the
nonhomogeneous heat equations. We omit the details.

\begin{corollary}\label{priori estimate}
Let $1<r<q<\infty$ with $\frac{1}{r}-\frac{1}{q}=
\frac{2}{n+2}$, $\omega$ be a weight on $\mathbb{R}^{n+1}_+$,
and $f\in
L^r(\mathbb{R}^{n+1}_+,\omega^r)$. If either of the
following two conditions holds:
\begin{enumerate}
\item[\rm(i)] there exist $u\in A_{r,q}(\mathbb{R}^n)$ and
$v\in A_{r,q}^+(\mathbb{R}_+)$ such that
$\omega(x,t)=u(x)v(t)$,

\item[\rm(ii)] $\omega\in\mathcal{A}_{r,q}
(\mathbb{R}^{n+1}_+)$
\end{enumerate}
and if $g\in\mathcal{S}'(\mathbb{R}^{n+1}_+)
\cap L^1_{\mathrm{loc}}(\mathbb{R}^{n+1}_+)$ satisfies
\begin{align*}
\displaystyle
\begin{cases}
\frac{\partial g}{\partial t}(x,t)-\Delta g(x,t)=f(x,t),&
(x,t)\in\mathbb{R}^{n+1}_+,\\
\displaystyle\lim_{t\to0^+}g(x,t)=0,&x\in\mathbb{R}^n,
\end{cases}
\end{align*}
then $g\in L^q(\mathbb{R}^{n+1}_+,\omega^q)$. Moreover, there
exists a positive constant $C$ such that
\begin{align*}
\|g\|_{L^q(\mathbb{R}^{n+1}_+,\omega^q)}\leq
C\|f\|_{L^r(\mathbb{R}^{n+1}_+,\omega^r)}.
\end{align*}
\end{corollary}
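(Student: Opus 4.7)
My plan is to reduce the problem to a convolution estimate on all of $\mathbb{R}^{n+1}$ and then invoke Theorem \ref{parabolic Riesz potantial weighted boundedness Gopala Rao} (or its Remark \ref{remark on parabolic Riesz potantial weighted boundedness Gopala Rao}(ii)) with the parameters $p=2$ and $\beta=2$. First I would set $\widetilde{f}(x,t):=f(x,t)\boldsymbol{1}_{(0,\infty)}(t)$ and $\widetilde{\omega}(x,t):=\omega(x,t)\boldsymbol{1}_{(0,\infty)}(t)$, so that $\|\widetilde{f}\|_{L^r(\mathbb{R}^{n+1},\widetilde{\omega}^r)}=\|f\|_{L^r(\mathbb{R}^{n+1}_+,\omega^r)}$. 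Under hypothesis (i), the one-sided weight $v\in A_{r,q}^+(\mathbb{R}_+)$ extended by zero for $t\le 0$ still lies in $A_{r,q}^+(\mathbb{R})$ (only parabolic rectangles with $R^-(\gamma)\subset\{t>0\}$ contribute nontrivially to the supremum); under hypothesis (ii), the extension of $\omega$ by zero lies in $\mathcal{A}_{r,q}(\mathbb{R}^{n+1})$. In either case the hypotheses of Theorem \ref{parabolic Riesz potantial weighted boundedness Gopala Rao} or its Remark \ref{remark on parabolic Riesz potantial weighted boundedness Gopala Rao}(ii) apply to $\widetilde{\omega}$ on $\mathbb{R}^{n+1}$.

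Next I would identify the solution $g$ via the fundamental solution. Extending $g$ by zero for $t\le 0$, denoted $\widetilde{g}$, the initial condition $\lim_{t\to 0^+}g(x,t)=0$ and $g\in L^1_{\mathrm{loc}}$ ensure that $\widetilde{g}\in\mathcal{S}'(\mathbb{R}^{n+1})\cap L^1_{\mathrm{loc}}(\mathbb{R}^{n+1})$ and that
\begin{align*}
\frac{\partial \widetilde{g}}{\partial t}-\Delta \widetilde{g}=\widetilde{f}
\end{align*}
holds in $\mathcal{S}'(\mathbb{R}^{n+1})$. Taking the full space-time Fourier transform and using the explicit formula for $\widehat{h_2}$ from \cite[(2.4)]{g(ajm-1977)}, the symbol $i\tau+|\xi|^2$ is nonzero away from the origin. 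Since both $\widetilde{g}$ and $h_2*\widetilde{f}$ are supported in $\{t\ge 0\}$, the Gaussian-type distributions in the kernel of the heat operator (which are supported on the whole line in $t$) are ruled out, so I can divide by the symbol and conclude
\begin{align*}
\widetilde{g}=h_2*\widetilde{f}=\mathcal{I}_2^{0+}(\widetilde{f})
\end{align*}
as tempered distributions.

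Finally, since with $p=2$ and $\beta=2$ one has
\begin{align*}
\widetilde{\beta}=1-\frac{n+p-\beta}{(p-1)(n+p)}=1-\frac{n}{n+2}=\frac{2}{n+2}=\frac{1}{r}-\frac{1}{q},
\end{align*}
Theorem \ref{parabolic Riesz potantial weighted boundedness Gopala Rao} (under (i)) or Remark \ref{remark on parabolic Riesz potantial weighted boundedness Gopala Rao}(ii) (under (ii)) applied to $\widetilde{\omega}$ yields
\begin{align*}
\|\widetilde{g}\|_{L^q(\mathbb{R}^{n+1},\widetilde{\omega}^q)}=\|\mathcal{I}_2^{0+}(\widetilde{f})\|_{L^q(\mathbb{R}^{n+1},\widetilde{\omega}^q)}\le C\|\widetilde{f}\|_{L^r(\mathbb{R}^{n+1},\widetilde{\omega}^r)},
\end{align*}
and restricting both sides to $\mathbb{R}^{n+1}_+$ gives the required bound.

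The main obstacle is the rigorous justification of $\widetilde{g}=h_2*\widetilde{f}$ in the tempered distributional sense: the Cauchy problem for the heat equation in $\mathcal{S}'(\mathbb{R}^{n+1})$ is not uniquely solvable in general (e.g., Tychonoff-type backward Gaussian solutions), so the argument must leverage one-sided support in time together with the explicit Fourier symbol of $h_2$ to eliminate these pathological kernels. Once this representation is secured, the estimate reduces immediately to the weighted boundedness of the parabolic Riesz potential already established.
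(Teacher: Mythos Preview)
Your proposal is correct and follows essentially the same route as the paper: the paper's proof sketch (immediately preceding the corollary) says to use the Fourier transform formula of $h_\beta$ from \cite[(2.4)]{g(ajm-1977)}, replace $\omega$ and $f$ by $\omega(x,t)\boldsymbol{1}_{(0,\infty)}(t)$ and $f(x,t)\boldsymbol{1}_{(0,\infty)}(t)$, and then invoke Theorem~\ref{parabolic Riesz potantial weighted boundedness Gopala Rao} and Remark~\ref{remark on parabolic Riesz potantial weighted boundedness Gopala Rao}(ii), which is exactly your plan. You have in fact supplied more detail than the paper (which simply omits the details), and your identification of the uniqueness issue for the distributional heat equation as the main technical point is apt; the one minor imprecision is that your parenthetical justification for the zero-extension of $v$ refers to ``parabolic rectangles with $R^-(\gamma)\subset\{t>0\}$'' when the relevant objects for $A_{r,q}^+(\mathbb{R})$ are one-dimensional intervals, but the substance of the claim is right.
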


\noindent\textbf{Acknowledgements}\quad
After this article was finished, we found that Cruz-Uribe and
Myyryl\"ainen \cite{cm24} simultaneously also introduced the off-diagonal
parabolic Muckenhoupt class with time lag and studied
the two-weighted boundedness of the centered parabolic fractional maximal
operator with time lag. Except the definition and its
two basic properties of the off-diagonal parabolic
Muckenhoupt class with time lag,
both articles have no substantial overlap.

Weiyi Kong and Chenfeng Zhu would like to thank
Professor Sibei Yang for some helpful discussions and
suggestions on Section~\ref{section6} of this article.

\bigskip

\noindent Weiyi Kong, Dachun
Yang (Corresponding author) and Wen Yuan

\medskip

\noindent Laboratory of Mathematics
and Complex Systems (Ministry of Education of China),
School of Mathematical Sciences,
Beijing Normal University,
Beijing 100875, The People's Republic of China

\smallskip

\noindent{\it E-mails:} \texttt{weiyikong@mail.bnu.edu.cn} (W. Kong)

\noindent\phantom{{\it E-mails:} }\texttt{dcyang@bnu.edu.cn} (D. Yang)

\noindent\phantom{{\it E-mails:} }\texttt{wenyuan@bnu.edu.cn} (W. Yuan)

\bigskip

\noindent Chenfeng Zhu

\medskip

\noindent School of Mathematical Sciences, Zhejiang
University of Technology, Hangzhou 310023, The People's Republic of China

\smallskip

\noindent{\it E-mails:} \texttt{chenfengzhu@zjut.edu.cn}

\end{document}